\theoremstyle{plain}
\newtheorem{theorem}[subsection]{Theorem}
\newtheorem{proposition}[subsection]{Proposition}
\newtheorem{lemma}[subsection]{Lemma}
\newtheorem{corollary}[subsection]{Corollary}
\theoremstyle{definition}
\newtheorem{definition}[subsection]{Definition}
\theoremstyle{remark}
\newtheorem{remark}[subsection]{Remark}
\numberwithin{equation}{section}
\newcommand{\tenscorep}{\mathbin{\begin{tikzpicture}[baseline,x=.75ex,y=.75ex] \draw (-0.8,1.15)--(0.8,1.15);\draw(0,-0.25)--(0,1.15); \draw (0,0.75) circle [radius = 1];\end{tikzpicture}}}
\title{Braided Quantum Symmetries of Graph \texorpdfstring{$\textup{C}^*$}{}-algebras}
\author[Bhattacharjee]{Suvrajit Bhattacharjee}
\address{Matematisk institutt, Universitetet i Oslo, P.O. Box 1053, Blindern, 0316 Oslo, Norway}
\email{suvrajib@math.uio.no}
\author[Joardar]{Soumalya Joardar}
\address{Department of Mathematics and Statistics, Indian Institute of Science Education and Research Kolkata, Mohanpur - 741246, West
Bengal, India}
\email{soumalya.j@gmail.com}
\author[Roy]{Sutanu Roy}
\address{School of Mathematical Sciences, National Institute of Science Education and Research Bhubaneswar, Jatni, 752050, India}
\address{Homi Bhabha National Institute, Training School Complex, Anushaktinagar, Mumbai, 400094, India}
\email{sutanu@niser.ac.in}
\begin{document}

\begin{abstract}
We prove the existence of a universal braided compact quantum group acting on a graph $\textup{C}^*$\nobreakdash-algebra in the category of $\mathbb{T}$\nobreakdash-$\textup{C}^*$\nobreakdash-algebras with a twisted monoidal structure, in the spirit of the seminal work of S. Wang. To achieve this, we construct a braided analogue of the free unitary quantum group and study its bosonization. As a concrete example, we compute this universal braided compact quantum group for the Cuntz algebra.     
\end{abstract}

\subjclass{81R50; 46L89.}

\keywords{quantum symmetries; braided compact quantum groups;  bosonization; graph $\textup{C}^*$\nobreakdash-algebras.}

\maketitle

\section{Introduction}

Since the work of Manin in \cite{manin}, the concept of quantum symmetries of a space, in both classical and noncommutative sense, has been (and still being) thoroughly investigated and clarified in a number of works. It is shown in \cite{manin} that $\textup{SL}_q(2,\mathbb{C})$, one of the very first examples of a quantum group (\cite{drinfeld}) in Drinfeld's sense, i.e., a noncommutative and noncocommutative Hopf algebra, appears as a quantum symmetry group of a noncommutative space. Woronowicz's theory of compact matrix pseudogroups (\cite{woropseudo}), describing a compact topological group (of matrices) in a purely $\textup{C}^*$\nobreakdash-algebraic language, marked a new beginning which is also better suited to Connes' approach to noncommutative geometry (\cite{connes}). 

A landmark example (\cite{worosu}) constructed by Woronowicz is the compact quantum group $\textup{SU}_q(2)$ for each $0 < q \leq 1$. Thus for each $q \in (0,1]$, one has a unital $\textup{C}^*$\nobreakdash-algebra $\textup{C}(\textup{SU}_q(2))$ and a unital $*$-homomorphism $\Delta_{\textup{SU}_q(2)}$ from $\textup{C}(\textup{SU}_q(2))$ to $\textup{C}(\textup{SU}_q(2)) \otimes \textup{C}(\textup{SU}_q(2))$ satisfying some natural conditions. Setting $q=1$, one recovers the algebra of continuous functions on the compact group $\textup{SU}(2)$ and the morphism induced by the group-multiplication.  The discovery of $\textup{SU}_q(2)$ together with the dream of making contact with Connes' enterprise, resulted, following Wang's pioneering work on quantum symmetries of finite spaces (\cite{wang}), in several thematically entwined constructions and insights. We mention, as a necessarily incomplete sampling,

\begin{enumerate}
    \item Banica, Bichon and collaborators on quantum symmetries of discrete structures, see \cites{banicagraphs,banicametric,bichongraph};
    \item Goswami, Bhowmick and collaborators on quantum isometries of spectral triples, see \cites{banicagoswami,goswamiadv,GJ2018,bg2019,BG2009a,bbg};
    \item Banica, Skalski and collaborators on quantum symmetries of $\textup{C}^*$\nobreakdash-algebras equipped with orthogonal filtrations, see \cites{bs2013,bmrs2019,tdec};
    \item and more recently, Goswami and collaborators on quantum symmetries of subfactors, see \cites{bcg2022,HG2021}. 
\end{enumerate}

The deformation parameter $q$ in the definition of $\textup{C}(\textup{SU}_q(2))$ can be relaxed so as to require it to be a nonzero real number $q \in \mathbb{R}^{\times}$. Letting $q$ to be any nonzero complex number $q \in \mathbb{C} \setminus \mathbb{R}$, other than the reals, unlocks a plethora of interesting phenomena; for starter, the comultiplication $\Delta_{\textup{SU}_q(2)}$ does not take values in the minimal tensor product $\textup{C}(\textup{SU}_q(2)) \otimes \textup{C}(\textup{SU}_q(2))$ anymore. What the definition of a compact quantum group misses in this case is a fine structure of the underlying $\textup{C}^*$\nobreakdash-algebra $\textup{C}(\textup{SU}_q(2))$; it is the hidden $\mathbb{T}$\nobreakdash-structure on $\textup{C}(\textup{SU}_q(2))$, plus a twisting by the nontrivial bicharacter on $\mathbb{Z}$ governed by the unit complex number $\zeta=q/\overline{q}$, which is $1$, of course, when $q$ is real. It turns out that the receptacle of the morphism $\Delta_{\textup{SU}_q(2)}$ in this case is the braided tensor product for the $\mathbb{T}$\nobreakdash-structure mentioned just above, in the sense of \cites{mrw2014,mrw2016,R2022}, which, as expected, becomes the minimal tensor product when $q$ is real.

Braided tensor product of two $\textup{C}^*$\nobreakdash-algebras were systematically studied by Meyer, the third author and Woronowicz in \cites{mrw2014,mrw2016} (though there are predecessors, for instance \cite{vaes}) to accommodate several existing tensor products and crossed product-like constructions. For a quasitriangular quantum group $G$, which is a suitable generalization of the algebraic case, this braided tensor product is then used to introduce a monoidal structure $\boxtimes$ on the category of $G$\nobreakdash-$\textup{C}^*$\nobreakdash-algebras. With this in hand, a braided compact quantum group is then defined (\cite{mrw2016}) exactly as an ordinary compact quantum group, the only difference being that the minimal tensor product $\otimes$ is replaced by the braided tensor product $\boxtimes$. $\textup{SU}_q(2)$ for a complex deformation parameter $q$ is a braided compact quantum group (\cite{kmrw2016}) in this sense with the quasitriangular quantum  group $G$ being the circle group $\mathbb{T}$ and so is the braided free orthogonal quantum group, constructed in \cite{MR2021}. 

The next natural step, that of viewing these braided compact quantum groups as symmetry objects of suitable spaces, is explored by the third author in \cite{R2021} for finite spaces, obtaining further examples of braided compact quantum groups (see also \cite{soltan}). Our aim in this paper is to continue along this line. The class of (noncommutative) spaces considered in this paper is that of graph $\textup{C}^*$\nobreakdash-algebras. The relatively recent study of quantum symmetries of this well-studied class of $\textup{C}^*$\nobreakdash-algebras were taken up by the second author and Mandal in \cites{JM2018,JM2021}. A graph $\textup{C}^*$\nobreakdash-algebra carries a natural generalized gauge action and a canonical equivariant state which is $\mathrm{KMS}$ (see, for instance \cite{br}), under some mild restrictions on the underlying graph. These two structures are crucial to our existence result, which is otherwise an extension of the one obtained in \cite{JM2021}. However, there are several technical and conceptual difficulties that are to be overcome. We defer introducing the relevant notations and definitions, but state the main result now.

\begin{theorem}
Let $E=(E^0,E^1,r,s)$ be a finite, directed graph without sinks such that the $\mathrm{KMS}$ state exists. Then there is a universal braided compact quantum group $\mathrm{Qaut}(\textup{C}^*(E))$ acting linearly, faithfully on $\textup{C}^*(E)$ and preserving the $\mathrm{KMS}$ state.
\end{theorem}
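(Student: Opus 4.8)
The plan is to transplant Wang's construction of the universal quantum symmetry group into the braided category of $\mathbb{T}$\nobreakdash-$\textup{C}^*$\nobreakdash-algebras, using the braided analogue of the free unitary quantum group as the ``large'' object and its bosonization to import Woronowicz's axioms from the ordinary theory. First I would fix the category $\mathcal{C}$ whose objects are pairs $(\mathbb{G},\alpha)$, with $\mathbb{G}$ a braided compact quantum group over $\mathbb{T}$ and $\alpha\colon\textup{C}^*(E)\to\textup{C}^*(E)\boxtimes\textup{C}(\mathbb{G})$ a $\mathbb{T}$\nobreakdash-equivariant, injective action that is linear---it preserves both $\operatorname{span}\{p_v:v\in E^0\}$ and $\operatorname{span}\{S_e:e\in E^1\}$---and preserves the $\mathrm{KMS}$ state $\tau$, i.e.\ $(\tau\otimes\operatorname{id})\circ\alpha=\tau(\cdot)\,1$; morphisms are the evident intertwiners. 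The gauge action of $\mathbb{T}$ witnesses $\mathcal{C}\neq\varnothing$.

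The second step is to read off such an object as corepresentation data. Linearity, equivariance and the Cuntz--Krieger relations force $\alpha(S_e)=\sum_{f\in E^1}S_f\boxtimes u_{fe}$ for a matrix $u=(u_{fe})$ over $\textup{C}(\mathbb{G})$ whose entries are homogeneous of the prescribed $\mathbb{T}$\nobreakdash-degree and whose support respects the partition of $E^1$ by source and range, and they determine $\alpha$ on the $p_v$ from $u$; moreover $\mathrm{KMS}$\nobreakdash-invariance makes $u$ a $Q$\nobreakdash-unitary, where $Q$ is the positive matrix encoding the modular data of $\tau$ on $\operatorname{span}\{S_e:e\in E^1\}$. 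These are precisely the relations defining the braided analogue $\mathbb{A}_u(Q)$ of Wang's free unitary quantum group, refined by the graph combinatorics, so the universal $\textup{C}^*$\nobreakdash-algebra $\textup{C}(\mathbb{G}_E)$ carrying a representation of them exists as a quotient of $\textup{C}(\mathbb{A}_u(Q))$---and this is exactly where the hypotheses are used: finiteness of $E$ keeps the relevant spaces finite-dimensional, and $Q$\nobreakdash-unitarity, available because $\tau$ exists, supplies the norm bound on the generators in Wang's manner. The comultiplication $\Delta_{\mathbb{G}_E}(u_{fe})=\sum_{g}u_{fg}\boxtimes u_{ge}$, the counit and the antipode are inherited from $\mathbb{A}_u(Q)$; that $\mathbb{G}_E$ is a \emph{braided compact quantum group}---in particular the existence of its Haar state and the braided cancellation conditions---is checked through the bosonization $\mathbb{G}_E\rtimes\mathbb{T}$, which is an ordinary compact quantum group, via the equivalence between braided $\mathbb{T}$\nobreakdash-actions and ordinary actions of the bosonization that restrict to the gauge action.

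Granting the above, I would set $\mathrm{Qaut}(\textup{C}^*(E))\coloneqq\mathbb{G}_E$---or, should faithfulness require it, the quantum subgroup generated by the matrix coefficients of the tautological action, which leaves the universal property intact. It then remains to check: (i) that $\alpha(S_e)=\sum_fS_f\boxtimes u_{fe}$, together with the induced formula on the $p_v$, extends to a $\textup{C}^*$\nobreakdash-homomorphism on all of $\textup{C}^*(E)$---this follows from the universal property of the graph $\textup{C}^*$\nobreakdash-algebra once one verifies that the images of the $S_e$ and $p_v$ satisfy the Cuntz--Krieger relations inside $\textup{C}^*(E)\boxtimes\textup{C}(\mathbb{G}_E)$, a computation using the support and $Q$\nobreakdash-unitarity relations on $u$ and the absence of sinks; (ii) the coaction identity $(\operatorname{id}\boxtimes\Delta_{\mathbb{G}_E})\circ\alpha=(\alpha\boxtimes\operatorname{id})\circ\alpha$, which holds on the generators by the shape of $\Delta_{\mathbb{G}_E}$ and hence everywhere; (iii) linearity and $\mathrm{KMS}$\nobreakdash-preservation, which are built in. Universality is then immediate: any $(\mathbb{G}',\alpha')\in\mathcal{C}$ gives a corepresentation satisfying the same relations, so the universal property of $\mathbb{A}_u(Q)$ yields a morphism $\textup{C}(\mathbb{G}_E)\to\textup{C}(\mathbb{G}')$ of braided compact quantum groups intertwining $\alpha$ and $\alpha'$, unique because it is determined on the generators.

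The main obstacle, I expect, is concentrated in the second step and is of two kinds. The first is braided bookkeeping: one must ensure that the relations cut out from $\textup{C}(\mathbb{A}_u(Q))$ generate a $\mathbb{T}$\nobreakdash-invariant ideal which is also a coideal, so that $\Delta$ and the counit descend and the quotient remains nondegenerate, and that every occurrence of the braiding in $\boxtimes$ is inserted correctly for the coaction identity and the comultiplication to be well defined. The second, and heavier, difficulty is upgrading this quotient from a $\mathbb{T}$\nobreakdash-$\textup{C}^*$\nobreakdash-bialgebra to a genuine braided compact quantum group: this is precisely the purpose of the analysis of the bosonization of the braided free unitary quantum group, and the crux is to realise $\mathbb{G}_E$---or its faithful part---as a quantum subgroup of a bosonization and to transport Woronowicz's axioms, namely the Haar state, the Podle\'s-type density of $\alpha(\textup{C}^*(E))(1\boxtimes\textup{C}(\mathbb{G}_E))$, and so on, across the equivalence, checking at each stage that $\mathbb{T}$\nobreakdash-equivariance is not lost.
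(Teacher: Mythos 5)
Your overall strategy coincides with the paper's: fix the category of linear, faithful, $\tau_E$-preserving actions of braided compact quantum groups, extract from such an action a matrix of coefficients satisfying the defining relations of the braided free unitary quantum group, realize the universal object as a Wang-type quotient of $\textup{C}(\textup{U}^+_\zeta(F^{-1}))$ (with $F^*F=\tilde F$, $\tilde F_{ij}=\tau_E(S_i^*S_j)$ after normalizing the $S_i$), obtain the comultiplication by universality, and lean on the bosonization to import facts from the ordinary theory. One step of your sketch, however, is asserted rather than proved, and it is exactly where the paper has to work: the claim that ``$\mathrm{KMS}$-invariance makes $u$ a $Q$-unitary.'' Applying $\tau_E$-preservation to $S_iS_j^*$ and to $S_i^*S_j$ yields only the one-sided relations $(\overline q_\zeta)^*\,\overline q_\zeta=I_n$ and $(FqF^{-1})^*(FqF^{-1})=I_n$ (Proposition \ref{matricial}); in the braided setting the missing halves do not follow formally from these. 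The paper obtains them (Proposition \ref{quotient}) by pushing the isometry relations into the bosonization $G\rtimes\mathbb{T}$ of the \emph{acting} quantum group, where ordinary compact quantum group theory upgrades them to unitarity; only then does the universal property of $\textup{U}^+_\zeta$ apply to an arbitrary object of the category. Your proposal deploys bosonization only later, for the universal object itself (Haar state, cancellation), so as written the surjection $\textup{C}(\textup{U}^+_\zeta(F^{-1}))\to\textup{C}(\mathbb G')$ that drives universality is not justified — though the gap is repairable with the very tools you list.

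Two further points where the paper's route is lighter than what you anticipate. A braided compact quantum group in this $\textup{C}^*$-framework (Definition \ref{bcqg}) requires only a $\mathbb{T}$-equivariant, coassociative, bisimplifiable comultiplication — no counit, antipode or Haar state has to be produced — and instead of verifying that the defining relations generate a $\mathbb{T}$-invariant coideal, the paper obtains $\Delta_{\mathcal U}$ for free from the initial-object property of the quotient $\mathcal U$ in the auxiliary category $\mathcal{C}'(E)$ (Definition \ref{anothercat}), since $\mathcal U\boxtimes_\zeta\mathcal U$ is itself an object; coassociativity, bisimplifiability and the Podle\'s condition are then checked by the same computations as for $\textup{U}^+_\zeta(F)$ (Proposition \ref{coproduct}). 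Finally, note that the paper's faithfulness (Definition \ref{faithful}) is density of the $*$-algebra generated by the slices of the action, not injectivity of $\alpha$; for the universal object it holds by construction, as you anticipated when allowing passage to the subalgebra generated by the matrix coefficients.
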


In the ordinary compact quantum group case, the path followed in \cite{JM2021} to reach this result uses the existence of the free unitary quantum group $\textup{U}^+(F)$, $(F \in \textup{GL}(n,\mathbb{C}))$, a braided version of which is missing hitherto. We recall that the free unitary quantum group is defined by only demanding the fundamental representation to be unitary and its conjugate to be equivalent to a unitary representation. The first obstacle in defining a braided analogue of $\textup{U}^+(F)$ is the definition of the conjugate representation. We achieved this in a previous version of this article. However, soon afterwards, a general framework was provided in \cite{cocycle-twist} for constructing this braided compact quantum group along with several others (for instance \cite{anyonic}). Finally, we explicitly compute $\mathrm{Qaut}(\textup{C}^*(E))$ when $\textup{C}^*(E)$ is the Cuntz algebra $\mathcal{O}_n$; in this case $\mathrm{Qaut}(\textup{C}^*(E))$ turns out to be $\textup{U}_{\zeta}^+(I_n)$.

\begin{theorem}
The braided quantum symmetry group for the Cuntz algebra $\mathcal{O}_n$ is the free unitary quantum group $\textup{U}_{\zeta}^+(I_n)$, $I_n$ being the $n \times n$ identity matrix.    
\end{theorem}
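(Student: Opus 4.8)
The plan is to realise $\mathcal{O}_n$ as a graph $\textup{C}^*$\nobreakdash-algebra, apply the main existence theorem to obtain $\mathrm{Qaut}(\textup{C}^*(E))$, and then pin it down by showing that it and $\textup{U}_\zeta^+(I_n)$ each satisfy the universal property characterising the other. To begin, $\mathcal{O}_n = \textup{C}^*(E)$, where $E$ is the graph with a single vertex $v$ and $n$ edges, each a loop at $v$; this graph is finite, has no sinks, and its $\mathrm{KMS}$ state is the canonical gauge\nobreakdash-invariant state $\phi$ with $\phi(S_\mu S_\nu^*) = \delta_{\mu,\nu}\,n^{-|\mu|}$ on the Cuntz--Krieger monomials, so the hypotheses of the main theorem hold and $\mathrm{Qaut}(\mathcal{O}_n)$ exists. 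Since $v$ is the only vertex, $\alpha(1) = 1$, and the linear, faithful, $\phi$\nobreakdash-preserving coaction $\alpha$ of $\mathrm{Qaut}(\mathcal{O}_n)$ is governed by its restriction to the $n$\nobreakdash-dimensional edge space: $\alpha(S_i) = \sum_j S_j \boxtimes u_{ji}$ for a matrix $u = (u_{ij})$ over $\textup{C}(\mathrm{Qaut}(\mathcal{O}_n))$, the entries of which generate this $\textup{C}^*$\nobreakdash-algebra by faithfulness.

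The first half of the argument reads off the relations among the $u_{ij}$. As $\alpha$ is a $*$\nobreakdash-homomorphism, applying it to $S_i^*S_j = \delta_{ij}1$ and to $\sum_i S_i S_i^* = 1$ and expanding inside the braided tensor product $\mathcal{O}_n \boxtimes \textup{C}(\mathrm{Qaut}(\mathcal{O}_n))$ --- so that the braiding of the twisted monoidal structure contributes the prescribed powers of $\zeta$ --- forces $u$ to be a braided unitary. Applying $\phi$\nobreakdash-invariance, $(\phi \boxtimes \mathrm{id})\alpha(x) = \phi(x)1$, to $x = S_i S_j^*$ and, if needed, to the length\nobreakdash-two monomials $S_i S_j S_k^* S_l^*$, together with $\phi(S_i S_j^*) = \delta_{ij}n^{-1}$, forces the conjugate representation $\bar u$ in the sense of Definition~\ref{conj} to be a braided unitary as well. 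These are precisely the defining relations of $\textup{U}_\zeta^+(I_n)$, so its universal property yields a surjective unital $*$\nobreakdash-homomorphism $\pi\colon \textup{C}(\textup{U}_\zeta^+(I_n)) \to \textup{C}(\mathrm{Qaut}(\mathcal{O}_n))$ intertwining the comultiplications and sending the fundamental representation to $u$.

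For the reverse map I would exhibit $\textup{U}_\zeta^+(I_n)$ itself acting linearly, faithfully, and $\phi$\nobreakdash-preservingly on $\mathcal{O}_n$. Define $\alpha_0 \colon \mathcal{O}_n \to \mathcal{O}_n \boxtimes \textup{C}(\textup{U}_\zeta^+(I_n))$ on generators by $\alpha_0(S_i) = \sum_j S_j \boxtimes u_{ji}$, with $u$ now the fundamental representation of $\textup{U}_\zeta^+(I_n)$. Running the previous computation backwards, braided unitarity of $u$ and of $\bar u$ shows that $\alpha_0$ respects the Cuntz--Krieger relations, hence extends to a unital $*$\nobreakdash-homomorphism; the braided bialgebra structure of $\textup{U}_\zeta^+(I_n)$ gives coassociativity and the Podle\'s density condition, its defining relations give $\phi$\nobreakdash-invariance, and faithfulness is immediate since the $u_{ij}$ both generate $\textup{C}(\textup{U}_\zeta^+(I_n))$ and appear among the matrix coefficients of $\alpha_0$. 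The universal property of $\mathrm{Qaut}(\mathcal{O}_n)$ then produces a surjective unital $*$\nobreakdash-homomorphism $\rho\colon \textup{C}(\mathrm{Qaut}(\mathcal{O}_n)) \to \textup{C}(\textup{U}_\zeta^+(I_n))$ intertwining the comultiplications and sending $u$ to the fundamental representation. Since $\pi$ and $\rho$ are mutually inverse on the generating fundamental representations, they are mutually inverse isomorphisms, whence $\mathrm{Qaut}(\mathcal{O}_n) \cong \textup{U}_\zeta^+(I_n)$; at $\zeta = 1$ this recovers the computation of \cite{JM2021}.

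I expect the main obstacle to lie in the two braided bookkeeping steps above: one must verify that, once the braiding of the twisted monoidal category has been taken into account, the constraints imposed by the Cuntz--Krieger relations and by $\phi$\nobreakdash-invariance amount to \emph{exactly} braided unitarity of $u$ together with braided unitarity of $\bar u$ --- neither more nor fewer relations --- and, dually, that the formula for $\alpha_0$ really does define a coaction of the braided compact quantum group $\textup{U}_\zeta^+(I_n)$ and not merely a $\zeta$\nobreakdash-twisted relative of one. Should a direct verification become unwieldy, an alternative is to pass to bosonizations: replace the two braided compact quantum groups by $\mathrm{Qaut}(\mathcal{O}_n) \rtimes \mathbb{T}$ and $\textup{U}_\zeta^+(I_n) \rtimes \mathbb{T}$ --- the latter having $\textup{C}^*$\nobreakdash-algebra $\textup{C}(\textup{U}_\zeta^+(I_n)) \rtimes \mathbb{Z}$ by the bosonization theorem above --- carry out the identification for ordinary compact quantum groups there, and transport the resulting isomorphism back through the equivalence of \cite{mrw2016}.
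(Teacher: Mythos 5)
Your overall strategy is the same as the paper's: show that $\textup{U}_{\zeta}^+(n)$ with the natural linear coaction is an object of $\mathcal{C}(E_{\mathcal{O}_n},\tau_n)$, and show that the coefficient matrix of any object's action satisfies the defining relations of $\textup{C}(\textup{U}_{\zeta}^+(I_n))$, whence terminality. However, two of your steps are genuinely under-argued, and they are precisely the places where the paper has to work. First, the claim that $\tau_n$-invariance (even supplemented by length-two monomials $S_iS_jS_k^*S_l^*$) forces $\overline{u}_{\zeta}$ to be unitary is not correct as stated: slicing $\eta(S_iS_j^*)$ with $\tau_n$ yields only the isometry half $(\overline{u}_{\zeta})^*\overline{u}_{\zeta}=I_n$ (and the Cuntz--Krieger relations give unitarity of $u$), but nothing in these computations produces $\overline{u}_{\zeta}(\overline{u}_{\zeta})^*=I_n$; the twisted sum $\sum_k\zeta^{d_i(d_k-d_i)-d_j(d_k-d_j)}u^*_{ik}u_{jk}$ is not reachable from degree-$\leq 2$ monomials. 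The paper closes this by passing to the bosonization $G\rtimes\mathbb{T}$, an honest compact quantum group, and invoking corepresentation theory there to upgrade the two isometry relations to unitarity (Proposition \ref{quotient}); your fallback suggestion of ``doing the whole identification at the bosonization level'' is in the right spirit but is not routine either, since no universal property for the bosonized action is set up (the paper explicitly refrains from this in its final Remark) --- the targeted use of the bosonization only to upgrade isometries is what is actually needed.

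Second, in the converse direction you dispose of $\tau_n$-invariance of the $\textup{U}_{\zeta}^+(n)$-action with ``its defining relations give $\phi$-invariance.'' Invariance must be verified on all monomials $S_\alpha S_\beta^*$, and it does not propagate formally from the generators: the state is not multiplicative, and the braiding inserts degree-dependent powers of $\zeta$ at each commutation. This is the technical heart of the paper's argument (Proposition \ref{objectproof}): an induction on path length using the scaling identity $\tau_n(S_\alpha x S_\beta^*)=\delta_{\alpha\beta}n^{-|\alpha|}\tau_n(x)$ of Lemma \ref{cuntzkms} together with the observation that $\tau_n$ kills elements of nonzero homogeneous degree, which is what makes the accumulated $\zeta$-phases disappear. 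Without an argument of this kind (or an appeal to an equivariant conditional expectation playing the same role), your step ``the defining relations give invariance'' is a gap; once these two points are supplied, the rest of your outline --- mutually inverse morphisms obtained from the two universal properties --- does yield the theorem and agrees with the paper's proof.
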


The demonstration of the above theorem is naturally divided into two parts; the first part shows that $\textup{U}_{\zeta}^+(I_n)$ is indeed a candidate to be the braided quantum symmetry group (in technical terms it is an object of a suitable category) and the second part shows its universality. We remark that the proof of the first part (Proposition \ref{objectproof}) is an improvement over the proof as given  in \cites{JM2018,JM2021} in the unbraided case. 

Let us now briefly describe the organization of this paper. In Section \ref{freeunitary}, we collect together all the results pertaining to the braided free unitary quantum group. In Section \ref{graph}, we prove the main existence result on the braided quantum symmetry group of the graph $\textup{C}^*$\nobreakdash-algebra $\textup{C}^*(E)$ (Theorem \ref{graphsymm}) and in Section \ref{sec:cuntz-algebra}, we compute it for the Cuntz algebra $\mathcal{O}_n$ (Theorem \ref{cuntzsymm}). 

\section*{Acknowledgments} The first author was supported by the NFR project 300837 ``Quantum Symmetry'' and the Charles University PRIMUS grant \textit{Spectral Noncommutative Geometry of Quantum Flag Manifolds} PRIMUS/21/SCI/026. He is grateful to Indian Statistical Institute, Kolkata and Prof. Debashish Goswami for offering him a visiting scientist position, where this work was started. He is also grateful to SERB, Government of India (PDF/2021/003544) and Indian Institute of Science Education and Research, Kolkata for hosting him, where this work was written up. The second author was partially supported by SERB MATRICS grant MTR/2022/000515. He was also partially supported by INSPIRE faculty award DST/INSPIRE/04/2016/002469 given by the D.S.T., Government of India, when this work was started. The third author was partially supported by DST/INSPIRE/04/2016/000215, given by the D.S.T., Government of India.

\section*{Notations} For two $\textup{C}^*$\nobreakdash-algebras $A$ and $B$, $A \otimes B$ denotes the minimal tensor product of $\textup{C}^*$\nobreakdash-algebras. For a $\textup{C}^*$\nobreakdash-algebra $A$ and two closed subspaces $X,Y \subset A$, $XY$ denotes the norm-closed linear span of the set of products $xy$, $x \in X$ and $y \in Y$. For an object $X$ in some category, $\mathrm{id}_X$ denotes the identity morphism of $X$. For a unital $\textup{C}^*$\nobreakdash-algebra $A$, $1_A$ denotes the unit element in $A$, and $\mathcal{M}(A)$ denotes the multiplier algebra of $A$.

\section{The braided free unitary quantum group}\label{freeunitary} In this section, we gather some preliminaries regarding braided compact quantum groups over $\mathbb{T}$ (see \cites{mrw2014,mrw2016,kmrw2016}), presented in a way that naturally leads us to the definition of the braided free unitary quantum group. We omit the proofs as they are contained in a much more general framework in \cite{cocycle-twist}. Let $\mathcal{C}^*$ be the category of $\textup{C}^*$\nobreakdash-algebras. For $A$ and $B$ in $\mathrm{Obj}(\mathcal{C}^*)$, we write the set of morphisms as $\mathrm{Mor}(A,B)$ which consists of $*$-homomorphisms $\pi : A \rightarrow \mathcal{M}(B)$ such that $\pi(A)B=B$, where $\mathcal{M}(B)$ is the multiplier algebra of $B$. Thus for unital $A$ and $B$, $\mathrm{Mor}(A,B)$ consists of unital $*$-homomorphisms from $A$ to $B$. We recall that a compact quantum group $G$ consists of a pair $G=(\textup{C}(G),\Delta_{G})$ where $\textup{C}(G)$ is a unital $\textup{C}^*$\nobreakdash-algebra and $\Delta_{G} : \textup{C}(G) \rightarrow \textup{C}(G) \otimes \textup{C}(G)$ is a coassociative morphism satisfying a cancellation property. We also recall that the comultiplication $\Delta_{\mathbb{T}} : \textup{C}(\mathbb{T}) \rightarrow \textup{C}(\mathbb{T}) \otimes \textup{C}(\mathbb{T})$ of the compact quantum group $\textup{C}(\mathbb{T})$ sends $z$ to $z \otimes z$.

\begin{definition}\label{circleaction}
We define the category $\mathcal{C}^*_{\mathbb{T}}$ of $\mathbb{T}$\nobreakdash-$\textup{C}^*$\nobreakdash-algebras and $\mathbb{T}$\nobreakdash-equivariant morphisms as follows. An object of $\mathcal{C}^*_{\mathbb{T}}$ is a pair $(X,\rho^X)$, where $X$ is a unital $\textup{C}^*$\nobreakdash-algebra and $\rho^X \in \mathrm{Mor}(X, X \otimes \textup{C}(\mathbb{T}))$ such that
\begin{enumerate}
\item $(\rho^X \otimes \mathrm{id}_{\textup{C}(\mathbb{T})})\circ \rho^X=(\mathrm{id_X} \otimes \Delta_{\mathbb{T}})\circ \rho^X$;
\item $\rho^X(X)(1_X \otimes \textup{C}(\mathbb{T}))=X \otimes \textup{C}(\mathbb{T})$.
\end{enumerate}
Let $(X,\rho^X)$ and $(Y,\rho^Y)$ be two $\mathbb{T}$\nobreakdash-$\textup{C}^*$\nobreakdash-algebras. A morphism $\phi : (X,\rho^X) \rightarrow (Y,\rho^Y)$ in $\mathcal{C}^*_\mathbb{T}$ (or equivalently, a $\mathbb{T}$\nobreakdash-equivariant morphism) is by definition a $\phi \in \mathrm{Mor}(X,Y)$ such that $\rho^Y \circ \phi=(\phi \otimes \mathrm{id}_{\textup{C}(\mathbb{T})}) \circ \rho^X$. We write $\mathrm{Mor}^{\mathbb{T}}(X,Y)$ for the set of morphisms between $(X,\rho^X)$ and $(Y,\rho^Y)$ in $\mathcal{C}^*_{\mathbb{T}}$.
\end{definition}

A $\mathbb{T}$\nobreakdash-$\textup{C}^*$\nobreakdash-algebra $X$ (with $\rho^X$ understood) comes with an associated $\mathbb{Z}$-grading defined as follows. We call an element $x \in X$ homogeneous of degree $n \in \mathbb{Z}$ if $\rho^X(x)=x \otimes z^n$ and write $\mathrm{deg}(x)=n$. For each $n \in \mathbb{Z}$, we let $X(n)$ denote the set consisting of homogeneous elements of degree $n$: $X(n)=\{x \in X \mid \mathrm{deg}(x)=n\}$. The collection $\{X(n)\}_{n \in \mathbb{Z}}$ enjoys the following: 
\begin{enumerate}
\item for each $n \in \mathbb{Z}$, $X(n)$ is a closed subspace of $X$;
\item for $m,n \in \mathbb{Z}$, $X(m)X(n) \subseteq X(m+n)$;
\item for each $n \in \mathbb{Z}$, $X(n)^*=X(-n)$;
\item the algebraic direct sum $\bigoplus_{n \in \mathbb{Z}}X(n)$ is norm-dense $X$.
\end{enumerate}

Fixing $\zeta \in \mathbb{T}$ amounts to fixing an $\mathrm{R}$\nobreakdash-matrix on~\(\mathbb{Z}\), in the sense of \cite{mrw2016}*{Defintion 2.1}. Consequently, $\mathcal{C}^*_{\mathbb{T}}$ gets endowed with a monoidal structure or the braided tensor product~$\boxtimes_{\zeta}$ depending on a parameter $\zeta \in \mathbb{T}$. Let $(X,\rho^X)$ and $(Y,\rho^Y)$ be two objects of $\mathcal{C}^*_{\mathbb{T}}$. Then the braided tensor~\(X\boxtimes_{\zeta}Y\) along with a diagonal action~\(\rho^{X\boxtimes_{\zeta}Y}\) of~\(\mathbb{T}\) becomes an object of $\mathcal{C}^*_{\mathbb{T}}$~\cite{mrw2016}*{Proposition 4.2}. Consider the canonical embeddings~\(j_{1}\in\mathrm{Mor}^{\mathbb{T}}(X,X\boxtimes_{\zeta}Y)\) and \(j_{2}\in\mathrm{Mor}^{\mathbb{T}}(Y,X\boxtimes_{\zeta}Y)\). We sometimes write $x \boxtimes_{\zeta}1_Y$ for $j_1(x)$ (and similarly, $1_X \boxtimes y=j_2(y)$). The pointwise noncommutativity of \(j_{1}\) and \(j_{2}\) for homogeneous $x$ and $y$ is given by
\begin{align*}
(x \boxtimes_{\zeta} 1_Y)(1_X \boxtimes_{\zeta} y)=j_1(x)j_2(y)={}&\zeta^{-\mathrm{deg}(x)\mathrm{deg}(y)}j_2(y)j_1(x)\\
={}&\zeta^{-\mathrm{deg}(x)\mathrm{deg}(y)}(1_X \boxtimes_{\zeta} y)(x \boxtimes_{\zeta} 1_Y).
\end{align*}At the algebraic level, if $x \in X(k)$ and $y \in Y(l)$ then $j_1(x)j_2(y) \in (X \boxtimes_{\zeta} Y)(k+l)$. Throughout the paper, $X \boxtimes_{\zeta} Y$ is equipped with this $\mathbb{T}$\nobreakdash-action. Suppose we are given two $\mathbb{T}$\nobreakdash-equivariant morphisms $\pi_1 \in \mathrm{Mor}^{\mathbb{T}}(X_1, Y_1)$ and $\pi_2 \in \mathrm{Mor}^{\mathbb{T}}(X_2, Y_2)$. There is a unique $\mathbb{T}$\nobreakdash-equivariant morphism $\pi_1 \boxtimes_{\zeta} \pi_2 \in \mathrm{Mor}^{\mathbb{T}}(X_1 \boxtimes_{\zeta} X_2, Y_1 \boxtimes_{\zeta} Y_2)$ such that $(\pi_1 \boxtimes_{\zeta} \pi_2)(j_1(x_1)j_2(x_2))=j_1(\pi_1(x_1))j_2(\pi_2(x_2))$, for $x_1 \in X_1$ and $x_2 \in X_2$. Now we can define a braided compact quantum group.

\begin{definition}\label{bcqg}\cite{mrw2016}
A braided compact quantum group (over $\mathbb{T}$) $G$ is a triple $G=(\textup{C}(G),\rho^{\textup{C}(G)}, \Delta_G)$, where $\textup{C}(G)$ is a unital $\textup{C}^*$\nobreakdash-algebra, $\rho^{\textup{C}(G)}$ is a $\mathbb{T}$\nobreakdash-action on $\textup{C}(G)$ so that $(\textup{C}(G),\rho^{\textup{C}(G)})$ is an object of $\mathcal{C}^*_{\mathbb{T}}$, $\Delta_{G}$ is a $\mathbb{T}$\nobreakdash-equivariant morphism $\Delta_G \in \mathrm{Mor}^{\mathbb{T}}(\textup{C}(G), \textup{C}(G) \boxtimes_{\zeta} \textup{C}(G))$ such that
\begin{enumerate}
    \item $(\Delta_G \boxtimes_{\zeta} \mathrm{id}_{\textup{C}(G)}) \circ \Delta_G=(\mathrm{id}_{\textup{C}(G)} \boxtimes_{\zeta} \Delta_G)\circ \Delta_G$ (coassociativity);
    \item $\Delta_G(\textup{C}(G))j_2(\textup{C}(G))=\Delta_G(\textup{C}(G))j_1(\textup{C}(G))=\textup{C}(G) \boxtimes_{\zeta} \textup{C}(G)$ (bisimplifiability).
\end{enumerate}
\end{definition}

Before defining a representation, we need one more notation. Let $(A, \rho^A) \in \mathrm{Obj}(\mathcal{C}^*_{\mathbb{T}})$ be a $\mathbb{T}$\nobreakdash-$\textup{C}^*$\nobreakdash-algebra. For $a \in A$ and $z \in \mathbb{T}$, we write the value of the map $\rho^A(a) \in A \otimes \textup{C}(\mathbb{T}) \cong \textup{C}(\mathbb{T},A)$ at $z$ as $\rho^A_z(a)$, i.e., $\rho^A(a)(z)=\rho^A_z(a) \in A$. Let $\mathcal{H}$ be a Hilbert space equipped with a (strongly continuous) unitary representation of $\mathbb{T}$, $U : \mathbb{T} \rightarrow \mathcal{U}(\mathcal{H})$. We define 
\[
\rho^{\mathcal{K}(\mathcal{H})} : \mathcal{K}(\mathcal{H}) \rightarrow \mathcal{K}(\mathcal{H}) \otimes \textup{C}(\mathbb{T}), \quad \rho_z^{\mathcal{K}(\mathcal{H})}(x)=U(z)xU(z)^*, \quad z \in \mathbb{T}, x \in \mathcal{K}(\mathcal{H}),
\]making $(\mathcal{K}(\mathcal{H}),\rho^{\mathcal{K}(\mathcal{H})})$ a $\mathbb{T}$\nobreakdash-$\textup{C}^*$\nobreakdash-algebra. It is shown in \cite{mrw2014}*{Corollary 5.16} that for a $\mathbb{T}$\nobreakdash-$\textup{C}^*$\nobreakdash-algebra $(A,\rho^A)$, the braided tensor product $\mathcal{K}(\mathcal{H}) \boxtimes_{\zeta} A$ can be identified with the minimal tensor product $\mathcal{K}(\mathcal{H}) \otimes A$ and we identify them henceforth.

\begin{definition}\label{rep}\cite{MR2019}
Let $G=(\textup{C}(G),\rho^{\textup{C}(G)},\Delta_G)$ be a braided compact quantum group. A (unitary) representation of $G$ is a triple $(\mathcal{H},U,u)$, where $\mathcal{H}$ is a Hilbert space, $U$ is a strongly continuous unitary representation of $\mathbb{T}$ on $\mathcal{H}$ and $u \in \mathcal{M}(\mathcal{K}(\mathcal{H}) \otimes \textup{C}(G))$ is a (unitary) element such that
\begin{enumerate}
    \item for each $z \in \mathbb{T}$, $(\rho^{\mathcal{K}(\mathcal{H})}_z \otimes  \rho^{\textup{C}(G)}_z)(u)=u$, i.e., $u$ is $\mathbb{T}$\nobreakdash-invariant;
    \item $(\mathrm{id_{\mathcal{H}}} \otimes \Delta_G)(u)=(\mathrm{id}_{\mathcal{H}} \otimes j_1)(u)(\mathrm{id}_{\mathcal{H}} \otimes j_2)(u)$.
\end{enumerate}
\end{definition}

Focussing on the case when $\mathcal{H}$ is finite dimensional, one has the following proposition. 

\begin{proposition}\label{equivrep}
Let $G=(\textup{C}(G),\rho^{\textup{C}(G)},\Delta_G)$ be a braided compact quantum group and $(\mathcal{H},U)$ be a pair consisting of a finite dimensional Hilbert space $\mathcal{H}$ and a strongly continuous unitary representation $U$ of $\mathbb{T}$ on $\mathcal{H}$. Then a matrix $u=(u_{ij})_{1 \leq i,j \leq n} \in \mathcal{M}(\mathcal{K}(\mathcal{H}) \otimes \textup{C}(G))=\mathcal{L}(\mathcal{H}) \otimes \textup{C}(G)$ is a unitary representation of $G$ if and only if 
\begin{enumerate}
    \item $\sum_{k=1}^n u^*_{ki}u_{kj}=\delta_{ij}=\sum_{k=1}^nu_{ik}u^*_{jk},$ for $1 \leq i,j \leq n$;
    \item $\rho^{\textup{C}(G)}_z(u_{ij})=z^{d_j-d_i}u_{ij}$, for $1 \leq i,j \leq n$;
    \item $\Delta_G(u_{ij})=\sum_{k=1}^nj_1(u_{ik})j_2(u_{kj})$, for $1 \leq i,j \leq n$.
\end{enumerate}
\end{proposition}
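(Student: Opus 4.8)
The plan is to unwind the three defining conditions of a unitary representation (Definition \ref{rep}) for a finite-dimensional $\mathcal{H}$ and translate them into matrix form, which is essentially the computation already sketched just before the statement. First I would fix notation. Since $U$ is a strongly continuous unitary representation of $\mathbb{T}$ on a finite-dimensional space, it decomposes into one-dimensional characters, so I may pick an orthonormal basis $(e_1,\dots,e_n)$ of $\mathcal{H}$ with $U(z)e_i=z^{d_i}e_i$ for integers $d_i$, and after permuting the basis I may assume $d_1\leq\dots\leq d_n$. Because $\mathcal{H}$ is finite-dimensional, $\mathcal{K}(\mathcal{H})=\mathcal{L}(\mathcal{H})$ is unital and $\mathcal{M}(\mathcal{K}(\mathcal{H})\otimes\textup{C}(G))=\mathcal{L}(\mathcal{H})\otimes\textup{C}(G)$, so I can write $u=\sum_{i,j}e_{ij}\otimes u_{ij}$ with $u_{ij}\in\textup{C}(G)$, and the whole argument then reduces to comparing coefficients, using linear independence of the matrix units $e_{ij}$.

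Next I would dispatch unitarity: using $e_{ji}e_{kl}=\delta_{ik}e_{jl}$ and $e_{ij}^*=e_{ji}$, a direct computation of $u^*u$ and $uu^*$ in $\mathcal{L}(\mathcal{H})\otimes\textup{C}(G)$, compared with $1=\sum_i e_{ii}\otimes 1_{\textup{C}(G)}$, shows that $u$ is unitary precisely when the scalar identities \eqref{unitary} hold, which is item (1). For $\mathbb{T}$-invariance I would first record $\rho^{\mathcal{K}(\mathcal{H})}_z(e_{ij})=U(z)e_{ij}U(z)^*=z^{d_i-d_j}e_{ij}$, so that $(\rho^{\mathcal{K}(\mathcal{H})}_z\otimes\rho^{\textup{C}(G)}_z)(u)=\sum_{i,j}e_{ij}\otimes z^{d_i-d_j}\rho^{\textup{C}(G)}_z(u_{ij})$; demanding equality with $u$ for all $z\in\mathbb{T}$ and comparing $e_{ij}$-coefficients is equivalent to $\rho^{\textup{C}(G)}_z(u_{ij})=z^{d_j-d_i}u_{ij}$, that is, item (2), which says each $u_{ij}$ is homogeneous of degree $d_j-d_i$. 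I would point out here that this homogeneity is exactly what makes $j_1(u_{ik})$ and $j_2(u_{kj})$ defined and, moreover, makes $j_1(u_{ik})j_2(u_{kj})$ homogeneous of degree $(d_k-d_i)+(d_j-d_k)=d_j-d_i$ independently of $k$, matching $\mathrm{deg}\,\Delta_G(u_{ij})$; so the right-hand side of item (3) is a well-posed sum inside a single graded component.

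Finally, for the comultiplicativity condition, since $j_1,j_2$ are morphisms I have $(\mathrm{id}_{\mathcal{H}}\otimes j_1)(u)=\sum_{i,j}e_{ij}\otimes j_1(u_{ij})$ and likewise for $j_2$, so by matrix multiplication in $\mathcal{L}(\mathcal{H})$ their product equals $\sum_{i,j}e_{ij}\otimes\sum_k j_1(u_{ik})j_2(u_{kj})$, whereas $(\mathrm{id}_{\mathcal{H}}\otimes\Delta_G)(u)=\sum_{i,j}e_{ij}\otimes\Delta_G(u_{ij})$; equating $e_{ij}$-coefficients yields item (3). Collecting the three equivalences proves the statement.

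I do not expect a genuine obstacle, as the content is bookkeeping rather than mathematics. The two points that need a little care are: (a) that $\mathbb{T}$-invariance of $u$ is equivalent to homogeneity of every $u_{ij}$ of the prescribed degree --- this uses both the explicit formula $\rho^{\mathcal{K}(\mathcal{H})}_z(e_{ij})=z^{d_i-d_j}e_{ij}$ and the linear independence of the matrix units; and (b) keeping the direction of the degree shift $d_j-d_i$ (versus $d_i-d_j$) consistent throughout, since a flipped sign would place terms in the wrong graded component and break the well-posedness of item (3).
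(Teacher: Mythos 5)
Your proposal is correct and follows essentially the same route as the paper, which proves this proposition precisely by the coefficient-comparison translation of Definition \ref{rep} carried out in the discussion immediately preceding the statement (eigenbasis for $U$, matrix units, unitarity as Eq.\eqref{unitary}, invariance as homogeneity of degree $d_j-d_i$, and the comultiplication identity). Your added remarks on the sign convention and the well-posedness of the sum in item (3) are consistent with, and implicit in, the paper's treatment.
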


In the ordinary compact quantum group case, $u=(u_{ij})_{1 \leq i,j \leq n}$ being a (unitary) representation automatically implies that the matrix conjugate $\overline{u}=(u^*_{ij})_{1 \leq i,j \leq n}$ is also (equivalent to) a (unitary) representation. However, in the braided case, this is not so.

\begin{definition}\label{conj}
Let $G=(\textup{C}(G),\rho^{\textup{C}(G)},\Delta_G)$ be a braided compact quantum group and let $(\mathcal{H},U,u)$ be a finite dimensional representation of $G$. The conjugate representation corresponding to $u$, denoted as $(\overline{\mathcal{H}},\overline{U},\overline{u}_{\zeta})$ (as opposed to the standard $\overline{u}$, to emphasize the role of $\zeta$) is defined to be the matrix $\overline{u}_{\zeta}=(\zeta^{d_i(d_j-d_i)}u^*_{ij})_{1 \leq i,j \leq n}$.
\end{definition}

Now we proceed to define the braided free unitary quantum group.

\begin{definition}\label{braidedunitary}
Let $F \in \textup{GL}(n,\mathbb{C})$ be a diagonal matrix. We define $\textup{C}(\textup{U}_{\zeta}^{+}(F))$ to be the universal unital $\textup{C}^*$\nobreakdash-algebra with generators $u_{ij}$ for $1 \leq i,j \leq n$ subject to the relations that make $u$ and $F\overline{u}_{\zeta}F^{-1}$ unitaries, where $u=(u_{ij})_{1 \leq i,j \leq n}$ and $\overline{u}_{\zeta}$ as in Definition \ref{conj}.
\end{definition}

One then has the following two propositions.

\begin{proposition}
There is a unique unital $*$-homomorphism \[\rho^{\textup{C}(\textup{U}_{\zeta}^{+}(F))} : \textup{C}(\textup{U}_{\zeta}^{+}(F)) \rightarrow \textup{C}(\mathbb{T}, \textup{C}(\textup{U}_{\zeta}^{+}(F)))\] such that $\rho_z^{\textup{C}(\textup{U}_{\zeta}^{+}(F))}(u_{ij})=z^{d_j-d_i}u_{ij}$ for $1 \leq i,j \leq n$ and $z \in \mathbb{T}$, satisfying the two conditions in Definition \textup{\ref{circleaction}}, making $(\textup{C}(\textup{U}_{\zeta}^{+}(F)),\rho^{\textup{C}(\textup{U}_{\zeta}^{+}(F))})$ a $\mathbb{T}$\nobreakdash-$\textup{C}^*$\nobreakdash-algebra.
\end{proposition}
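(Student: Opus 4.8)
The natural strategy is to build $\rho^{\textup{C}(\textup{U}_{\zeta}^{+}(F))}$ from the universal property of $\textup{C}(\textup{U}_{\zeta}^{+}(F))$ recorded in Definition \ref{braidedunitary}. Identifying $\textup{C}(\mathbb{T},\textup{C}(\textup{U}_{\zeta}^{+}(F)))$ with $\textup{C}(\textup{U}_{\zeta}^{+}(F)) \otimes \textup{C}(\mathbb{T})$ in the usual way, I would set $\widetilde{u}_{ij} := u_{ij} \otimes z^{d_j-d_i} \in \textup{C}(\textup{U}_{\zeta}^{+}(F)) \otimes \textup{C}(\mathbb{T})$ and show that the matrices $\widetilde{u} = (\widetilde{u}_{ij})$ and $F \overline{\widetilde{u}}_{\zeta} F^{-1}$ are unitary. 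Granting this, the universal property supplies a unique unital $*$-homomorphism $\rho^{\textup{C}(\textup{U}_{\zeta}^{+}(F))}$ sending $u_{ij}$ to $\widetilde{u}_{ij}$, which under the above identification is exactly the map with $\rho_z^{\textup{C}(\textup{U}_{\zeta}^{+}(F))}(u_{ij}) = z^{d_j-d_i}u_{ij}$; uniqueness of such a $*$-homomorphism is immediate since the $u_{ij}$ generate.

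For the two unitarity conditions I would first isolate the elementary observation that if $(a_{ij})$ is a unitary matrix over a unital $\textup{C}^*$-algebra $A$ and $m_1,\dots,m_n \in \mathbb{Z}$, then $(a_{ij} \otimes z^{m_j-m_i})$ is unitary over $A \otimes \textup{C}(\mathbb{T})$: indeed $\sum_k (a_{ki}\otimes z^{m_i-m_k})^*(a_{kj}\otimes z^{m_j-m_k}) = \big(\sum_k a_{ki}^*a_{kj}\big)\otimes z^{m_j-m_i} = \delta_{ij}(1_A\otimes 1)$, and symmetrically for the other identity, the exponents telescoping because they arise from an ``inner grading''. Applied to $u = (u_{ij})$ with $m_i = d_i$ this gives unitarity of $\widetilde{u}$. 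For $F\overline{\widetilde{u}}_{\zeta}F^{-1}$ I would compute its $(i,j)$-entry as $\sum_{\alpha,\beta}F_{i\alpha}\zeta^{d_\alpha(d_\beta-d_\alpha)}u_{\alpha\beta}^*F^{\beta j}\otimes z^{d_\alpha-d_\beta}$ and invoke admissibility (Definition \ref{admissible}): the surviving summands satisfy $d_\alpha = d - d_i'$ and $d_\beta = d - d_j'$, so $d_\alpha - d_\beta = d_j' - d_i'$ is constant over the sum. Hence $(F\overline{\widetilde{u}}_{\zeta}F^{-1})_{ij} = u'_{ij} \otimes z^{d_j'-d_i'}$ with $u' = F\overline{u}_{\zeta}F^{-1}$ unitary in $\textup{C}(\textup{U}_{\zeta}^{+}(F))$ by the defining relations; applying the elementary observation again, now with $m_i = d_i'$, shows $F\overline{\widetilde{u}}_{\zeta}F^{-1}$ is unitary.

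It then remains to verify the two axioms of Definition \ref{circleaction} for $\rho := \rho^{\textup{C}(\textup{U}_{\zeta}^{+}(F))}$. For coassociativity, both $(\rho \otimes \mathrm{id}_{\textup{C}(\mathbb{T})})\circ\rho$ and $(\mathrm{id}\otimes\Delta_{\mathbb{T}})\circ\rho$ are unital $*$-homomorphisms, so it suffices to check equality on the generators, where each side sends $u_{ij}$ to $u_{ij}\otimes z^{d_j-d_i}\otimes z^{d_j-d_i}$ (using $\Delta_{\mathbb{T}}(z)=z\otimes z$). For the density condition (condition (2) of Definition \ref{circleaction}), since each $\rho(u_{ij}) = u_{ij}\otimes z^{d_j-d_i}$ is homogeneous and $z$ is invertible in $\textup{C}(\mathbb{T})$, we have $u_{ij}\otimes 1 = \rho(u_{ij})(1\otimes z^{d_i-d_j}) \in \rho(\textup{C}(\textup{U}_{\zeta}^{+}(F)))(1\otimes\textup{C}(\mathbb{T}))$; closing under products and multiplying by $1\otimes z^k$ then exhausts $\textup{C}(\textup{U}_{\zeta}^{+}(F))\otimes\textup{C}(\mathbb{T})$, as required.

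The only step carrying any real content is the unitarity check for $F\overline{\widetilde{u}}_{\zeta}F^{-1}$, where admissibility is essential to collapse the $z$-exponents in the double sum to the single monomial $z^{d_j'-d_i'}$; once that is in place the argument is the same ``inner grading'' bookkeeping already used for $\widetilde{u}$, and the remaining verifications are routine.
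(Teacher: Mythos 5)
Your proposal is correct and follows essentially the same route as the paper: the paper defines $\rho_z$ on the free $*$-algebra and checks that the defining relations (unitarity of $u$ and of $u'=F\overline{u}_{\zeta}F^{-1}$) are homogeneous, using admissibility exactly where you collapse the $z$-exponents to $z^{d'_j-d'_i}$, which is the same verification you phrase via the universal property applied to $\widetilde{u}_{ij}=u_{ij}\otimes z^{d_j-d_i}$. The only difference is that you spell out the coassociativity and Podle\'s-type density conditions of Definition \ref{circleaction}, which the paper dismisses as an easy check.
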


\begin{proposition}\label{coproduct}
There is a unique unital $*$-homomorphism \[\Delta_{\textup{U}_{\zeta}^{+}(F)} : \textup{C}(\textup{U}_{\zeta}^{+}(F)) \rightarrow \textup{C}(\textup{U}_{\zeta}^{+}(F)) \boxtimes_{\zeta} \textup{C}(\textup{U}_{\zeta}^{+}(F))\] such that $\Delta_{\textup{U}_{\zeta}^{+}(F)}(u_{ij})=\sum_{k=1}^nj_1(u_{ik})j_2(u_{kj})$ for $1 \leq i,j \leq n$. Furthermore, $\Delta_{\textup{U}_{\zeta}^{+}(F)}$ is $\mathbb{T}$\nobreakdash-equivariant, coassociative and bisimplifiable \textup{(}see Definition \textup{\ref{bcqg})}.
\end{proposition}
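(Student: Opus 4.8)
The plan is to obtain $\Delta := \Delta_{\textup{U}_{\zeta}^{+}(F)}$ from the universal property of $A := \textup{C}(\textup{U}_{\zeta}^{+}(F))$, to verify $\mathbb{T}$-equivariance and coassociativity by evaluating on the generators, and --- the substantive step --- to prove bisimplifiability.

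\emph{Existence and uniqueness.} By the universal property of $A$, to define $\Delta\colon A\to A\boxtimes_{\zeta}A$ it suffices to exhibit elements $v_{ij}\in A\boxtimes_{\zeta}A$ such that $v=(v_{ij})$ and $F\overline{v}_{\zeta}F^{-1}$ are both unitary, where $\overline{v}_{\zeta}:=(\zeta^{d_i(d_j-d_i)}v_{ij}^*)$ as in Definition \ref{conj}; then $\Delta$ is the induced $*$-homomorphism, and it is unique because the $u_{ij}$ generate $A$. I would take $v_{ij}:=\sum_k j_1(u_{ik})j_2(u_{kj})$. Unitarity of $v$ is a direct computation: \eqref{commutation} gives $v_{ij}^*=\sum_a\zeta^{(d_a-d_j)(d_i-d_a)}j_1(u_{ia}^*)j_2(u_{aj}^*)$, and in $\sum_m v_{mi}^* v_{mj}$ one moves each $j_2$-leg past each $j_1$-leg by \eqref{commutation}; the $\zeta$-powers combine so that summing over $m$ collapses via $\sum_m u_{ma}^* u_{mb}=\delta_{ab}$ and then $\sum_a u_{ai}^* u_{aj}=\delta_{ij}$ to $\delta_{ij}$, and $\sum_m v_{im}v_{jm}^*=\delta_{ij}$ is symmetric. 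For the conjugate, set $u'_{ij}:=(F\overline{u}_{\zeta}F^{-1})_{ij}$, which is unitary in $A$ by the definition of $A$ and, by admissibility (Definition \ref{admissible}), homogeneous of degree $d'_j-d'_i$. A short $\zeta$-exponent computation --- coming down to $d_\alpha(d_\beta-d_\alpha)+(d_\gamma-d_\beta)(d_\alpha-d_\gamma)=d_\alpha(d_\gamma-d_\alpha)+d_\gamma(d_\beta-d_\gamma)$, both sides being $-d_\alpha^2+d_\alpha d_\gamma-d_\gamma^2+d_\gamma d_\beta$ --- then identifies $(F\overline{v}_{\zeta}F^{-1})_{ij}$ with $\sum_k j_1(u'_{ik})j_2(u'_{kj})$, so that $\Delta(u'_{pq})=\sum_l j_1(u'_{pl})j_2(u'_{lq})$ and, repeating the computation above with $u,\underline{d}$ replaced by $u',\underline{d}'$, this matrix is unitary.

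\emph{Equivariance and coassociativity.} Both are checks on the generators, all maps involved being $*$-homomorphisms. Each $j_1(u_{ik})j_2(u_{kj})$ is homogeneous of degree $(d_k-d_i)+(d_j-d_k)=d_j-d_i$, so $\rho^{A\boxtimes_{\zeta}A}(\Delta(u_{ij}))=\Delta(u_{ij})\otimes z^{d_j-d_i}=(\Delta\otimes\mathrm{id}_{\textup{C}(\mathbb{T})})(\rho^A(u_{ij}))$, giving equivariance. For coassociativity, Lemma \ref{morphism} together with the fact that the leg embeddings are $*$-homomorphisms reduces both $(\Delta\boxtimes_{\zeta}\mathrm{id})\Delta(u_{ij})$ and $(\mathrm{id}\boxtimes_{\zeta}\Delta)\Delta(u_{ij})$ to the single expression $\sum_{k,l}\ell_1(u_{ik})\ell_2(u_{kl})\ell_3(u_{lj})$ in the triple braided product, $\ell_1,\ell_2,\ell_3$ denoting the canonical embeddings of $A$ into its three legs; no $\zeta$-powers intervene, since each homomorphism is applied to an already ordered product, and the coincidence of the two expressions is precisely the compatibility of the leg maps with the associator of $(\mathcal{C}^*_{\mathbb{T}},\boxtimes_{\zeta})$ established in \cites{mrw2014,mrw2016,kmrw2016}.

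\emph{Bisimplifiability.} This is the main obstacle. I would start from $v=\Delta(u)=w_1w_2$ with $(w_1)_{ik}=j_1(u_{ik})$ and $(w_2)_{kj}=j_2(u_{kj})$, both unitary matrices over $A\boxtimes_{\zeta}A$. From $w_1=vw_2^*$ one reads off $j_1(u_{ik})=\sum_l\Delta(u_{il})j_2(u_{kl}^*)\in\Delta(A)j_2(A)$, and the same reasoning applied to $u'$ (unitary, and satisfying $\Delta(u'_{pq})=\sum_l j_1(u'_{pl})j_2(u'_{lq})$ as noted above) combined with the inversion $\overline{u}_{\zeta}=F^{-1}u'F$ yields $j_1(u_{ik}^*)\in\Delta(A)j_2(A)$ as well. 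To promote this to all of $j_1(A)$ one needs the absorption property $j_2(A)\Delta(A)\subseteq\Delta(A)j_2(A)$: the set of $a\in A$ for which $j_2(b)\Delta(a)\in\Delta(A)j_2(A)$ for all $b$ is norm-closed, is a subalgebra, and contains each $u_{ij}$ (commute $j_2(b)$ past the $j_1$-legs by \eqref{commutation} and re-substitute $j_1(u_{ik})=\sum_l\Delta(u_{il})j_2(u_{kl}^*)$) and each $u_{ij}^*$ (reduce to $u'$), hence equals $A$. Granting this, $\Delta(A)j_2(A)$ is a $\textup{C}^*$-subalgebra of $A\boxtimes_{\zeta}A$ --- self-adjoint and multiplicatively closed by the absorption property --- containing $j_1(u_{ij}),j_1(u_{ij}^*),j_2(u_{ij}),j_2(u_{ij}^*)$, hence containing $j_1(A)$ and $j_2(A)$, hence equal to $j_1(A)j_2(A)=A\boxtimes_{\zeta}A$: the first bisimplifiability identity. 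The second, $\Delta(A)j_1(A)=A\boxtimes_{\zeta}A$, is where the braiding genuinely interferes: moving a $j_1$-leg past a $j_2$-leg now produces $\zeta$-powers that depend on the summation index and cannot be extracted before invoking the orthogonality of $u$, so the direct computation stalls. The way around this is to prove first the \emph{reversed} identity $j_1(A)\Delta(A)=A\boxtimes_{\zeta}A$ --- for which the analogous argument does close up, using $j_2(u_{lj})=\sum_i j_1(u_{il}^*)\Delta(u_{ij})$ (from $w_2=w_1^*v$) and the absorption property $\Delta(A)j_1(A)\subseteq j_1(A)\Delta(A)$ shown as above --- and then to take adjoints: $(j_1(A)\Delta(A))^*=\Delta(A)j_1(A)$ while the left-hand side equals $(A\boxtimes_{\zeta}A)^*=A\boxtimes_{\zeta}A$. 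I expect the degree- and $\zeta$-bookkeeping inside the absorption arguments to be the only delicate point; the remainder is assembling standard facts about the braided tensor product.
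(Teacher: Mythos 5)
Your proposal is correct, and its skeleton is the paper's: the same matrices $U_{ij}=\sum_k j_1(u_{ik})j_2(u_{kj})$ and $U'=F\overline{U}_{\zeta}F^{-1}$, the same $\zeta$-exponent identity and insertion of $F^{-1}F$ to identify $U'_{ij}$ with $\sum_l j_1(u'_{il})j_2(u'_{lj})$, unitarity of $U,U'$ from unitarity of $u,u'$, the universal property, and the same degree count for equivariance and three-leg expression for coassociativity. The only divergence is in how the cancellation laws are organized. For $\Delta(A)j_2(A)=A\boxtimes_{\zeta}A$ the paper works with the set $S=\{x\in A: j_1(x)\in\Delta(A)j_2(A)\}$, gets the generators into $S$ from the same identity $j_1(u_{ik})=\sum_l\Delta(u_{il})j_2(u^*_{kl})$ (and implicitly its $u'$-analogue for the adjoints, which you spell out), and closes $S$ under multiplication using only the graded commutation $j_1(x)j_2(A)=j_2(A)j_1(x)$ for homogeneous $x$ --- no absorption property involving $\Delta$ is needed; you instead prove the absorption $j_2(A)\Delta(A)\subseteq\Delta(A)j_2(A)$ (your argument for it, re-substituting $j_1(u_{ik})\in\Delta(A)j_2(A)$ after commuting the phases into the $j_2$-leg, is sound) and conclude that $\Delta(A)j_2(A)$ is a $\textup{C}^*$-subalgebra containing both legs. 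Both routes work; the paper's is marginally more economical, yours is the more standard ``Podle\'s-type'' bookkeeping. For the second identity the paper merely asserts it is ``exactly similar,'' whereas you correctly observe that the literal mirror produces $j_1(A)\Delta(A)=A\boxtimes_{\zeta}A$ (the index-dependent $\zeta$-phases block a direct run at $\Delta(A)j_1(A)$) and you recover $\Delta(A)j_1(A)=A\boxtimes_{\zeta}A$ by taking adjoints of the self-adjoint set $j_1(A)\Delta(A)$; this is a legitimate and welcome completion of the step the paper skips.
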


Summarizing, we make the following definition.

\begin{definition}\label{thequantum}
We define the braided free unitary quantum group, denoted $\textup{U}_{\zeta}^{+}(F)$, to be the braided compact quantum group $(\textup{C}(\textup{U}_{\zeta}^{+}(F)),\rho^{\textup{C}(\textup{U}_{\zeta}^{+}(F))},\Delta_{\textup{U}_{\zeta}^{+}(F)})$, constructed above. 
\end{definition}

We now come to the bosonization construction which gives an equivalence between the category of braided compact quantum groups and the category of ordinary compact quantum groups together with an idempotent quantum group homomorphism. We shall describe explicitly the bosonization of the braided free unitary quantum group constructed above, to be used in the sequel; however, we omit the proofs, as they are contained in \cites{mrw2016,MR2019,cocycle-twist}. 
%We begin with recalling a few necessary preliminaries.
%
%\begin{proposition}\label{psi}\cite{mrw2016}
%Let $(X,\rho^X)$ and $(Y,\rho^Y)$ be two $\mathbb{T}$\nobreakdash-$\textup{C}^*$\nobreakdash-algebras. Then there is a unique morphism \[\psi^{X,Y} \in \mathrm{Mor}(\textup{C}(\mathbb{T}) \boxtimes_{\zeta} X \boxtimes_{\zeta} Y, (\textup{C}(\mathbb{T}) \boxtimes_{\zeta} X) \otimes (\textup{C}(\mathbb{T}) \boxtimes_{\zeta} Y))\]
%such that
%\allowdisplaybreaks{
%\begin{align*}
%\psi^{X,Y}(j_1(x))={}&(j_1 \otimes j_1)\Delta_{\mathbb{T}}(x),\\
%\psi^{X,Y}(j_2(a))={}&(j_2 \otimes j_1)(\rho^{X}(a)),\\
%\psi^{X,Y}(j_3(b))={}&1_{\textup{C}(\mathbb{T}) \boxtimes_{\zeta} X} \otimes j_2(b),
%\end{align*}
%}for $x \in \textup{C}(\mathbb{T})$, $a \in X$, and $b \in Y$.    
%\end{proposition}
%
%With $\psi$ in hand, we can now recall the definition of the bosonization of $G$.
%
%\begin{proposition}\label{bos}\cite{mrw2016}
%Let $G=(\textup{C}(G),\rho^{\textup{C}(G)},\Delta_G)$ be a braided compact quantum group. Then the pair $(\textup{C}(\mathbb{T})\boxtimes_{\zeta} \textup{C}(G), \psi^{\textup{C}(G),\textup{C}(G)}\circ (\mathrm{id}_{\textup{C}(\mathbb{T})} \boxtimes_{\zeta} \Delta_G))$ satisfies the axioms for a compact quantum group, called the bosonization of $G$ and denoted by $G \rtimes \mathbb{T}=(\textup{C}(G \rtimes \mathbb{T}),\Delta_{G \rtimes \mathbb{T}})$.
%\end{proposition}
%
%We then have the following theorem.

\begin{theorem}\label{explicitboso}
Let $\textup{U}_{\zeta}^+(F) \rtimes \mathbb{T}=(\textup{C}(\textup{U}_{\zeta}^+(F) \rtimes \mathbb{T}),\Delta_{\textup{U}_{\zeta}^+(F) \rtimes \mathbb{T}})$ be the bosonization of the braided free unitary quantum group $\textup{U}_{\zeta}^+(F)$ for an admissible $F$. Then $\textup{C}(\textup{U}_{\zeta}^+(F) \rtimes \mathbb{T})$ is the universal unital $\textup{C}^*$\nobreakdash-algebra generated by elements $z$ and $u_{ij}$ for $1 \leq i,j \leq n$ subject to
\begin{enumerate}
    \item the relation $zz^*=z^*z=1$,
    \item the commutation relations $zu_{ij}=\zeta^{d_i-d_j}u_{ij}z$, for $1 \leq i,j \leq n$,
    \item and the relations that make the two matrices $u=(u_{ij})_{1 \leq i,j \leq n}$ and $F\overline{u}_{\zeta}F^{-1}$ unitaries, where $\overline{u}_{\zeta}=(\zeta^{d_i(d_j-d_i)}u^*_{ij})_{1 \leq i,j \leq n}$.
\end{enumerate}
Furthermore, the comultiplication $\Delta_{\textup{U}_{\zeta}^+(F) \rtimes \mathbb{T}}$ is given by
\begin{equation}
\Delta_{\textup{U}_{\zeta}^+(F) \rtimes \mathbb{T}}(z)=z \otimes z, \quad \Delta_{\textup{U}_{\zeta}^+(F) \rtimes \mathbb{T}}(u_{ij})=\sum_{k=1}^n u_{ik} \otimes z^{d_k-d_i}u_{kj},
\end{equation} for $1 \leq i,j \leq n$.
\end{theorem}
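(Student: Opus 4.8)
The plan is to unwind the definition of the bosonization $G \rtimes \mathbb{T} = (\textup{C}(\mathbb{T}) \boxtimes_\zeta \textup{C}(G), \psi \circ (\mathrm{id} \boxtimes_\zeta \Delta_G))$ from Proposition \ref{bos} in the concrete case $G = \textup{U}_\zeta^+(F)$, and then recognize the resulting $\textup{C}^*$-algebra by its universal property. First I would identify $\textup{C}(\mathbb{T}) \boxtimes_\zeta \textup{C}(\textup{U}_\zeta^+(F))$ concretely: by Definition \ref{braided} it is $j_1(\textup{C}(\mathbb{T})) j_2(\textup{C}(\textup{U}_\zeta^+(F)))$ sitting inside $\textup{C}(\mathbb{T}) \otimes \textup{C}(\textup{U}_\zeta^+(F)) \otimes \textup{C}(\mathbb{T}^2_\zeta)$. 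Writing $z$ for $j_1$ of the generator of $\textup{C}(\mathbb{T})$ (which is homogeneous of degree $1$) and abusing notation to write $u_{ij}$ for $j_2(u_{ij})$, the commutation rule \eqref{commutation} with $\mathrm{deg}(z)=1$, $\mathrm{deg}(u_{ij}) = d_j - d_i$ gives exactly $z u_{ij} = \zeta^{\mathrm{deg}(z)\mathrm{deg}(u_{ij})} \cdot (\text{reordering}) $; being careful with the sign convention in \eqref{commutation}, this yields relation (2), $z u_{ij} = \zeta^{d_i - d_j} u_{ij} z$. Relations (1) and (3) are inherited from $\textup{C}(\mathbb{T})$ and $\textup{C}(\textup{U}_\zeta^+(F))$ respectively, since $j_1, j_2$ are injective $*$-homomorphisms. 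To see that these are \emph{all} the relations — i.e., that the natural surjection from the abstract universal $\textup{C}^*$-algebra $B$ with generators $z, u_{ij}$ and relations (1)--(3) onto $\textup{C}(\mathbb{T}) \boxtimes_\zeta \textup{C}(\textup{U}_\zeta^+(F))$ is an isomorphism — I would construct an inverse. The crossed-product structure $\textup{C}(\mathbb{T}) \boxtimes_\zeta \textup{C}(\textup{U}_\zeta^+(F)) \cong \textup{C}(\textup{U}_\zeta^+(F)) \rtimes_{\zeta^{-1}} \mathbb{Z}$ identifies the braided tensor product with an honest crossed product of $\textup{C}(\textup{U}_\zeta^+(F))$ by the integer action generated by the grading automorphism $\rho^{\textup{C}(\textup{U}_\zeta^+(F))}_{\zeta^{-1}}$ (which sends $u_{ij} \mapsto \zeta^{d_i - d_j} u_{ij}$); this is precisely the content one wants, and its universal property as a full crossed product matches relations (1)--(3). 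Indeed the covariant pair $(\pi, V)$ with $\pi(u_{ij}) = u_{ij}$, $V = z$ exists on $B$ by construction, giving the reverse map.

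Next I would compute the comultiplication. By Proposition \ref{bos}, $\Delta_{\textup{U}_\zeta^+(F) \rtimes \mathbb{T}} = \psi^{\textup{C}(G), \textup{C}(G)} \circ (\mathrm{id}_{\textup{C}(\mathbb{T})} \boxtimes_\zeta \Delta_{\textup{U}_\zeta^+(F)})$. On the generator $z = j_1(\text{gen})$: first $(\mathrm{id} \boxtimes_\zeta \Delta_{\textup{U}_\zeta^+(F)})(j_1(z)) = j_1(z)$ and then $\psi$ applied to $j_1(z)$ is, per Proposition \ref{psi}, $(j_1 \otimes j_1)\Delta_{\mathbb{T}}(z) = (j_1 \otimes j_1)(z \otimes z) = z \otimes z$, giving the first displayed formula. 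On $u_{ij}$: recall $u_{ij} \in \textup{C}(\textup{U}_\zeta^+(F))$ is embedded via $j_2$ into $\textup{C}(\mathbb{T}) \boxtimes_\zeta \textup{C}(\textup{U}_\zeta^+(F))$, so $(\mathrm{id} \boxtimes_\zeta \Delta_{\textup{U}_\zeta^+(F)})(j_2(u_{ij})) = j_2\big(\Delta_{\textup{U}_\zeta^+(F)}(u_{ij})\big)$; but here the second "$j_2$" refers to the embedding of $\textup{C}(G) \boxtimes_\zeta \textup{C}(G)$, so this is $\sum_k j_2 j_1(u_{ik}) \, j_2 j_2(u_{kj})$ — in the notation of Proposition \ref{psi} where the three legs of $\textup{C}(\mathbb{T}) \boxtimes_\zeta \textup{C}(G) \boxtimes_\zeta \textup{C}(G)$ carry embeddings $j_1, j_2, j_3$, this reads $\sum_k j_2(u_{ik}) j_3(u_{kj})$. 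Now apply $\psi$: $\psi(j_2(u_{ik})) = (j_2 \otimes j_1)(\rho^{\textup{C}(G)}(u_{ik}))$ and $\psi(j_3(u_{kj})) = 1 \otimes j_2(u_{kj})$. Since $\rho^{\textup{C}(G)}(u_{ik}) = u_{ik} \otimes z^{d_k - d_i}$ (homogeneity of degree $d_k - d_i$ for $u_{ik}$), we get $\psi(j_2(u_{ik})) = j_2(u_{ik}) \otimes j_1(z^{d_k - d_i}) = u_{ik} \otimes z^{d_k - d_i}$ in the identification above. Multiplying, $\Delta_{\textup{U}_\zeta^+(F) \rtimes \mathbb{T}}(u_{ij}) = \sum_k (u_{ik} \otimes z^{d_k - d_i})(1 \otimes u_{kj}) = \sum_k u_{ik} \otimes z^{d_k - d_i} u_{kj}$, as claimed.

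I would organize the write-up as: (a) identify the underlying $\textup{C}^*$-algebra with the crossed product and verify it has the stated universal presentation, checking the three relations and exhibiting mutually inverse maps; (b) verify the comultiplication formulas by the two generator computations above; (c) note that $\Delta$ is uniquely determined since $z, u_{ij}$ generate. The main obstacle I anticipate is part (a): making the identification of $\textup{C}(\mathbb{T}) \boxtimes_\zeta \textup{C}(\textup{U}_\zeta^+(F))$ with the universal algebra $B$ rigorous, specifically checking that $B$ surjects \emph{isomorphically} — one direction (that relations (1)--(3) hold in the braided tensor product) is a direct computation from \eqref{commutation} and the injectivity of $j_1, j_2$, but the reverse requires either invoking that $\textup{C}(\mathbb{T}) \boxtimes_\zeta A \cong A \rtimes_\alpha \mathbb{Z}$ for the grading automorphism $\alpha = \rho^A_{\zeta^{-1}}$ (so that the universal property of the full crossed product supplies the inverse homomorphism) or directly building a covariant representation on $B$; one must also take care that the sign/convention in the commutation relation \eqref{commutation} is tracked correctly to land on $\zeta^{d_i - d_j}$ rather than its inverse, and that the $\mathbb{Z}$-action is by $\zeta^{-1}$ and not $\zeta$, matching the statement of the theorem.
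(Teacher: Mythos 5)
Your proposal is correct and follows essentially the same route as the paper: identify $\textup{C}(\mathbb{T})\boxtimes_{\zeta}\textup{C}(\textup{U}_{\zeta}^{+}(F))$ as $j_1(\textup{C}(\mathbb{T}))j_2(\textup{C}(\textup{U}_{\zeta}^{+}(F)))$, read off relations (1) and (3) from the embeddings and relation (2) from Eq.~\eqref{commutation}, and compute $\Delta_{\textup{U}_{\zeta}^+(F)\rtimes\mathbb{T}}$ on the generators via Proposition~\ref{psi}, Proposition~\ref{bos} and Lemma~\ref{morphism}, exactly as in the paper's proof. The one place you go beyond the paper is in insisting on a proof that these are \emph{all} the relations (via the identification of $\textup{C}(\mathbb{T})\boxtimes_{\zeta}A$ with the crossed product by the grading automorphism induced by $\zeta^{-1}$, or a direct covariant pair on the universal algebra); the paper treats this universality as clear from the definition, so your extra care here is a legitimate completion rather than a divergence, and your tracking of the exponent convention in \eqref{commutation} lands on the same $\zeta^{d_i-d_j}$ as the paper.
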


Before describing the representation theory, let us state a proposition and an important corollary to it.

\begin{proposition}\label{fundamental}
Let $t_{ij}=j_1(z^{d_i})j_2(u_{ij}) \in \textup{C}(\textup{U}_{\zeta}^+(F) \rtimes \mathbb{T})=\textup{C}(\mathbb{T}) \boxtimes_{\zeta} \textup{C}(\textup{U}_{\zeta}^+(F))$ for $1 \leq i,j \leq n$. Then $t=(t_{ij})_{1 \leq i,j \leq n} \in M_n(\textup{C}(\textup{U}_{\zeta}^+(F) \rtimes \mathbb{T}))$ defines a finite dimensional unitary representation of the compact quantum group $\textup{U}_{\zeta}^+(F) \rtimes \mathbb{T}$.   
\end{proposition}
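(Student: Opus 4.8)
The plan is to verify the three defining conditions of a finite dimensional unitary representation of the compact quantum group $\textup{U}_{\zeta}^+(F) \rtimes \mathbb{T}$, namely that $t = (t_{ij})$ is unitary in $M_n(\textup{C}(\textup{U}_{\zeta}^+(F) \rtimes \mathbb{T}))$ and that $\Delta_{\textup{U}_{\zeta}^+(F) \rtimes \mathbb{T}}(t_{ij}) = \sum_k t_{ik} \otimes t_{kj}$. Throughout I will work inside $\textup{C}(\mathbb{T}) \boxtimes_{\zeta} \textup{C}(\textup{U}_{\zeta}^+(F)) \subseteq \textup{C}(\mathbb{T}) \otimes \textup{C}(\textup{U}_{\zeta}^+(F)) \otimes \textup{C}(\mathbb{T}^2_{\zeta})$, using freely the commutation rule $j_1(z)j_2(u_{ij}) = \zeta^{d_i - d_j} j_2(u_{ij})j_1(z)$ established in Theorem \ref{explicitboso}, together with the fact that $j_1$ and $j_2$ are $*$-homomorphisms and that $u = (u_{ij})$ is unitary in the sense of Eq.\eqref{unitary}.

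First I would check unitarity. Since $j_1(z^{d_i})$ is a unitary in the multiplier algebra, $t_{ij}^* = j_2(u_{ij})^* j_1(z^{d_i})^* = j_2(u_{ij}^*) j_1(z^{-d_i})$. Computing $\sum_k t_{ki}^* t_{kj} = \sum_k j_2(u_{ki}^*) j_1(z^{-d_k}) j_1(z^{d_k}) j_2(u_{kj}) = \sum_k j_2(u_{ki}^*) j_2(u_{kj}) = j_2\bigl(\sum_k u_{ki}^* u_{kj}\bigr) = \delta_{ij} 1$; here the key point is that the two powers of $z$ cancel because both matrix indices in each summand are $k$, so no $\zeta$-twist is picked up. For the other direction, $\sum_k t_{ik} t_{jk}^*$ requires moving $j_1(z^{d_i})$ past $j_2(u_{jk}^*)$; this produces a factor $\zeta^{d_i(d_j - d_k)}$ (using $\mathrm{deg}(u_{jk}^*) = d_j - d_k$), and after reassembling one gets $\sum_k \zeta^{d_i(d_j-d_k)} j_2(u_{ik}) j_2(u_{jk}^*) j_1(z^{d_i}) j_1(z^{-d_j})$. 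The cleanest route is instead to observe directly that $t_{ij} = j_1(z^{d_i}) j_2(u_{ij})$ and note the alternative form $t_{ij} = \zeta^{?} j_2(u_{ij}) j_1(z^{d_j})$ (obtained by commuting), so that $t$ is the product of the diagonal unitary $\mathrm{diag}(j_1(z^{d_i}))$ with the unitary matrix $j_2(u)$; since the conjugation of a unitary matrix by a diagonal unitary is again unitary, $t$ is unitary. Spelling out this bookkeeping is the one genuinely fiddly computation, but it is forced by the commutation relation and involves no new idea.

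Next I would verify the comultiplication identity. Using $\Delta_{\textup{U}_{\zeta}^+(F) \rtimes \mathbb{T}}(j_1(z)) = j_1(z) \otimes j_1(z)$ and $\Delta_{\textup{U}_{\zeta}^+(F) \rtimes \mathbb{T}}(j_2(u_{ij})) = \sum_k j_2(u_{ik}) \otimes j_1(z^{d_k - d_i}) j_2(u_{kj})$ from Theorem \ref{explicitboso}, and that $\Delta_{\textup{U}_{\zeta}^+(F) \rtimes \mathbb{T}}$ is a $*$-homomorphism, I compute
\[
\Delta_{\textup{U}_{\zeta}^+(F) \rtimes \mathbb{T}}(t_{ij}) = \bigl(j_1(z^{d_i}) \otimes j_1(z^{d_i})\bigr)\sum_{k} \bigl(j_2(u_{ik}) \otimes j_1(z^{d_k - d_i}) j_2(u_{kj})\bigr).
\]
Distributing over the first tensor leg gives the first factor $j_1(z^{d_i}) j_2(u_{ik}) = t_{ik}$, and over the second leg gives $j_1(z^{d_i}) j_1(z^{d_k - d_i}) j_2(u_{kj}) = j_1(z^{d_k}) j_2(u_{kj}) = t_{kj}$, so the expression collapses to $\sum_k t_{ik} \otimes t_{kj}$ exactly as required. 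This step is essentially automatic once the formulas of Theorem \ref{explicitboso} are in hand — the $z$-powers telescope perfectly.

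The main obstacle, such as it is, is purely notational: keeping careful track of the $\zeta$-twists that arise whenever $j_1(z)$ is commuted past $j_2(u_{ij})$ in the second half of the unitarity check, and making sure one uses the correct degree $\mathrm{deg}(u_{ij}) = d_j - d_i$ and $\mathrm{deg}(u_{ij}^*) = d_i - d_j$. I expect no conceptual difficulty; everything follows from Proposition \ref{equivrep}-style bookkeeping, the relation \eqref{commutation}, unitarity \eqref{unitary} of $u$, and Theorem \ref{explicitboso}. A clean way to package the argument and avoid the twist computation altogether is to factor $t = D \cdot j_2(u)$ with $D = \mathrm{diag}\bigl(j_1(z^{d_1}), \dots, j_1(z^{d_n})\bigr)$, note $D$ and $j_2(u)$ are both unitary matrices over the same $\textup{C}^*$-algebra, hence so is $t$, and then do the comultiplication check as above.
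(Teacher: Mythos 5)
Your proof is correct, and the comultiplication half is essentially identical to the paper's: both apply $\Delta_{\textup{U}_{\zeta}^+(F)\rtimes\mathbb{T}}$ to $j_1(z^{d_i})$ and $j_2(u_{ij})$ via Theorem \ref{explicitboso} and watch the powers of $z$ telescope. For unitarity you package the argument differently: you factor $t=D\cdot j_2(u)$ with $D=\mathrm{diag}\bigl(j_1(z^{d_1}),\dots,j_1(z^{d_n})\bigr)$ and invoke the fact that a product of unitary matrices over the unital $\textup{C}^*$\nobreakdash-algebra $\textup{C}(\textup{U}_{\zeta}^+(F)\rtimes\mathbb{T})$ is unitary (your word ``conjugation'' is a slip --- it is a product, but the argument you actually run is the correct one), whereas the paper simply computes $\sum_k t^*_{ki}t_{kj}$ and $\sum_k t_{ik}t^*_{jk}$ directly. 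Your worry that the second sum forces you to commute $j_1(z^{d_i})$ past $j_2(u^*_{jk})$ and track $\zeta$\nobreakdash-twists is unfounded: in $t_{ik}t^*_{jk}=j_1(z^{d_i})\,j_2(u_{ik})\,j_2(u^*_{jk})\,j_1(z^{-d_j})$ the two $j_2$ factors are already adjacent, so one sums over $k$ in the middle, uses $\sum_k u_{ik}u^*_{jk}=\delta_{ij}$, and is left with $j_1(z^{d_i})\delta_{ij}j_1(z^{-d_j})=\delta_{ij}$ --- no commutation relation is needed at all, which is exactly how the paper keeps the check twist-free. (The intermediate twisted expression you wrote down before abandoning that route is also not quite right --- it accounts for only one of the two moves --- but since you discard it in favour of the factorization, this does not affect the validity of your final argument.) Net effect: your factorization buys a computation-free unitarity proof, while the paper's direct check is equally short once one notices the adjacency; both are sound.
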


\begin{corollary}\label{cqmg}
The pair $(\textup{C}(\textup{U}_{\zeta}^+(F) \rtimes \mathbb{T}), z \oplus t)$ is a compact matrix quantum group.
\end{corollary}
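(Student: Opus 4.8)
The plan is to verify that the pair $(A,w)$ — with $A := \textup{C}(\textup{U}_{\zeta}^+(F) \rtimes \mathbb{T})$ and $w := z \oplus t \in M_{n+1}(A)$ — satisfies Woronowicz's three axioms for a compact matrix quantum group: the matrix entries generate a dense $*$-subalgebra of $A$, the coproduct is multiplicative on the matrix, and the matrix together with its entrywise adjoint $\bar w = (w_{ab}^*)$ is invertible in $M_{n+1}(A)$. By Proposition \ref{bos} the pair $(A, \Delta_{\textup{U}_{\zeta}^+(F) \rtimes \mathbb{T}})$ is already a compact quantum group, and by Proposition \ref{fundamental} the block $t$ is a unitary with $\Delta_{\textup{U}_{\zeta}^+(F) \rtimes \mathbb{T}}(t_{ij}) = \sum_k t_{ik} \otimes t_{kj}$, so these two results do almost all of the work.

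I would first check generation. By Definition \ref{braided}, $A = \textup{C}(\mathbb{T}) \boxtimes_{\zeta} \textup{C}(\textup{U}_{\zeta}^+(F)) = j_1(\textup{C}(\mathbb{T}))\,j_2(\textup{C}(\textup{U}_{\zeta}^+(F)))$, and since $\textup{C}(\mathbb{T})$ is generated by $z$, $\textup{C}(\textup{U}_{\zeta}^+(F))$ by the $u_{ij}$, and $j_1, j_2$ are $*$-homomorphisms, $A$ is generated as a $\textup{C}^*$-algebra by $z = j_1(z)$ and the elements $j_2(u_{ij})$. Now $z$ is the $(1,1)$-entry of $w$, and since $t_{ij} = j_1(z^{d_i})\,j_2(u_{ij})$ with $j_1(z^{d_i}) = z^{d_i}$ unitary, we recover $j_2(u_{ij}) = z^{-d_i}\,t_{ij}$; hence the entries of $w$ generate $A$. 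Next, for multiplicativity of the coproduct: Theorem \ref{explicitboso} gives $\Delta_{\textup{U}_{\zeta}^+(F) \rtimes \mathbb{T}}(z) = z \otimes z$, Proposition \ref{fundamental} gives the matrix-coefficient identity for the $t_{ij}$, and since $w$ is block-diagonal these combine into $\Delta_{\textup{U}_{\zeta}^+(F) \rtimes \mathbb{T}}(w_{ab}) = \sum_c w_{ac} \otimes w_{cb}$. Finally, for invertibility: $z$ is unitary and $t$ is unitary by Proposition \ref{fundamental}, so $w = z \oplus t$ is unitary and in particular invertible; moreover $\bar w$ is invertible too, its transpose $(\bar w)^{t} = w^*$ being a two-sided inverse of $w$. (Alternatively, applying $*$ to the identity of the previous step exhibits $\bar w$ as a finite-dimensional representation of the compact quantum group $\textup{U}_{\zeta}^+(F) \rtimes \mathbb{T}$, hence equivalent to a unitary one and in particular invertible.)

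I do not expect any serious obstacle: the substance is carried by Propositions \ref{bos} and \ref{fundamental}, and the corollary is largely bookkeeping. The one point I would treat carefully is the generation step — namely that $j_1(\textup{C}(\mathbb{T}))\,j_2(\textup{C}(\textup{U}_{\zeta}^+(F)))$ is, as a $\textup{C}^*$-algebra, generated by the images under $j_1$ and $j_2$ of the generators of the two tensor factors — but this is immediate from the definition of $\boxtimes_{\zeta}$ and the fact that $j_1, j_2$ are $*$-homomorphisms.
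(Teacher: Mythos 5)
Your generation step and the multiplicativity of the comultiplication are fine and match the paper's proof (which simply cites Theorem \ref{explicitboso} and Proposition \ref{fundamental} for these points). The problem is your main argument for the invertibility of $\overline{w}$, where $w=z\oplus t$: you argue that $\overline{w}$ is invertible because its transpose $(\overline{w})^{t}=w^{*}$ is invertible. Over a noncommutative $\textup{C}^*$\nobreakdash-algebra this inference is invalid: transposition is not an (anti)homomorphism of matrix algebras when the entries do not commute, so invertibility of $(\overline{w})^{t}$ gives you no inverse for $\overline{w}$. Indeed, if ``$w$ unitary $\Rightarrow\overline{w}$ invertible'' were a matter of pure matrix algebra, the second condition in Woronowicz's definition of a compact matrix quantum group would be redundant for unitary fundamental matrices (it is not --- this is exactly why the defining relations of $\textup{C}(\textup{U}_{\zeta}^{+}(F))$ impose that $F\overline{u}_{\zeta}F^{-1}$ be unitary in addition to $u$), and the paper's own proof of this corollary would not need its only nontrivial computation.

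Your parenthetical fallback is essentially the right repair, but it should be quoted correctly: applying $*$ to the coproduct identity shows that $\overline{w}$ satisfies the corepresentation identity, and since Proposition \ref{bos} guarantees that $\textup{U}_{\zeta}^{+}(F)\rtimes\mathbb{T}$ is a genuine compact quantum group, one may invoke the theorem that the contragredient of a finite-dimensional \emph{unitary} representation of a compact quantum group is invertible (and equivalent to a unitary one). As phrased (``a finite-dimensional representation, hence equivalent to a unitary one''), you are begging the question, because the standard statement ``every finite-dimensional representation is equivalent to a unitary one'' presupposes invertibility as part of the definition of representation. The paper avoids general representation theory altogether and argues directly: from $zu_{ij}=\zeta^{d_i-d_j}u_{ij}z$ one gets $t_{ij}^{*}=z^{-d_i}(\overline{u}_{\zeta})_{ij}$, hence $\overline{t}=\mathrm{diag}(z^{-d_1},\dots,z^{-d_n})\,\overline{u}_{\zeta}$; since $F\overline{u}_{\zeta}F^{-1}$ is unitary by construction, $\overline{u}_{\zeta}$ is invertible, so $\overline{t}$ is invertible, and $\overline{z}=z^{-1}$, giving invertibility of $\overline{z}\oplus\overline{t}$. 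Either fix (the correctly quoted general theorem, or the paper's explicit computation) closes the gap; your transpose argument does not.
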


Finally we discuss the representation theory of the braided free unitary quantum group $\textup{U}^+_{\zeta}(F)$. We begin by observing that the $\textup{C}^*$\nobreakdash-algebra $\textup{C}(\textup{U}^+_{\zeta}(F) \rtimes \mathbb{T})$ is the universal unital $\textup{C}^*$\nobreakdash-algebra generated by $z$ and $t_{ij}$ for $1 \leq i,j \leq n$, subject to the relations that make $z$, $t=(t_{ij})_{1 \leq i,j \leq n}$ and $\overline{t}=(t^*_{ij})_{1 \leq i,j \leq n}$ unitaries, together with the relations $zt_{ij}=\zeta^{d_i-d_j}t_{ij}z$ for $1 \leq i,j \leq n$. Indeed, the proof of Corollary \ref{cqmg} shows that $\overline{u}_{\zeta}$ being equivalent to a unitary is equivalent to $\overline{t}$ being equivalent to a unitary and since $F$ is diagonal, we have the prescribed relations by Theorem \ref{explicitboso}. Furthermore, the relation $zt_{ij}=\zeta^{d_i-d_j}t_{ij}z$ and its adjoint, yield equivalences $z \tenscorep t \cong t \tenscorep z$ and $z \tenscorep \overline{t} \cong \overline{t} \tenscorep z$, respectively.  Now for the matrix $F$, we consider the function algebra $A_u(F)$ of the free unitary quantum group, the generators being denoted by $x_{ij}$, $1 \leq i,j \leq n$. Multiplying the generators $x_{ij}$ by $\zeta^{d_i-d_j}$, $1 \leq i,j \leq n$ yields an automorphism $\alpha$ of the $\textup{C}^*$\nobreakdash-algebra $A_u(F)$. Following again \cite{MR2021}, there is a Hopf $*$\nobreakdash-homomorphism $\phi : A_u(F) \rightarrow \textup{C}(\textup{U}^+_{\zeta}(F) \rtimes \mathbb{T})$ mapping $x_{ij}$ to $t_{ij}$. The proof of the following is similar to Proposition 4.2 of \cite{MR2021}.

\begin{proposition}
The Hopf $*$\nobreakdash-homomorphism $\phi : A_u(F) \rightarrow \textup{C}(\textup{U}^+_{\zeta}(F) \rtimes \mathbb{T})$ extends to an isomorphism $A_u(F) \rtimes_{\alpha} \mathbb{Z} \cong \textup{C}(\textup{U}^+_{\zeta}(F) \rtimes \mathbb{T})$.
\end{proposition}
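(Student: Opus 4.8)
The plan is to construct a two-sided inverse to the extension of $\phi$ by a double application of universal properties. Recall first that the full crossed product $A_u(F) \rtimes_{\alpha} \mathbb{Z}$ carries a canonical copy of $A_u(F)$ and a canonical unitary $v$ with $v\, y\, v^* = \alpha(y)$ for $y \in A_u(F)$, and enjoys the universal property that unital $*$-homomorphisms $A_u(F) \rtimes_{\alpha} \mathbb{Z} \to B$ correspond to pairs $(\pi, w)$, where $\pi \colon A_u(F) \to B$ is a unital $*$-homomorphism and $w \in B$ is a unitary with $w\, \pi(y)\, w^* = \pi(\alpha(y))$; since $A_u(F)$ is generated by the $x_{ij}$, the covariance condition need only be checked on these generators. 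On the other side I will use the presentation of $\textup{C}(\textup{U}^+_{\zeta}(F) \rtimes \mathbb{T})$ recalled at the start of this appendix: the universal unital $\textup{C}^*$-algebra on generators $z$ and $t_{ij}$ subject to $z$, $t = (t_{ij})$ and $\overline{t}$ being unitary and to $z\, t_{ij} = \zeta^{d_i - d_j}\, t_{ij}\, z$.

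First I would extend $\phi$ to $\Phi \colon A_u(F) \rtimes_{\alpha} \mathbb{Z} \to \textup{C}(\textup{U}^+_{\zeta}(F) \rtimes \mathbb{T})$ by sending $v$ to $z$; this is legitimate because the pair $(\phi, z)$ is covariant, as $z\, \phi(x_{ij})\, z^* = z\, t_{ij}\, z^* = \zeta^{d_i - d_j} t_{ij} = \phi(\alpha(x_{ij}))$. By construction $\Phi$ restricts to $\phi$ on the canonical copy of $A_u(F)$. In the other direction I would define $\Psi \colon \textup{C}(\textup{U}^+_{\zeta}(F) \rtimes \mathbb{T}) \to A_u(F) \rtimes_{\alpha} \mathbb{Z}$ by $z \mapsto v$ and $t_{ij} \mapsto x_{ij}$: to invoke the universal property of $\textup{C}(\textup{U}^+_{\zeta}(F) \rtimes \mathbb{T})$ one checks that $v$ is unitary, that $x = (x_{ij})$ and $\overline{x}$ are unitary (for diagonal $F$ this is precisely the content of the defining relations of $A_u(F)$, as discussed at the beginning of the appendix), and that $v\, x_{ij} = \alpha(x_{ij})\, v = \zeta^{d_i - d_j} x_{ij}\, v$. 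The composites $\Phi \circ \Psi$ and $\Psi \circ \Phi$ fix the generating sets $\{z, t_{ij}\}$ and $\{v, x_{ij}\}$ respectively, hence are the identity, so $\Phi$ is a $*$-isomorphism extending $\phi$.

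To see that $\Phi$ is moreover an isomorphism of compact quantum groups, I would equip $A_u(F) \rtimes_{\alpha} \mathbb{Z}$ with the comultiplication determined by the standard comultiplication $\Delta(x_{ij}) = \sum_{k} x_{ik} \otimes x_{kj}$ of $A_u(F)$ together with $\Delta(v) = v \otimes v$; a short computation with the $\zeta$-phases gives $(v \otimes v)\, \Delta(x_{ij})\, (v \otimes v)^* = \Delta(\alpha(x_{ij}))$, so $\Delta$ descends to the crossed product. Since $\Delta_{\textup{U}^+_{\zeta}(F) \rtimes \mathbb{T}}(z) = z \otimes z$ and $\Delta_{\textup{U}^+_{\zeta}(F) \rtimes \mathbb{T}}(t_{ij}) = \sum_{k} t_{ik} \otimes t_{kj}$ by Proposition \ref{fundamental}, the map $\Phi$ manifestly intertwines the two comultiplications on generators, and therefore everywhere.

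I expect the main point requiring care to be the verification, needed in the construction of $\Psi$, that under $z \mapsto v$, $t_{ij} \mapsto x_{ij}$ the unitarity conditions on $t$ and $\overline{t}$ translate exactly into the defining relations of $A_u(F)$; this is where diagonality of $F$ is essential, via the same mechanism by which one passes from $u_{ij}$ to $t_{ij} = j_1(z^{d_i}) j_2(u_{ij})$ in the proof of Corollary \ref{cqmg}. Everything else is a formal manipulation of universal properties, exactly as in Proposition 4.2 of \cite{MR2021}.
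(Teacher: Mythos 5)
Your argument is correct and is essentially the proof the paper leaves implicit by deferring to Proposition 4.2 of \cite{MR2021}: mutually inverse $*$-homomorphisms obtained from the universal property of the crossed product (covariance of the pair $(\phi,z)$ checked on the generators $x_{ij}$) and from the presentation of $\textup{C}(\textup{U}^+_{\zeta}(F)\rtimes\mathbb{T})$ of Theorem \ref{explicitboso} rewritten in the variables $t_{ij}=j_1(z^{d_i})j_2(u_{ij})$, together with the comultiplication check via Proposition \ref{fundamental}. One imprecision should be fixed: for a non-scalar diagonal $F$ the defining relations of $A_u(F)$ are that $x$ and $F\overline{x}F^{-1}$ (not $\overline{x}$ itself) be unitary, and correspondingly the presentation of $\textup{C}(\textup{U}^+_{\zeta}(F)\rtimes\mathbb{T})$ in the generators $z,t_{ij}$ has $z$, $t$ and $F\overline{t}F^{-1}$ unitary plus $zt_{ij}=\zeta^{d_i-d_j}t_{ij}z$; indeed, by the computation in Corollary \ref{cqmg} one has $\overline{t}=\mathrm{diag}(z^{-d_1},\dots,z^{-d_n})\,\overline{u}_{\zeta}$, so it is $F\overline{t}F^{-1}=\mathrm{diag}(z^{-d_1},\dots,z^{-d_n})\,F\overline{u}_{\zeta}F^{-1}$ that is unitary, using that the diagonal matrix $F$ commutes with $\mathrm{diag}(z^{-d_i})$. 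This does not affect your proof, since the substitution $z\mapsto v$, $t_{ij}\mapsto x_{ij}$ carries the relation ``$F\overline{t}F^{-1}$ unitary'' exactly to the defining relation ``$F\overline{x}F^{-1}$ unitary'' of $A_u(F)$, so both universal-property checks, the identification of the composites on generators, and the intertwining of the comultiplications go through verbatim.
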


The Hopf $*$\nobreakdash-homomorphism $\phi$ induces a fully faithful strict tensor functor $\phi_{\ast}$ from the representation category of $A_u(F)$ to that of $\textup{U}^+_{\zeta}(F) \rtimes \mathbb{T}$. The irreducible representations of the free unitary quantum group, as described by Banica \cite{banicaunitary}, are enumerated by $r_{a}$, $a \in \mathbb{N}\ast \mathbb{N}$, with $r_e=1$, $r_{\alpha}=x$, $r_{\beta}=\bar x$, where $\mathbb{N} \ast \mathbb{N}$ is the coproduct in the category of monoids of two copies of $\mathbb{N}$,
generated respectively by $\alpha$ and $\beta$; $e$ denotes the unit of $\mathbb{N} \ast \mathbb{N}$; and $\overline{(\cdot)}$ denotes the involution of $\mathbb{N} \ast \mathbb{N}$ given by $\bar e=e$, $\bar \alpha=\beta$, $\bar \beta=\alpha$ and extended by antimultiplicativity. One furthermore has for any $x$ and $y$ in $\mathbb{N} \ast \mathbb{N}$, $r_{\bar x}=\overline{r_{x}}$ and the fusion rule,
\begin{equation*}
r_{x} \tenscorep r_{y} \cong \bigoplus_{\{a,b,g \in \mathbb{N} \ast \mathbb{N} \mid x=ag, y=\bar g b\}} r_{ab}. 
\end{equation*}
A lemma analogous to Lemma 4.3 of \cite{MR2021} says that the representations $z^x \tenscorep \phi_{\ast}(r_{x'})$ with $x \in \mathbb{Z}$ and $x' \in \mathbb{N} \ast \mathbb{N}$ are all irreducible and distinct; furthermore, any irreducible representation is of this form. And finally, the fusion rules then become
\begin{equation*}
\overline{z^x \tenscorep r_{x'}} \cong \overline{r_{x'}} \tenscorep \overline{z^x} \cong r_{\overline{x'}} \tenscorep z^{-x} \cong z^{-x} \tenscorep r_{\overline{x'}},
\end{equation*} 
\begin{equation*}
(z^x \tenscorep r_{x'}) \tenscorep (z^y \tenscorep r_{y'})\cong z^{x+y} \tenscorep \bigoplus_{\{a,b,g \in \mathbb{N} \ast \mathbb{N} \mid x'=ag, y'=\bar g b\}} r_{ab}. 
\end{equation*}
We note that it is known from \cite{MR2021}, that the representation category of $\textup{U}_{\zeta}^+(F)$ is equivalent to that of the bosonization $\textup{U}_{\zeta}^+(F) \rtimes \mathbb{T}$, thus yielding complete knowledge of the representation category for a diagonal $F$, and in particular, for $\textup{U}_{\zeta}^+(n)$. Collecting these together, we end this article with the following theorem.

\begin{theorem}
Let $F \in \mathrm{GL}(n,\mathbb{C})$ be diagonal and admissible. Then the braided free unitary quantum group $\textup{U}_{\zeta}^+(F)$ has irreducible representations $r_{(x,x')}$, $x \in \mathbb{Z}$, $x' \in \mathbb{N} \ast \mathbb{N}$ such that any irreducible representation is unitarily equivalent to exactly one of these and moreover they satisfy the fusion rule
\begin{equation*}
r_{(x,x')} \tenscorep r_{(y,y')} \cong \bigoplus_{\{a,b,g \in \mathbb{N} \ast \mathbb{N} \mid x'=ag, y'=\bar g b\}} r_{(x+y, ab)}. 
\end{equation*}
\end{theorem}

\section{Symmetries of graph \texorpdfstring{$\textup{C}^*$}{}-algebras}\label{graph} In this section, we come to the main theme of this paper, that of braided symmetries of graph $\textup{C}^*$\nobreakdash-algebras, which rely on the results obtained in the previous sections. We emphasize, however, that almost all the results are modelled on previous works in the unbraided case, as described, for instance, in \cites{JM2021,JM2018} (see also \cite{sw2018}) and we follow the presentations therein. We essentially adapt the techniques developed in the cited references to our braided setting; our contribution is the observation that all the constructs in \cite{JM2018} are $\mathbb{T}$\nobreakdash-equivariant under the generalized gauge action as described below. Having said so, we briefly recall the basic definitions, see \cite{rae2005} for more details. Let $E=(E^0,E^1,r,s)$ be a directed graph. Explicitly, this means that we have the set of vertices $E^0$, the set of edges $E^1$, the range and source maps $r,s : E^1 \rightarrow E^0$, respectively. An edge $e \in E^1$ ``goes from'' its source $s(e) \in E^0$ to its range $r(e) \in E^0$. Usually, the sets $E^1$ and $E^0$ are taken to be countable. A vertex $v \in E^0$ is called a sink (respectively, regular) if $s^{-1}(v)=\{e \in E^1 \mid s(e)=v\}$ is empty (respectively, non-empty and finite). A graph $E$ is row-finite if each of its vertices is either a sink or regular. A path $\alpha$ is a finite sequence $\alpha=e_1\dots e_l$ of edges satisfying $r(e_i)=s(e_{i+1})$ for $i=1,\dots,l-1$. $l$ is called the length of the path, denoted $|\alpha|$, $s(e_1)$ is called the source of the path, denoted $s(\alpha)$ and $r(e_l)$ is called the range of the path, denoted $r(\alpha)$. Now we recall the definition of the graph $\textup{C}^*$\nobreakdash-algebra corresponding to $E$. 

\begin{definition}
Let $E=(E^0,E^1,r,s)$ be a directed graph. The graph $\textup{C}^*$\nobreakdash-algebra $\textup{C}^*(E)$ is the universal $\textup{C}^*$\nobreakdash-algebra generated by families of projections $\{P_v \mid v \in E^0\}$ and partial isometries $\{S_e \mid e \in E^1\}$ subject to the following relations:
\begin{enumerate}
    \item for $v, w \in E^0$, with $v\neq w$, $P_vP_w=0$;
    \item for $e, f \in E^1$, with $e \neq f$, $S^*_eS_f=0$;
    \item for $e \in E^1$, $S^*_eS_e=P_{r(e)}$;
    \item for $e \in E^1$, $S_eS^*_e \leq P_{s(e)}$;
    \item for $v \in E^0$ regular, $P_v=\sum_{e \in s^{-1}(v)}S_eS^*_e$.
\end{enumerate}
\end{definition}

Let $E=(E^0,E^1,r,s)$ be a directed graph and $\textup{C}^*(E)$ be the corresponding graph $\textup{C}^*$\nobreakdash-algebra. $\textup{C}^*(E)$ comes equipped with a natural $\mathbb{T}$\nobreakdash-$\textup{C}^*$\nobreakdash-algebra structure $\rho^{gauge} : \textup{C}^*(E) \rightarrow \textup{C}^*(E) \otimes \textup{C}(\mathbb{T})$, called the gauge action of $\mathbb{T}$ and defined as follows: for $z \in \mathbb{T}$, $v \in E^0$, and $e \in E^1$, $\rho^{gauge}_z(P_v)=P_v$ and $\rho^{gauge}_z(S_e)=zS_e$. For finite graphs $E$, we can slightly generalize the gauge action as follows. 

\begin{definition}\label{gengauge}
Let $\#(E^0)=m$, $\#(E^1)=n$ and $(d_1,\dots,d_n) \in \mathbb{Z}^n$. The generalized gauge action is given by $\rho^{\textup{C}^*(E)}_z(P_{v_i})=P_{v_i}$ and $\rho^{\textup{C}^*(E)}_z(S_{e_j})=z^{d_j}S_{e_j}$ for $z \in \mathbb{T}$, $v_1,\dots,v_m \in E^0$ and $e_1,\dots,e_n \in E^1$. 
\end{definition}

\begin{remark}
For the rest of the section, we only consider finite graphs $E$ without sinks, use the generalized gauge action to equip $\textup{C}^*(E)$ with the structure of a $\mathbb{T}$\nobreakdash-$\textup{C}^*$\nobreakdash-algebra and abuse notation to write $P_i$ instead of $P_{v_i}$, $S_j$ instead of $S_{e_j}$. Furthermore, for a path $\alpha=e_1\dots e_l$, we shall write $S_{\alpha}$ for $S_1\dots S_l$. We will also use the fact repeatedly that $\textup{C}^*(E)$ is the closed linear span of $S_{\alpha}S_{\beta}^*$, where $\alpha, \beta$ are paths in $E$.
\end{remark}

We recall that a $\textup{C}^*$\nobreakdash-dynamical $\mathbb{R}$-system consists of a $\textup{C}^*$\nobreakdash-algebra $X$ and a strongly continuous homomorphism $\sigma : \mathbb{R} \rightarrow \mathrm{Aut}(X)$ of $\mathbb{R}$ in the group of automorphisms of $X$. A self-adjoint operator $x \in \mathcal{L}(\mathcal{H})$ on a finite dimensional Hilbert space $\mathcal{H}$ defines a $\textup{C}^*$\nobreakdash-dynamical $\mathbb{R}$-system $(\mathcal{L}(\mathcal{H}), \mathbb{R}, \sigma)$ which is given by $\sigma_{t}(a)=e^{itx}ae^{-itx}$, $a \in \mathcal{L}(\mathcal{H})$. For such a dynamical system, it is well-known that at any inverse temperature $\beta \in \mathbb{R}$, the unique thermal equilibrium state is given by the Gibbs state 
\begin{equation*}
\omega_{\beta}(a) = \frac{{\mathrm{Tr}}(e^{-\beta x}a)}{{\mathrm{Tr}}(e^{-\beta x})}, \quad a \in X.
\end{equation*} 
For a general $\textup{C}^*$\nobreakdash-dynamical $\mathbb{R}$-system $(X, \mathbb{R},\sigma)$, the generalization of the Gibbs states are the $\mathrm{KMS}$ (Kubo-Martin-Schwinger) states. 

\begin{definition}\cite{br}
A $\mathrm{KMS}$ state for a $\textup{C}^*$\nobreakdash-dynamical $\mathbb{R}$-system $(X,\mathbb{R},\sigma)$ at an inverse temperature $\beta \in \mathbb{R}$ is a state $\tau$ on $X$ that satisfies the $\mathrm{KMS}$ condition given by
\begin{equation}
\tau(ab)=\tau(b\sigma_{i\beta}(a)),
\end{equation}
for $a,b$ in a norm-dense subalgebra $X_{an}$ of $X$ called the algebra of analytic elements of the dynamical system $(X,\mathbb{R},\sigma)$. 
\end{definition}

In this paper, we shall only consider $\mathrm{KMS}$ states on graph $\textup{C}^*$\nobreakdash-algebras with respect to the canonical gauge action. To that end, let $E$ be a finite, directed graph without sinks. We denote the vertex matrix (the $ij$-th entry of which is the number of edges between the $i$-th and $j$-th vertices) by $D$ and the spectral radius of the vertex matrix by $\rho(D)$. This $\rho(D)$ is called the critical inverse temperature \cite{JM2021} and we shall be content with the existence of $\mathrm{KMS}$ states at this critical inverse temperature (see also \cite{kw2013}).

\begin{proposition} \label{exist1}
Let $E=(E^0,E^1,r,s)$ be a finite, directed graph without sinks. Then the graph $\textup{C}^*$\nobreakdash-algebra $\textup{C}^*(E)$ admits a $\mathrm{KMS}_{\log(\rho(D))}$ state, say $\tau_E$ if and only if $\rho(D)$ is an eigenvalue of $D$ such that there is an eigenvector with all entries nonnegative. Furthermore, whenever $\tau_E$ exists, we have the following.
\begin{enumerate}
    \item The vector $(\tau_E(P_{1}),\dots,\tau_E(P_m))$ is an eigenvector corresponding to the eigenvalue $\rho(D)$ of the vertex matrix. Here $P_i$ is the projection corresponding to the vertex $v_i$, $i=1,\dots,m$ (see Definition \textup{\ref{gengauge}}).
    \item The $\mathrm{KMS}$ state $\tau_E$ is given by
    \[
    \tau_E(S_{\alpha}S_{\beta}^*)=\delta_{\alpha\beta}\frac{1}{\rho(D)^{|\alpha|}}\tau_E(P_{r(\alpha)}).
    \]
    Here $|\alpha|$ and $r(\alpha)$ are the length and the range of the path $\alpha$, respectively. The symbol $\delta_{\alpha\beta}$ has the obvious meaning: it vanishes when the two paths $\alpha$ and $\beta$ are different and takes value $1$ when $\alpha$ coincides with $\beta$.
    \item The $\mathrm{KMS}$ state $\tau_E$ is $\mathbb{T}$\nobreakdash-equivariant for the generalized gauge action.
\end{enumerate}
\end{proposition}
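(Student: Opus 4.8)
The plan is to extract the ``only if'' direction together with the explicit formulas in items~(1) and~(2) from a single use of the $\mathrm{KMS}$ condition, to take the harder ``if'' direction from the literature, and then to read off the $\mathbb{T}$-equivariance in item~(3) directly from formula~(2).

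First I would set $\beta=\log(\rho(D))$ and view the gauge action as the $\mathbb{R}$-dynamics $\sigma$ with $\sigma_t(S_e)=\mathrm{e}^{it}S_e$ and $\sigma_t(P_v)=P_v$, so that $\mathrm{span}\{S_\alpha S_\beta^*\}$ is a norm-dense, $\sigma$-invariant $*$-subalgebra of analytic elements with $\sigma_z(S_\alpha S_\beta^*)=z^{|\alpha|-|\beta|}S_\alpha S_\beta^*$ for $z\in\mathbb{C}^\times$; in particular $\sigma_{i\beta}(S_e)=\rho(D)^{-1}S_e$. Assume $\tau_E$ is a $\mathrm{KMS}_\beta$ state. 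As $\mathrm{KMS}$ states are $\sigma$-invariant (\cite{br}), $\tau_E$ kills elements of nonzero gauge degree, so $\tau_E(S_\alpha S_\beta^*)=0$ when $|\alpha|\ne|\beta|$. When $|\alpha|=|\beta|$, write $\alpha=e\alpha'$, $\beta=f\beta'$ and apply $\tau_E(ab)=\tau_E(b\,\sigma_{i\beta}(a))$ with $a=S_e$, $b=S_{\alpha'}S_{\beta'}^*S_f^*$: this gives $\tau_E(S_\alpha S_\beta^*)=\rho(D)^{-1}\tau_E(S_{\alpha'}S_{\beta'}^*S_f^*S_e)$, which is $0$ if $e\ne f$ (since $S_f^*S_e=0$) and equals $\rho(D)^{-1}\tau_E(S_{\alpha'}S_{\beta'}^*)$ if $e=f$ (using $S_e^*S_e=P_{r(e)}$ and $S_{\beta'}^*P_{r(e)}=S_{\beta'}^*$). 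Iterating along the two paths yields $\tau_E(S_\alpha S_\beta^*)=\delta_{\alpha\beta}\,\rho(D)^{-|\alpha|}\,\tau_E(P_{r(\alpha)})$, which is item~(2). Inserting this into the relation $P_v=\sum_{e\in s^{-1}(v)}S_eS_e^*$, which holds at every vertex of a finite sinkless graph, gives $\rho(D)\,\tau_E(P_{v_i})=\sum_j D_{ij}\,\tau_E(P_{v_j})$, so $(\tau_E(P_{v_1}),\dots,\tau_E(P_{v_m}))$ is an eigenvector of the vertex matrix $D$ for $\rho(D)$, with nonnegative entries because $\tau_E$ is a state. This proves item~(1) and the necessity of the stated condition.

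For the ``if'' direction I would start from a nonnegative eigenvector $\kappa=(\kappa_1,\dots,\kappa_m)$ of $D$ for $\rho(D)$, rescaled so that $\sum_i\kappa_i=1$, and define $\tau_E$ on the dense $*$-subalgebra by $\tau_E(S_\alpha S_\beta^*)=\delta_{\alpha\beta}\,\rho(D)^{-|\alpha|}\,\kappa_{r(\alpha)}$. To see that this extends to a $\mathrm{KMS}_\beta$ state, decompose $\textup{C}^*(E)^{\mathbb{T}}=\overline{\bigcup_k\mathcal{F}_k}$ with $\mathcal{F}_k=\mathrm{span}\{S_\alpha S_\beta^*:|\alpha|=|\beta|=k\}$; the eigenvector equation $D\kappa=\rho(D)\kappa$ is exactly the compatibility that allows the prescribed values to glue into a state on the AF-algebra $\textup{C}^*(E)^{\mathbb{T}}$ (the inclusions $\mathcal{F}_k\hookrightarrow\mathcal{F}_{k+1}$ being governed by the relation $P_v=\sum_{e\in s^{-1}(v)}S_eS_e^*$), and composing with the canonical faithful conditional expectation $\textup{C}^*(E)\to\textup{C}^*(E)^{\mathbb{T}}$ produces $\tau_E$; the $\mathrm{KMS}_\beta$ identity is then a direct verification on the generators $S_\alpha S_\beta^*$. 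Since all of this is classical, I would simply invoke \cite{JM2021} and \cite{kw2013} here rather than reproduce it.

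Item~(3) is then immediate: for the generalized gauge action with exponents $(d_1,\dots,d_n)$ one has $\rho^{\textup{C}^*(E)}_z(S_\alpha S_\beta^*)=z^{N(\alpha)-N(\beta)}S_\alpha S_\beta^*$, where $N(\alpha)$ denotes the sum of the $d_j$ over the edges traversed by $\alpha$ (Definition~\ref{gengauge}). By item~(2) the only surviving terms of $\tau_E$ have $\alpha=\beta$, and then $N(\alpha)-N(\beta)=0$, so in every case $(\tau_E\otimes\mathrm{id}_{\textup{C}(\mathbb{T})})\rho^{\textup{C}^*(E)}(S_\alpha S_\beta^*)=\tau_E(S_\alpha S_\beta^*)\,1_{\textup{C}(\mathbb{T})}$; $\mathbb{T}$-equivariance follows by linearity, density and continuity. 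I expect the real obstacle to be the ``if'' direction — the construction of $\tau_E$ and the check of the $\mathrm{KMS}$ identity — while the part we genuinely need later, namely item~(3), falls out for free once formula~(2) is established.
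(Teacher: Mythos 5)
Your proposal is correct, and for the one part the paper actually proves in detail --- item~(3) --- it is the same argument: by formula~(2) only the terms with $\alpha=\beta$ survive, and those are of gauge degree zero, so $(\tau_E\otimes\mathrm{id}_{\textup{C}(\mathbb{T})})\rho^{\textup{C}^*(E)}(S_\alpha S_\beta^*)=\tau_E(S_\alpha S_\beta^*)1_{\textup{C}(\mathbb{T})}$, and one concludes by linearity, density and continuity. Where you diverge is in the remainder: the paper disposes of the ``if and only if'' statement and of items~(1) and~(2) with a single citation of \cite{JM2021}, whereas you re-derive the necessity direction and both formulas from the $\mathrm{KMS}$ condition itself (gauge-invariance killing the off-degree terms, the peeling identity $\tau_E(S_\alpha S_\beta^*)=\rho(D)^{-1}\delta_{ef}\tau_E(S_{\alpha'}S_{\beta'}^*)$, and then $P_v=\sum_{e\in s^{-1}(v)}S_eS_e^*$ giving the eigenvector relation, with nonnegativity and nontriviality coming from $\sum_i\tau_E(P_{v_i})=1$), delegating only the sufficiency direction to \cites{JM2021,kw2013}. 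This buys a self-contained and checkable account of exactly the facts used later (items~(2) and~(3)), at the cost of a little length; your sketches are sound. Two small caveats: your appeal to ``$\mathrm{KMS}$ states are $\sigma$-invariant'' is automatic only for $\beta\neq 0$, so in the degenerate case $\rho(D)=1$ one needs the usual convention that $\mathrm{KMS}_0$ states are the $\sigma$-invariant traces (this convention is implicit in \cite{JM2021} as well); and in the sufficiency sketch one should say a word about why the functional composed with the conditional expectation actually satisfies the $\mathrm{KMS}$ identity on all of the spanning set, not just on the fixed-point algebra --- but since you invoke the literature for that direction, as the paper does, this is not a gap.
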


\begin{proof}
The demonstrations for the first three conclusions may be found in \cite{JM2021}. For the last, we simply compute. Before that, let us write $d_{\alpha}$ for $d_1+\dots + d_l$, where $\alpha=e_1\dots e_l$ is a path. Then for $z \in \mathbb{T}$, $\rho_z^{\textup{C}^*(E)}(S_{\alpha})=z^{d_{\alpha}}S_{\alpha}$, so that 
\allowdisplaybreaks{
\begin{align*}
\tau_E(\rho_z^{\textup{C}^*(E)}(S_{\alpha}S^*_{\beta}))=&{}\tau_E(z^{d_{\alpha}-d_{\beta}}S_{\alpha}S^*_{\beta})\\
={}&z^{d_{\alpha}-d_{\beta}}\tau_E(S_{\alpha}S^*_{\beta})\\
={}&z^{d_{\alpha}-d_{\beta}}\delta_{\alpha\beta}\frac{1}{\rho(D)^{|\alpha|}}\tau_E(P_{r(\alpha)})\\
={}&\delta_{\alpha\beta}\frac{1}{\rho(D)^{|\alpha|}}\tau_E(P_{r(\alpha)})=\tau_E(S_{\alpha}S^*_{\beta}).
\end{align*}
}The fourth equality can be argued in the following way. If the two paths $\alpha$ and $\beta$ coincide, then $z^{d_{\alpha}-d_{\beta}}$ vanishes. And if $\alpha$ and $\beta$ are different, then the right-hand side of the third equality vanishes.
\end{proof}

\begin{remark}\label{dagger}
The class of graphs satisfying the condition of the Proposition \ref{exist1} is quite big. For example, it contains all the regular graphs. Let us denote the following condition by a $(\dagger)$
\begin{align*}\tag{$\dagger$}
&\rho(D) \text{ is an eigenvalue of } D \text{ such that there is an eigenvector with all }\\
&\text{entries nonnegative} 
\end{align*} 
and refer to the graphs satisfying it as graphs satisfying condition $(\dagger)$. Thus a $\mathrm{KMS}_{\log(\rho(D))}$ state exists if and only if the graph satisfies $(\dagger)$.
\end{remark}

We now recall a few definitions.

\begin{definition}\label{action}\cite{R2021}
Let $G=(\textup{C}(G),\rho^{\textup{C}(G)},\Delta_G)$ be a braided compact quantum group. 
\begin{enumerate}
    \item An action of $G$ on a $\mathbb{T}$\nobreakdash-$\textup{C}^*$\nobreakdash-algebra $(B,\rho^B)$ is a morphism $\eta^B \in \mathrm{Mor}^{\mathbb{T}}(B, B \boxtimes_{\zeta} \textup{C}(G))$ such that
    \begin{enumerate}
    \item $(\mathrm{id}_B \boxtimes_{\zeta} \Delta_G)\circ \eta^B=(\eta^B \boxtimes_{\zeta} \mathrm{id}_{\textup{C}(G)})\circ \eta^B$ (coassociativity);
    \item $\eta^B(B)(1_B \boxtimes_{\zeta} \textup{C}(G))=B \boxtimes_{\zeta} \textup{C}(G)$ (Podle\'s condition).
    \end{enumerate}
    \item Let $(B,\rho^B)$ be a $\mathbb{T}$\nobreakdash-$\textup{C}^*$\nobreakdash-algebra equipped with a $G$\nobreakdash-action $\eta^B \in \mathrm{Mor}^{\mathbb{T}}(B,B\boxtimes_{\zeta} \textup{C}(G))$. A $\mathbb{T}$\nobreakdash-equivariant state $f : B \rightarrow \mathbb{C}$ on $B$ is one that satisfies 
    \[(f \otimes \mathrm{id}_{\textup{C}(\mathbb{T})})\rho^B(b)=f(b)1_{\textup{C}(\mathbb{T})} \text{ for all }b \in B.\]
    Such an $f : B \rightarrow \mathbb{C}$ is said to be preserved under the $G$\nobreakdash-action $\eta^B$ if 
    \[(f \boxtimes_{\zeta} \mathrm{id}_{\textup{C}(G)})\eta^B(b)=f(b)1_{\textup{C}(G)} \text{ for all }b \in B.\]
    \item A $G$\nobreakdash-action $\eta^B \in \mathrm{Mor}^{\mathbb{T}}(B,B\boxtimes_{\zeta} \textup{C}(G))$ of $G$ on a $\mathbb{T}$\nobreakdash-$\textup{C}^*$\nobreakdash-algebra $(B,\rho^B)$ is said to be faithful if the $*$-algebra generated by $\{(f \boxtimes_{\zeta} \mathrm{id}_{\textup{C}(G)})\eta^B(B) \mid f : B \rightarrow \mathbb{C} \text{ a } \mathbb{T}\text{-equivariant state}\}$ is norm-dense in $\textup{C}(G)$.
\end{enumerate}
\end{definition}

\begin{definition}
Let $E=(E^0,E^1,r,s)$ be a finite, directed graph without sinks and let $G=(\textup{C}(G),\rho^{\textup{C}(G)},\Delta_G)$ be a braided compact quantum group. A $G$\nobreakdash-action $\eta \in \mathrm{Mor}^{\mathbb{T}}(\textup{C}^*(E), \textup{C}^*(E) \boxtimes_{\zeta} \textup{C}(G))$ is called linear if there exists $q=(q_{ij})_{1\leq i,j\leq n} \in M_n(\textup{C}(G))$ such that for $1 \leq j \leq n$, we have $\eta(S_j)=\sum_{i=1}^nj_1(S_i)j_2(q_{ij})$.
\end{definition}

We then observe that, by the $\mathbb{T}$\nobreakdash-equivariance of $\eta$ and the homogeneity of $S_j$, $q_{ij}$ is homogeneous of degree $d_j-d_i$ for $1 \leq i,j \leq n$. Furthermore, the coassociativity of $\eta$ yields that $\Delta_G(q_{ij})=\sum_{k=1}^nj_1(q_{ik})j_2(q_{kj})$. We summarize these observations in a lemma.

\begin{lemma}\label{linearstar}
Let $\eta \in \mathrm{Mor}^{\mathbb{T}}(\textup{C}^*(E), \textup{C}^*(E) \boxtimes_{\zeta} \textup{C}(G))$ be a linear action of a braided compact quantum group $G=(\textup{C}(G),\rho^{\textup{C}(G)},\Delta_G)$ on $\textup{C}^*(E)$. Let $q=(q_{ij})_{1\leq i,j\leq n} \in M_n(\textup{C}(G))$ be such that for $1 \leq j \leq n$, $\eta(S_j)=\sum_{i=1}^nj_1(S_i)j_2(q_{ij})$. Then
\begin{enumerate}
    \item for $1 \leq i,j \leq n$ and $z \in \mathbb{T}$, $\rho^{\textup{C}^*(E)}_z(q_{ij})=z^{d_j-d_i}q_{ij}$;
    \item for $1 \leq i,j \leq n$, $\Delta_G(q_{ij})=\sum_{k=1}^nj_1(q_{ik})j_2(q_{kj})$;
    \item for $1 \leq j \leq n$, $\eta(S^*_j)=\sum_{i=1}^n j_1(S^*_i)j_2((\overline{q}_{\zeta})_{ij})$,
\end{enumerate} where $\overline{q}_{\zeta}=(\zeta^{d_i(d_j-d_i)}q^*_{ij})_{1\leq i,j \leq n}$.
\end{lemma}

\begin{proof}
Only the last conclusion needs to be demonstrated. To that end, we observe that for each $1 \leq j \leq n$,
\allowdisplaybreaks{
\begin{align*}
\eta(S^*_j)=\eta(S_j)^*=\sum_{i=1}^nj_2(q^*_{ij})j_1(S_i^*)
={}&\sum_{i=1}^n\zeta^{-d_i(d_i-d_j)}j_1(S_i^*)j_2(q^*_{ij})\\
={}&\sum_{i=1}^n\zeta^{-d_i(d_i-d_j)}j_1(S_i^*)j_2(q^*_{ij})\\
={}&\sum_{i=1}^nj_1(S_i^*)j_2(\zeta^{d_i(d_j-d_i)}q^*_{ij})\\
={}&\sum_{i=1}^nj_1(S_i^*)j_2((\overline{q}_{\zeta})_{ij}),
\end{align*}
}which is what we wanted.
\end{proof}

\begin{definition}
Let $E=(E^0,E^1,r,s)$ be a finite, directed graph without sinks satisfying condition $(\dagger)$ (see Remark \ref{dagger}) and let $\tau_E$ be the $\mathrm{KMS}$ state on $\textup{C}^*(E)$. We define the category $\mathcal{C}(E,\tau_E)$ as follows. 
\begin{enumerate}
\item An object of $\mathcal{C}(E,\tau_E)$ is a pair $(G,\eta)$, where $G=(\textup{C}(G),\rho^{\textup{C}(G)},\Delta_G)$ is a braided compact quantum group, and $\eta \in \mathrm{Mor}^{\mathbb{T}}(\textup{C}^*(E), \textup{C}^*(E) \boxtimes_{\zeta} \textup{C}(G))$ is a $\tau_E$-preserving, linear faithful (see Definition \ref{action}) action of $G$ on $\textup{C}^*(E)$. 
\item Let $(G_1,\eta_1)$ and $(G_2,\eta_2)$ be two objects in $\mathcal{C}(E,\tau_E)$. A morphism $\phi : (G_1,\eta_1) \rightarrow (G_2,\eta_2)$ in $\mathcal{C}(E,\tau_E)$ is by definition a $\mathbb{T}$\nobreakdash-equivariant Hopf $*$-homomorphism $\phi : \textup{C}(G_2) \rightarrow \textup{C}(G_1)$ such that $(\mathrm{id}_{\textup{C}^*(E)} \boxtimes_{\zeta} \phi)\circ \eta_2=\eta_1$.
\end{enumerate} 
\end{definition}

\begin{definition}
A terminal object in $\mathcal{C}(E,\tau_E)$ is called the braided quantum symmetry group of the graph $\textup{C}^*$\nobreakdash-algebra $\textup{C}^*(E)$ and denoted $(\mathrm{Qaut}(\textup{C}^*(E)),\eta^E)$. 
\end{definition}

A priori, it is not clear that $(\mathrm{Qaut}(\textup{C}^*(E)),\eta^E)$ exists but as we shall see below, it indeed does (for the class of  graphs we consider, i.e., finite, directed, without sinks and satisfying the condition $(\dagger)$, so that a $\mathrm{KMS}_{\log(\rho(D))}$ state exists at the critical inverse temperature). This is the main theorem of this section and we shall step by step build up to its proof. Keeping the same notations as above, we recall that by Proposition \ref{exist1}, for $1 \leq i,j \leq n$, 
\[
\tau_E(S_iS^*_j)=\delta_{ij}\frac{1}{\rho(D)}\tau_E(P_{r(e_i)}).
\]Now as observed in \cite{JM2018}, for each $1 \leq i \leq n$, $\tau_E(P_i)\neq 0$, so that we can and in fact do, normalize $S_i$ to obtain $\tau_E(S_iS^*_j)=\delta_{ij}$. Let $\tilde{F}$ be the matrix $(\tau_E(S^*_iS_j))_{1 \leq i,j \leq n}$. Being a KMS state implies that $\tilde{F}$ is an invertible diagonal matrix. 

\begin{proposition}\label{matricial}
Let $\eta \in \mathrm{Mor}^{\mathbb{T}}(\textup{C}^*(E), \textup{C}^*(E) \boxtimes_{\zeta} \textup{C}(G))$ be a linear action of a braided compact quantum group $G=(\textup{C}(G),\rho^{\textup{C}(G)},\Delta_G)$ on $\textup{C}^*(E)$. Let $q=(q_{ij})_{1\leq i,j\leq n} \in M_n(\textup{C}(G))$ be such that for $1 \leq j \leq n$, $\eta(S_j)=\sum_{i=1}^nj_1(S_i)j_2(q_{ij})$. If the $G$\nobreakdash-action $\eta$ preserves $\tau_E$ then for each $1 \leq i,j \leq n$, the following relations hold. 
\begin{enumerate}
    \item $\sum_{k=1}^n\zeta^{d_k(d_j-d_i)}q_{ki}q^*_{kj}=\delta_{ij}$;
    \item $\sum_{k=1}^nq^*_{ki}\tilde{F}_{kk}q_{kj}=\tilde{F}_{ij}$.
\end{enumerate} 
\end{proposition}

\begin{proof}
With the normalization mentioned above, we observe that if the $G$\nobreakdash-action $\eta$ preserves $\tau_E$ then 
\[
(\tau_E \boxtimes_{\zeta} \mathrm{id}_{\textup{C}^*(E)})\eta(S_iS^*_j)=\delta_{ij}1_{\textup{C}(G)}
\]holds for each $1 \leq i,j \leq n$. Therefore, we first write $(\tau_E \boxtimes_{\zeta} \mathrm{id}_{\textup{C}^*(E)})\eta(S_iS^*_j)$ in terms of the $q_{ij}$ $(1 \leq i,j \leq n)$, for which we simply compute:
\allowdisplaybreaks{
\begin{align*}
(\tau_E \boxtimes_{\zeta} \mathrm{id}_{\textup{C}^*(E)})\eta(S_iS^*_j)
=&(\tau_E \boxtimes_{\zeta} \mathrm{id}_{\textup{C}^*(E)})\eta(S_i)\eta(S^*_j)\\
=&(\tau_E \boxtimes_{\zeta} \mathrm{id}_{\textup{C}^*(E)})(\sum_{k,l=1}^nj_1(S_k)j_2(q_{ki})j_1(S^*_l)j_2(\zeta^{d_l(d_j-d_l)}q^*_{lj}))\\
=&(\tau_E \boxtimes_{\zeta} \mathrm{id}_{\textup{C}^*(E)})(\sum_{k,l=1}^nj_1(S_kS^*_l)j_2(\zeta^{d_l(d_j-d_l)-d_l(d_i-d_k)}q_{ki}q^*_{lj}))\\
=&\sum_{k,l=1}^n\tau_E(S_kS^*_l)\zeta^{d_l(d_j-d_l)-d_l(d_i-d_k)}q_{ki}q^*_{lj}\\
=&\sum_{k,l=1}^n\delta_{kl}\zeta^{d_l(d_j-d_l)-d_l(d_i-d_k)}q_{ki}q^*_{lj}\\
=&\sum_{k=1}^n\zeta^{d_k(d_j-d_i)}q_{ki}q^*_{kj},
\end{align*}
}and so we obtain $\sum_{k=1}^n\zeta^{d_k(d_j-d_i)}q_{ki}q^*_{kj}=\delta_{ij}$. For the second conclusion, we observe that if the $G$\nobreakdash-action preserves $\tau_E$ then \[(\tau_E \boxtimes_{\zeta} \mathrm{id}_{\textup{C}^*(E)})\eta(S^*_iS_j)=\tau_E(S^*_iS_j)1_{\textup{C}(G)}\] holds for $1 \leq i,j \leq n$. Now 
\allowdisplaybreaks{
\begin{align*}
(\tau_E \boxtimes_{\zeta} \mathrm{id}_{\textup{C}^*(E)})\eta(S^*_iS_j)
=&(\tau_E \boxtimes_{\zeta} \mathrm{id}_{\textup{C}^*(E)})\eta(S^*_i)\eta(S_j)\\
=&(\tau_E \boxtimes_{\zeta} \mathrm{id}_{\textup{C}^*(E)})(\sum_{k,l=1}^nj_1(S^*_k)j_2(\zeta^{d_k(d_i-d_k)}q^*_{ki}))j_1(S_l)j_2(q_{lj})\\
=&(\tau_E \boxtimes_{\zeta} \mathrm{id}_{\textup{C}^*(E)})(\sum_{k,l=1}^nj_1(S^*_kS_l)j_2(\zeta^{d_k(d_i-d_k)+d_l(d_k-d_i)}q^*_{ki}q_{lj}))\\
=&\sum_{k,l=1}^n\tau_E(S^*_kS_l)\zeta^{d_k(d_i-d_k)+d_l(d_k-d_i)}q^*_{ki}q_{lj}\\
=&\sum_{k,l=1}^n\tilde{F}_{kl}\zeta^{d_k(d_i-d_k)+d_l(d_k-d_i)}q^*_{ki}q_{lj}\\
=&\sum_{k=1}^n\tilde{F}_{kk}q^*_{ki}q_{kj},
\end{align*}   
}and so we obtain $\sum_{k=1}^nq^*_{ki}\tilde{F}_{kk}q_{kj}=\tilde{F}_{ij}$. In the above computation, the last equality holds because $\tilde{F}$ is a diagonal matrix.
\end{proof}

\begin{proposition}\label{quotient}
Let $(G,\eta) \in \mathrm{Obj}(\mathcal{C}(E,\tau_E))$ be an object in the category $\mathcal{C}(E,\tau_E)$. Then the $\textup{C}^*$\nobreakdash-algebra $\textup{C}(G)$ is a quotient of the $\textup{C}^*$\nobreakdash-algebra $\textup{C}(\textup{U}_{\zeta}^{+}(F^{-1}))$, where $F$ is such that $F^*F=\tilde{F}$. 
\end{proposition}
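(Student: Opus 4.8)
The plan is to exhibit $\textup{C}(G)$ as a quotient of $\textup{C}(\textup{U}_{\zeta}^{+}(F^{-1}))$ by constructing a surjective $\mathbb{T}$-equivariant Hopf $*$-homomorphism between them, invoking the universal property of Theorem \ref{universal}. The input is the matrix $q=(q_{ij})_{1\le i,j\le n}$ of the linear action $\eta$. By Lemma \ref{linearstar}, $q$ is a corepresentation, $\Delta_G(q_{ij})=\sum_k j_1(q_{ik})j_2(q_{kj})$, and is $\mathbb{T}$-homogeneous, $\rho^{\textup{C}(G)}_z(q_{ij})=z^{d_j-d_i}q_{ij}$; by Proposition \ref{matricial}, $\tau_E$-invariance of $\eta$ yields the two families of identities $\sum_k\zeta^{d_k(d_j-d_i)}q_{ki}q^*_{kj}=\delta_{ij}$ and $\sum_k q^*_{ki}\tilde{F}_{kk}q_{kj}=\tilde{F}_{ij}$. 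I would fix $F$ to be the positive square root of the positive, invertible, diagonal matrix $\tilde{F}$, so that $F^{*}F=\tilde{F}$ and both $F$ and $F^{-1}$ are diagonal, hence admissible by Remark \ref{diagonal}, with admissibility data $\underline{d}=(d_1,\dots,d_n)$, $\underline{d}'=-\underline{d}$, $d=0$.

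Next I would reinterpret the two relation families. A direct computation, using that $F$ is diagonal and positive and the conventions of Definition \ref{conj}, shows that the first family says exactly $\overline{q}_{\zeta}^{*}\,\overline{q}_{\zeta}=I$, that the second says exactly $(FqF^{-1})^{*}(FqF^{-1})=I$, and that $\overline{(FqF^{-1})}_{\zeta}=F\,\overline{q}_{\zeta}\,F^{-1}$. Since $q$ is a corepresentation, so are $v:=FqF^{-1}$ (scalars pass through $j_1$ and $j_2$) and $\overline{q}_{\zeta}$, the latter by exactly the manipulation performed just before Definition \ref{conj}, which uses only the corepresentation identity and the braided commutation relations, not unitarity. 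The crucial input is then that an isometric finite-dimensional corepresentation of a braided compact quantum group is automatically unitary; granting this, $v$ and $\overline{q}_{\zeta}$ are (by Proposition \ref{equivrep}) unitary representations of $G$, $v$ is $\mathbb{T}$-homogeneous with the same degrees as $q$, and $\overline{v}_{\zeta}=F\,\overline{q}_{\zeta}\,F^{-1}$ is equivalent, through the degree-zero intertwiner $F^{-1}$, to the unitary representation $\overline{q}_{\zeta}$. So $(\mathcal{H},V,v)$ is a finite-dimensional unitary representation of $G$ whose $\zeta$-conjugate is equivalent to a unitary one, and, reading off from the discussion preceding Definition \ref{admissible}, the pertinent admissible matrix implementing that equivalence is precisely $F^{-1}$.

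Theorem \ref{universal} now gives a unique $\mathbb{T}$-equivariant Hopf $*$-homomorphism $\phi\colon\textup{C}(\textup{U}_{\zeta}^{+}(F^{-1}))\to\textup{C}(G)$ with $\phi(u_{ij})=v_{ij}$, i.e., $\phi(u)=FqF^{-1}$. It remains to see that $\phi$ is surjective. Its range is a $\textup{C}^*$-subalgebra of $\textup{C}(G)$ containing every $q_{ij}$, as the $F_{ii}$ are invertible scalars, so it is enough that the $q_{ij}$ generate $\textup{C}(G)$; this is where faithfulness of $\eta$ is used. Writing $\textup{C}^*(E)$ as the closed linear span of the $S_\alpha S_\beta^{*}$ and expanding $\eta(S_\alpha)$ and $\eta(S_\beta^{*})$ with the braided commutation relations, one finds $\eta(S_\alpha S_\beta^{*})=\sum_{\gamma,\delta}j_1(S_\gamma S_\delta^{*})j_2(w_{\gamma\delta})$ with each $w_{\gamma\delta}$ a scalar multiple of a product of entries of $q$ and $\overline{q}_{\zeta}$; hence for every $\mathbb{T}$-equivariant state $f$ on $\textup{C}^*(E)$ the element $(f\boxtimes_{\zeta}\mathrm{id}_{\textup{C}(G)})\eta(S_\alpha S_\beta^{*})$ lies in the $*$-algebra generated by the $q_{ij}$. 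By Definition \ref{faithful} that $*$-algebra is norm-dense in $\textup{C}(G)$, so $\phi$ is onto and $\textup{C}(G)$ is a quotient of $\textup{C}(\textup{U}_{\zeta}^{+}(F^{-1}))$.

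The step I expect to require the most care is the passage from the one-sided identities of Proposition \ref{matricial} to genuine unitarity, namely the claim that an isometric corepresentation is unitary. I would handle it by transporting the statement, via the bosonization of Section \ref{bosonization} — under which finite-dimensional representations of a braided compact quantum group $G$ correspond to those of the ordinary compact quantum group $G\rtimes\mathbb{T}$ — to the classical fact that an isometric finite-dimensional corepresentation of an ordinary compact quantum group is unitary, which in turn follows from complete reducibility of corepresentations together with the invariance of the Haar state.
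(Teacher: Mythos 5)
Your proposal is correct and follows essentially the same route as the paper: you rewrite the $\tau_E$-preservation identities as the statements that $\overline{q}_{\zeta}$ and $FqF^{-1}$ are isometries with $\overline{(FqF^{-1})}_{\zeta}=F\,\overline{q}_{\zeta}\,F^{-1}$, pass to the bosonization $G\rtimes\mathbb{T}$ to upgrade these one-sided relations to unitarity, and then invoke the universal property of $\textup{C}(\textup{U}_{\zeta}^{+}(F^{-1}))$. The only difference is cosmetic: you phrase the final step through Theorem \ref{universal} and spell out surjectivity via faithfulness of the action, a point the paper leaves implicit.
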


\begin{proof}
Let the linear action $\eta$ of $G$ be given on the generators $S_j$ by $\eta(S_j)=\sum_{i=1}j_1(S_i)j_2(q_{ij})$, for $1 \leq j \leq n$. We now observe that the first relation of Proposition \ref{matricial}, i.e., $\sum_{k=1}^n\zeta^{d_k(d_j-d_i)}q_{ki}q^*_{kj}=\delta_{ij}$ can be written as $(\overline{q}_{\zeta})^*\overline{q}_{\zeta}=I_n$, where $q=(q_{ij})_{1 \leq i,j \leq n}$, $\overline{q}_{\zeta}=(\zeta^{d_i(d_j-d_i)}q^*_{ij})_{1 \leq i,j \leq n}$ and $(\overline{q}_{\zeta})^*$ denotes the usual conjugate i.e., bar-transpose. Indeed,
\allowdisplaybreaks{
\begin{align*}
\sum_{k=1}^n(\overline{q}_{\zeta}^*)_{ik}(\overline{q}_{\zeta})_{kj}={}&\sum_{k=1}^n\zeta^{-d_k(d_i-d_k)}q_{ki}\zeta^{d_k(d_j-d_k)}q^*_{kj}\\
={}&\sum_{k=1}^n\zeta^{-d_k(d_i-d_k)+d_k(d_j-d_k)}q_{ki}q^*_{kj}=\sum_{k=1}^n\zeta^{d_k(d_j-d_i)}q_{ki}q^*_{kj},
\end{align*}
}yielding the required relation. The second relation obtained in Proposition \ref{matricial} $\sum_{k=1}^nq^*_{ki}\tilde{F}_{kk}q_{kj}=\tilde{F}_{ij}$ too can be put in a more compact form by writing $q^*\tilde{F}q=\tilde{F}$. Indeed,
\allowdisplaybreaks{
\begin{align*}
(q^*\tilde{F}q)_{ij}=\sum_{k,l=1}^n(q^*)_{ik}\tilde{F}_{kl}q_{lj}=\sum_{k,l=1}^nq^*_{ki}\delta_{kl}\tilde{F}_{kk}q_{kj}=\sum_{k=1}^nq^*_{ik}\tilde{F}_{kk}q_{kj},    
\end{align*}   
}which is what we wanted. Next, we observe that each of the diagonal entries of $\tilde{F}$ is positive; thus there is indeed an $F$ such that $F^*F=\tilde{F}$. Now we consider $FqF^{-1}$ and compute
\allowdisplaybreaks{
\begin{align*}
(FqF^{-1})^*(FqF^{-1})=(F^*)^{-1}q^*F^*FqF^{-1}=(F^*)^{-1}q^*\tilde{F}qF^{-1},
\end{align*}   
}which says that the relation $q^*\tilde{F}q=F$ is equivalent to $(FqF^{-1})^*(FqF^{-1})=I_n$. Letting $q'=FqF^{-1}$, we now find out what $F^{-1}\overline{q'}_{\zeta}F$ is.
\allowdisplaybreaks{
\begin{align*}
(F^{-1}\overline{q'}_{\zeta}F)_{ij}=\sum_{k,l=1}^nF^{-1}_{ik}(\overline{q'}_{\zeta})_{kl}F_{lj}={}&F^{-1}_{ii}(\overline{q'}_{\zeta})_{ij}F_{jj}\\
={}&F^{-1}_{ii}(\zeta^{d_i(d_j-d_i)}(FqF^{-1})^*_{ij})F_{jj}\\
={}&F^{-1}_{ii}(\zeta^{d_i(d_j-d_i)}F_{ii}q^*_{ij}F^{-1}_{jj})F_{jj}\\
={}&\zeta^{d_i(d_j-d_i)}q^*_{ij},   
\end{align*}   
}i.e., we have $F^{-1}(\overline{q'}_{\zeta})F=\overline{q}_{\zeta}$ and therefore, the relation $(\overline{q}_{\zeta})^*\overline{q}_{\zeta}=I_n$ is equivalent to \[(F^{-1}(\overline{q'}_{\zeta})F)^*F^{-1}(\overline{q'}_{\zeta})F=I_n.\]  Taken together, we see that the matrix $q$ satisfies the relations $(F^{-1}(\overline{q'}_{\zeta})F)^*F^{-1}(\overline{q'}_{\zeta})F=I_n$ and $q'^*q'=I_n$. These two relations still hold after passing to the bosonization $G \rtimes \mathbb{T}$ of $G$. Since $G \rtimes \mathbb{T}$ is a compact quantum group, we obtain the remaining two relations i.e., $F^{-1}(\overline{q'}_{\zeta})F(F^{-1}(\overline{q'}_{\zeta})F)^*=I_n$ and $q'q'^*=I_n$. Taking the four sets of relations together, we have that $q'$ and $F^{-1}(\overline{q'}_{\zeta})F$ are unitaries and the universal property of the $\textup{C}^*$\nobreakdash-algebra $\textup{C}(\textup{U}_{\zeta}^{+}(F^{-1}))$ yields the conclusion.
\end{proof}

\begin{definition}\label{anothercat}
We now define another category $\mathcal{C}'(E)$ as follows. An object of $\mathcal{C}'(E)$ is a triple $(X,\rho^X,\eta^X)$ which consists of 
\begin{enumerate}
    \item a $\mathbb{T}$\nobreakdash-$\textup{C}^*$\nobreakdash-algebra $(X,\rho^X)$ generated by $\{t_{ij}\}_{1 \leq i,j \leq n}$ such that the two matrices $FtF^{-1}=(F_{ii}t_{ij}F^{-1}_{jj})_{1 \leq i,j \leq n}$ and $\overline{t}_{\zeta}=(\zeta^{d_i(d_j-d_i)}t^*_{ij})_{1 \leq i,j \leq n}$ are unitaries.
    \item a $\mathbb{T}$\nobreakdash-equivariant morphism $\eta^X \in \mathrm{Mor}^{\mathbb{T}}(\textup{C}^*(E), \textup{C}^*(E) \boxtimes_{\zeta} X)$ such that, for each $1 \leq j \leq n$, $\eta^X(S_j)=\sum_{i=1}^nj_1(S_i)j_2(t_{ij})$.
\end{enumerate}
Let $(X,\rho^X,\eta^X)$ and $(Y,\rho^Y,\eta^Y)$ be two objects in the category $\mathcal{C}'(E)$. A morphism $\phi : (X,\rho^X,\eta^X) \rightarrow (Y,\rho^Y,\eta^Y)$ in $\mathcal{C}'(E)$ is by definition a $\mathbb{T}$\nobreakdash-equivariant morphism $\phi : X \rightarrow Y$ such that $(\mathrm{id}_{\textup{C}^*(E)} \boxtimes_{\zeta} \phi)\circ \eta^X=\eta^Y$. 
\end{definition}

\begin{remark}
We remark that by the $\mathbb{T}$\nobreakdash-equivariance of $\eta$ and the homogeneity of $S_j$, $t_{ij}$ is homogeneous of degree $d_j-d_i$ for $1 \leq i,j \leq n$.
\end{remark}

\begin{lemma}
An initial object in the category $\mathcal{C}'(E)$ exists.
\end{lemma}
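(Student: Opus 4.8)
The plan is to construct the initial object directly as a universal $\textup{C}^*$\nobreakdash-algebra and then verify it carries the required structure. First I would let $X_0$ be the universal unital $\textup{C}^*$\nobreakdash-algebra generated by symbols $t_{ij}$ ($1 \leq i,j \leq n$) subject precisely to the relations that make $FtF^{-1}$ and $\overline{t}_{\zeta}=(\zeta^{d_i(d_j-d_i)}t^*_{ij})_{1 \leq i,j \leq n}$ unitaries; existence of this universal object follows exactly as in the construction of $\textup{C}(\textup{U}_{\zeta}^+(F^{-1}))$ after Definition \ref{braidedunitary}, since the relations force $\|t_{ij}\| \leq C$ for a uniform constant $C$ (because $F$ is a fixed invertible matrix). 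In fact, $X_0$ is nothing but $\textup{C}(\textup{U}_{\zeta}^+(F^{-1}))$ with $t_{ij}$ playing the role of the entries of $q'=FqF^{-1}$ — that is, the identification $t_{ij} \leftrightarrow (F u F^{-1})_{ij}$ matches the relations in Definition \ref{braidedunitary} (for $F^{-1}$) with those in Definition \ref{anothercat}. Then I would equip $X_0$ with the $\mathbb{T}$\nobreakdash-$\textup{C}^*$\nobreakdash-algebra structure $\rho^{X_0}_z(t_{ij}) = z^{d_j-d_i}t_{ij}$, which is well-defined because these relations are homogeneous (the same homogeneity check as in the proposition following Definition \ref{braidedunitary}, conjugating by the diagonal $F$ does not change degrees since $F$ is compatible in the sense of Definition \ref{admissible} with $\underline d' = -\underline d$, $d=0$).

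Next I would define the candidate coaction $\eta^{X_0} \in \mathrm{Mor}^{\mathbb{T}}(\textup{C}^*(E), \textup{C}^*(E) \boxtimes_{\zeta} X_0)$ on generators by $\eta^{X_0}(S_j) = \sum_{i=1}^n j_1(S_i)j_2(t_{ij})$ and $\eta^{X_0}(P_v) = j_1(P_v) = P_v \boxtimes_{\zeta} 1$. To show this is a well-defined $*$-homomorphism, I must check that the images satisfy the five defining relations of $\textup{C}^*(E)$; this is the technical heart of the argument. The relations $P_vP_w = 0$ ($v \neq w$) and $S_e^* S_f = 0$ ($e \neq f$), as well as $S_e^* S_e = P_{r(e)}$ and the Cuntz--Krieger sum relation $P_v = \sum_{e \in s^{-1}(v)}S_eS_e^*$, all follow from the unitarity of $\overline{t}_{\zeta}$ together with the commutation relation \eqref{commutation} between $j_1$ and $j_2$ — these are precisely the computations that appear (in the unbraided case, and for $q$ rather than $t$) in \cites{JM2018,JM2021}, now carried out with the $\zeta$\nobreakdash-twisted commutations tracked carefully, exactly in the spirit of Propositions \ref{matricial} and \ref{quotient}. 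The $\mathbb{T}$\nobreakdash-equivariance of $\eta^{X_0}$ is immediate from $\mathrm{deg}(S_j) = d_j$, $\mathrm{deg}(t_{ij}) = d_j - d_i$, so $\mathrm{deg}(j_1(S_i)j_2(t_{ij})) = d_i + (d_j - d_i) = d_j$ matches. Coassociativity of $\eta^{X_0}$ and the Podleś condition need not be part of the definition of an object of $\mathcal{C}'(E)$ (Definition \ref{anothercat} only asks for a $\mathbb{T}$\nobreakdash-equivariant morphism $\eta^X$ with the stated form), so I would not need to verify them here; if they are wanted, coassociativity reduces to $\Delta(t_{ij}) = \sum_k j_1(t_{ik})j_2(t_{kj})$, which holds by the corresponding property for $u$ transported through the diagonal $F$.

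Finally I would establish the universal property: given any object $(X, \rho^X, \eta^X)$ of $\mathcal{C}'(E)$ with generators $\{t'_{ij}\}$, the assignment $t_{ij} \mapsto t'_{ij}$ extends to a $*$-homomorphism $\phi \colon X_0 \to X$ by the universal property of $X_0$ (the generators $t'_{ij}$ of $X$ satisfy, by hypothesis, exactly the defining relations of $X_0$); this $\phi$ is $\mathbb{T}$\nobreakdash-equivariant because it respects degrees, and it satisfies $(\mathrm{id}_{\textup{C}^*(E)} \boxtimes_{\zeta} \phi) \circ \eta^{X_0} = \eta^X$ by checking on the generators $S_j$, where both sides send $S_j$ to $\sum_i j_1(S_i)j_2(t'_{ij})$. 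Uniqueness of $\phi$ is clear since the $t_{ij}$ generate $X_0$. The main obstacle I anticipate is the verification that $\eta^{X_0}$ is well-defined, i.e., that $\{j_1(P_v)\} \cup \{\sum_i j_1(S_i)j_2(t_{ij})\}$ satisfies the Cuntz--Krieger relations inside $\textup{C}^*(E) \boxtimes_{\zeta} X_0$; keeping track of the scalar factors $\zeta^{d_i(d_j - d_i)}$ coming from $\overline{t}_{\zeta}$ and the twists $\zeta^{\mathrm{deg}\cdot\mathrm{deg}}$ from \eqref{commutation} is where all the care is required, though conceptually it is the same bookkeeping already displayed in the proofs of Propositions \ref{coproduct}, \ref{matricial} and \ref{quotient}.
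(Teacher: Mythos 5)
There is a genuine gap, and it sits exactly at the point you yourself flag as the technical heart. You take $X_0$ to be the universal $\textup{C}^*$\nobreakdash-algebra on the unitarity relations alone (essentially $\textup{C}(\textup{U}_{\zeta}^{+}(F^{-1}))$ relabelled) and then assert that the Cuntz--Krieger relations for the elements $\sum_i j_1(S_i)j_2(t_{ij})$ ``all follow from the unitarity of $\overline{t}_{\zeta}$ together with the commutation relation \eqref{commutation}.'' This is false for a general finite graph. In $\textup{C}^*(E)$ one has $S_i^*S_k=\delta_{ik}P_{r(e_i)}$, a projection rather than a scalar, so the computation that works for $\mathcal{O}_n$ (where $S_i^*S_k=\delta_{ik}\cdot 1$; this is exactly Proposition \ref{unitaryaction}) does not collapse. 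Concretely, take $E$ with two vertices $v_1,v_2$ and one loop at each: then $\eta^{X_0}(S_1)^*\eta^{X_0}(S_2)$ is a sum of terms $j_1(P_1)j_2(t_{11}^*t_{12})$ and $j_1(P_2)j_2(t_{21}^*t_{22})$ (up to powers of $\zeta$), and forcing this to vanish requires $t_{11}^*t_{12}=0$ and $t_{21}^*t_{22}=0$ separately, whereas unitarity only gives the sum $t_{11}^*t_{12}+t_{21}^*t_{22}=0$. Similar graph-dependent constraints come from $S_e^*S_e=P_{r(e)}$ and $P_v=\sum_{e\in s^{-1}(v)}S_eS_e^*$. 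So the morphism $\eta^{X_0}$ of the prescribed linear form generally does not exist on $X_0$, and $(X_0,\rho^{X_0},\eta^{X_0})$ is not an object of $\mathcal{C}'(E)$; note that Definition \ref{anothercat} makes the existence of $\eta^X$ part of the data, so this cannot be waved away. Had your claim been correct, the braided quantum symmetry group of every graph $\textup{C}^*$\nobreakdash-algebra under consideration would be the full braided free unitary quantum group, contradicting already the unbraided results of \cites{JM2018,JM2021} that this construction is meant to extend.

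The paper avoids this precisely by not taking the universal algebra itself as the initial object: for each object $(X,\rho^X,\eta^X)$ it uses the universal property of $\textup{C}(\textup{U}_{\zeta}^{+}(F))$ to get a surjection $\pi_X$ sending $u_{ij}\mapsto F_{ii}t_{ij}F_{jj}^{-1}$, forms the $\mathbb{T}$\nobreakdash-invariant ideal $\mathcal{I}=\bigcap_X\ker\pi_X$, and defines $\mathcal{U}=\textup{C}(\textup{U}_{\zeta}^{+}(F))/\mathcal{I}$. The quotient by $\mathcal{I}$ incorporates exactly the additional, graph-dependent relations satisfied in every object, which is what makes the formula $\eta^{\mathcal{U}}(S_j)=\sum_i j_1(S_i)j_2([F_{ii}^{-1}u_{ij}F_{jj}])$ well defined; initiality is then immediate from the construction. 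Your final paragraph (the universal property and uniqueness of $\phi$) is fine as far as it goes, but it rests on $X_0$ being an object of the category, which is the unproved and in general false step. To repair the argument you would either have to add the graph-dependent relations to the generators and relations defining your candidate (and identify them), or pass to the intersection-of-kernels quotient as the paper does.
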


\begin{proof}
We begin by remarking again that the proof is essentially the same as given in \cite{JM2018}. We recall that for the matrix $F$, $\textup{C}(\textup{U}_{\zeta}^{+}(F^{-1})$ is the universal $\textup{C}^*$\nobreakdash-algebra with generators $u_{ij}$ $(1 \leq i,j \leq n)$ subject to the relations that make $u$ and $F^{-1} \overline{u}_{\zeta}F$ unitaries, where $u=(u_{ij})_{1 \leq i,j \leq n}$ and $\overline{u}_{\zeta}=(\zeta^{d_i(d_j-d_i)}u^*_{ij})_{1 \leq i,j \leq n}$. Thus for any object $(X,\rho^X,\eta^X) \in \mathrm{Obj}(\mathcal{C}'(E))$, by the universal property of $\textup{C}(\textup{U}_{\zeta}^{+}(F^{-1}))$, there is a surjective $*$-homomorphism $\pi_X : \textup{C}(\textup{U}_{\zeta}^{+}(F^{-1})) \rightarrow X$ sending $u_{ij}$ to $F_{ii}t_{ij}F^{-1}_{jj}$. As remarked above, for $1 \leq i,j \leq n$, $t_{ij}$ is homogeneous of degree $d_j-d_i$ and so $\pi_X$ is $\mathbb{T}$\nobreakdash-equivariant. We define $\mathcal{I}=\bigcap_{\Lambda}{\ker}(\pi_X)$, where $\Lambda=\{X : (X,\rho^X,\eta^X) \in \mathrm{Obj}(\mathcal{C}'(E))\}$ and observe that $\mathcal{I}$ is a $\mathbb{T}$\nobreakdash-invariant closed two-sided ideal of $\textup{C}(\textup{U}_{\zeta}^{+}(F^{-1}))$ so that $\textup{C}(\textup{U}_{\zeta}^{+}(F^{-1}))/\mathcal{I}$ makes sense. We denote $\textup{C}(\textup{U}_{\zeta}^{+}(F^{-1}))/\mathcal{I}$ by $\mathcal{U}$ and let $\pi : \textup{C}(\textup{U}_{\zeta}^{+}(F^{-1})) \rightarrow \mathcal{U}$ be the quotient map. The $\mathbb{T}$\nobreakdash-invariance of $\mathcal{I}$ induces a $\mathbb{T}$\nobreakdash-action on $\mathcal{U}$ which can also be described explicitly as follows. We write $[a]=\pi(a)$ for the class of $a \in \textup{C}(\textup{U}_{\zeta}^{+}(F^{-1}))$ in $\mathcal{U}$. Then for $z \in \mathbb{T}$, $\rho^{\mathcal{U}}_z([u_{ij}])=[z^{d_j-d_i}u_{ij}]$. Thus $(\mathcal{U},\rho^{\mathcal{U}})$ is indeed a $\mathbb{T}$\nobreakdash-$\textup{C}^*$\nobreakdash-algebra. Once we show that there is a $\mathbb{T}$\nobreakdash-equivariant morphism $\eta^{\mathcal{U}}$ satisfying the requirements as in Definition \ref{anothercat}, the conclusion that $(\mathcal{U},\rho^{\mathcal{U}},\eta^{\mathcal{U}})$ is an initial object in $\mathcal{C}'(E)$ is immediate. To that end, let us set $\eta^{\mathcal{U}}(S_j)=\sum_{i=1}^nj_1(S_i)j_2([F^{-1}_{ii}u_{ij}F_{jj}])$. It is a tedious but straightforward check that $\eta^{\mathcal{U}}$ is indeed a morphism from $\textup{C}^*(E)$ to $\textup{C}^*(E) \boxtimes_{\zeta} X$ and it is clearly $\mathbb{T}$\nobreakdash-equivariant, thus implying that $\eta^{\mathcal{U}} \in \mathrm{Mor}^{\mathbb{T}}(\textup{C}^*(E), \textup{C}^*(E) \boxtimes_{\zeta} X)$. This finishes the proof.
\end{proof}

Let us keep the notations from the above proof; thus $(\mathcal{U},\rho^{\mathcal{U}},\eta^{\mathcal{U}})$ is the initial object of $\mathcal{C}'(E)$. We shall provide $\mathcal{U}$ with more structures so as to make it a braided compact quantum group and prove that it is the braided quantum symmetry group $(\mathrm{Qaut}(\textup{C}^*(E)),\eta^E)$. We first observe that $\mathcal{U} \boxtimes_{\zeta} \mathcal{U}$ can be made into an object of the category $\mathcal{C}'(E)$. 

\begin{lemma}
The braided tensor product $\mathcal{U} \boxtimes_{\zeta} \mathcal{U}$ of $\mathcal{U}$ with itself can be made into an object $(\mathcal{U} \boxtimes_{\zeta} \mathcal{U},\rho^{\mathcal{U} \boxtimes_{\zeta} \mathcal{U}}, \eta^{\mathcal{U} \boxtimes_{\zeta} \mathcal{U}})$ of $\mathcal{C}'(E)$.
\end{lemma}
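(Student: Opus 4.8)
The plan is to provide the three pieces of data required by Definition~\ref{anothercat}. The $\mathbb{T}$\nobreakdash-$\textup{C}^*$\nobreakdash-algebra structure costs nothing: $\rho^{\mathcal{U}\boxtimes_{\zeta}\mathcal{U}}$ is the canonical $\textup{C}(\mathbb{T})$\nobreakdash-coaction borne by every braided tensor product (the lemma following Definition~\ref{braided}). Writing $j_1^{\mathcal{U}},j_2^{\mathcal{U}}\in\mathrm{Mor}(\mathcal{U},\mathcal{U}\boxtimes_{\zeta}\mathcal{U})$ for the two canonical embeddings and, as in the proof of the previous lemma, $t^{\mathcal{U}}_{ij}=[F^{-1}_{ii}u_{ij}F_{jj}]$ for the distinguished generators of $\mathcal{U}$ (so that $(Ft^{\mathcal{U}}F^{-1})_{ij}=[u_{ij}]$ and $\overline{t^{\mathcal{U}}}_{\zeta}=F^{-1}\overline{[u]}_{\zeta}F$), I would take the candidate generators $T_{ij}:=\sum_{k=1}^{n}j_1^{\mathcal{U}}(t^{\mathcal{U}}_{ik})\,j_2^{\mathcal{U}}(t^{\mathcal{U}}_{kj})$ (each homogeneous of degree $d_j-d_i$) and the candidate action $\eta^{\mathcal{U}\boxtimes_{\zeta}\mathcal{U}}:=(\eta^{\mathcal{U}}\boxtimes_{\zeta}\mathrm{id}_{\mathcal{U}})\circ\eta^{\mathcal{U}}$, whose target $(\textup{C}^*(E)\boxtimes_{\zeta}\mathcal{U})\boxtimes_{\zeta}\mathcal{U}$ is read as $\textup{C}^*(E)\boxtimes_{\zeta}(\mathcal{U}\boxtimes_{\zeta}\mathcal{U})$ via associativity of $\boxtimes_{\zeta}$. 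Expanding $\eta^{\mathcal{U}}(S_j)=\sum_k j_1(S_k)j_2(t^{\mathcal{U}}_{kj})$, applying Lemma~\ref{morphism} to $\eta^{\mathcal{U}}\boxtimes_{\zeta}\mathrm{id}_{\mathcal{U}}$ termwise and regrouping the three legs yields at once $\eta^{\mathcal{U}\boxtimes_{\zeta}\mathcal{U}}(S_j)=\sum_{i=1}^{n}j_1(S_i)\,j_2(T_{ij})$, which is the shape demanded by Definition~\ref{anothercat}; $\mathbb{T}$\nobreakdash-equivariance is then automatic (composite of $\mathbb{T}$\nobreakdash-equivariant morphisms, or: the $T_{ij}$ are homogeneous).

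The heart of the matter is the unitarity of $FTF^{-1}$ and $\overline{T}_{\zeta}$. Put $W:=FTF^{-1}$; pushing the scalar entries of the diagonal matrix $F$ through the $j^{\mathcal{U}}_{\ell}$ (and inserting $F^{-1}_{kk}F_{kk}=1$) gives $W_{ij}=\sum_{k}j_1^{\mathcal{U}}([u_{ik}])\,j_2^{\mathcal{U}}([u_{kj}])$, and a parallel manipulation gives $F^{-1}\overline{W}_{\zeta}F=\overline{T}_{\zeta}$. But $\sum_{k}j_1^{\mathcal{U}}([u_{ik}])j_2^{\mathcal{U}}([u_{kj}])$ is exactly the element ``$U_{ij}$'' occurring in the proof of Proposition~\ref{coproduct}, now assembled inside $\mathcal{U}\boxtimes_{\zeta}\mathcal{U}$ out of the classes $[u_{ij}]$; and those classes satisfy the defining relations of $\textup{C}(\textup{U}_{\zeta}^{+}(F^{-1}))$, namely that $[u]$ and $F^{-1}\overline{[u]}_{\zeta}F$ are unitary. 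Since the computation in the proof of Proposition~\ref{coproduct} used only these two unitarity relations together with the $\zeta$\nobreakdash-commutation between the first and second legs, it transplants word for word and shows that $W=FTF^{-1}$ and $F^{-1}\overline{W}_{\zeta}F=\overline{T}_{\zeta}$ are unitary.

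Finally, for the generation clause of Definition~\ref{anothercat} I would work with the $\textup{C}^*$\nobreakdash-subalgebra of $\mathcal{U}\boxtimes_{\zeta}\mathcal{U}$ generated by the $T_{ij}$ --- which, by the previous paragraph, carries the required unitary matrices --- and corestrict $\eta^{\mathcal{U}\boxtimes_{\zeta}\mathcal{U}}$ to it. This needs Lemma~\ref{linearstar}, giving $\eta^{\mathcal{U}\boxtimes_{\zeta}\mathcal{U}}(S_j^*)=\sum_i j_1(S_i^*)j_2((\overline{T}_{\zeta})_{ij})$, together with the Cuntz--Krieger relations --- e.g.\ $j_1(S_i^*)\,\eta^{\mathcal{U}\boxtimes_{\zeta}\mathcal{U}}(S_j)=j_1(P_{r(e_i)})\,j_2(T_{ij})$ --- to confirm that the corestricted map is still well defined, $\mathbb{T}$\nobreakdash-equivariant and Podle\'s non-degenerate.

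I expect this last step --- the bookkeeping with the two $\zeta$\nobreakdash-commuting legs and the non-degeneracy check over the restricted algebra --- to be the only genuinely laborious part; the two unitarity conditions, by contrast, drop straight out of Proposition~\ref{coproduct}. Once $(\mathcal{U}\boxtimes_{\zeta}\mathcal{U},\rho^{\mathcal{U}\boxtimes_{\zeta}\mathcal{U}},\eta^{\mathcal{U}\boxtimes_{\zeta}\mathcal{U}})$ is recognised as an object of $\mathcal{C}'(E)$, initiality of $\mathcal{U}$ will then furnish the candidate comultiplication $\mathcal{U}\to\mathcal{U}\boxtimes_{\zeta}\mathcal{U}$.
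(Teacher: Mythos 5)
Your proposal follows essentially the same route as the paper: the paper also takes the canonical $\textup{C}(\mathbb{T})$-coaction on $\mathcal{U}\boxtimes_{\zeta}\mathcal{U}$, the generators $t_{ij}=\sum_{k}j_1([u_{ik}])j_2([u_{kj}])$ (your $T_{ij}$ coincide with $F^{-1}_{ii}t_{ij}F_{jj}$, a purely cosmetic rescaling by the diagonal $F$), the composite coaction $\eta^{\mathcal{U}\boxtimes_{\zeta}\mathcal{U}}=(\eta^{\mathcal{U}}\boxtimes_{\zeta}\mathrm{id}_{\mathcal{U}})\circ\eta^{\mathcal{U}}$ computed on the $S_j$ exactly as you do, and the unitarity of the two matrices obtained by repeating the computation of Proposition \ref{coproduct} verbatim with the classes $[u_{ij}]$; so the core of your argument is the paper's argument. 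The one place you diverge is the generation clause of Definition \ref{anothercat}: the paper simply asserts that $\mathcal{U}\boxtimes_{\zeta}\mathcal{U}$ is generated by the $t_{ij}$ and takes $X=\mathcal{U}\boxtimes_{\zeta}\mathcal{U}$, whereas you, doubting this, pass to the $\textup{C}^*$\nobreakdash-subalgebra generated by the $T_{ij}$ and corestrict. Your caution is not unreasonable (already in the unbraided commutative case the algebra generated by such ``coproduct coefficients'' is typically a proper subalgebra of the full tensor product), but be aware of two points. First, your version then establishes that this subalgebra, not $\mathcal{U}\boxtimes_{\zeta}\mathcal{U}$ itself, is an object of $\mathcal{C}'(E)$ --- a weaker statement than the lemma as phrased, though it suffices for the subsequent corollary: initiality of $\mathcal{U}$ gives a morphism into the subalgebra, and composing with the inclusion yields the desired $\Delta_{\mathcal{U}}:\mathcal{U}\to\mathcal{U}\boxtimes_{\zeta}\mathcal{U}$ with $\Delta_{\mathcal{U}}([u_{ij}])=\sum_k j_1([u_{ik}])j_2([u_{kj}])$, and uniqueness holds because the $[u_{ij}]$ generate $\mathcal{U}$. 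Second, the final ``laborious'' step you anticipate is largely unnecessary: since all algebras here are unital, morphisms in $\mathcal{C}'(E)$ are just unital $\mathbb{T}$\nobreakdash-equivariant $*$\nobreakdash-homomorphisms of the prescribed form on the $S_j$ --- there is no Podle\'s condition to check --- and the corestriction is automatically well defined because $\eta^{\mathcal{U}\boxtimes_{\zeta}\mathcal{U}}$ is a $*$\nobreakdash-homomorphism and $\textup{C}^*(E)$ is generated by the $S_j$, so no Cuntz--Krieger bookkeeping is needed beyond what you already did for $\eta^{\mathcal{U}\boxtimes_{\zeta}\mathcal{U}}(S_j)$ and its adjoint.
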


\begin{proof}
Indeed, letting $t_{ij}=\sum_{k=1}^nj_1([u_{ik}])j_2([u_{kj}])$ for $1 \leq i,j \leq n$, we see that the $\textup{C}^*$\nobreakdash-algebra $\mathcal{U} \boxtimes_{\zeta} \mathcal{U}$ is generated by $t_{ij}$ $(1 \leq i,j \leq n)$. A repetition of the proof of Proposition \ref{coproduct} then yields that $t_{ij}$ satisfy the required relations as in Definition \ref{anothercat}. Finally, the morphism $\eta^{\mathcal{U} \boxtimes_{\zeta} \mathcal{U}}=(\eta^{\mathcal{U}} \boxtimes_{\zeta} \mathrm{id}_{\mathcal{U}})\circ \eta^{\mathcal{U}} \in \mathrm{Mor}^{\mathbb{T}}(\textup{C}^*(E), \textup{C}^*(E) \boxtimes_{\zeta} \mathcal{U} \boxtimes_{\zeta} \mathcal{U})$ satisfies 
\allowdisplaybreaks{
\begin{align*}
\eta^{\mathcal{U} \boxtimes_{\zeta} \mathcal{U}}(S_j)={}&\sum_{k,i=1}^nj_1(S_i)j_2(F^{-1}_{ii}[u_{ik}]F_{kk})j_3(F^{-1}_{kk}[u_{kj}]F_{jj})\\
={}&\sum_{i=1}^nj_1(S_i)j_2(F^{-1}_{ii}t_{ij}F_{jj}),
\end{align*}   
}which implies that $(\mathcal{U} \boxtimes_{\zeta} \mathcal{U},\rho^{\mathcal{U} \boxtimes_{\zeta} \mathcal{U}}, \eta^{\mathcal{U} \boxtimes_{\zeta} \mathcal{U}})$ is an object of $\mathcal{C}'(E)$.
\end{proof}

\begin{corollary}
There exists a unique $\mathbb{T}$\nobreakdash-equivariant morphism $\Delta_{\mathcal{U}} \in \mathrm{Mor}^{\mathbb{T}}(\mathcal{U}, \mathcal{U} \boxtimes_{\zeta} \mathcal{U})$ such that $\Delta_{\mathcal{U}}([u_{ij}])=\sum_{k=1}^nj_1([u_{ik}])j_2([u_{kj}])$. Furthermore, $\Delta_{\mathcal{U}}$ is coassociative and bisimplifiable \textup{(}see Definition \textup{\ref{bcqg})}.
\end{corollary}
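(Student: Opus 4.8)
The plan is to extract $\Delta_{\mathcal{U}}$ directly from the universal property of the initial object, exactly as in the ordinary case, and then deduce its properties from the corresponding properties of the coproduct on $\textup{C}(\textup{U}_{\zeta}^{+}(F))$ established in Proposition \ref{coproduct}. Since $(\mathcal{U} \boxtimes_{\zeta} \mathcal{U}, \rho^{\mathcal{U} \boxtimes_{\zeta} \mathcal{U}}, \eta^{\mathcal{U} \boxtimes_{\zeta} \mathcal{U}})$ is an object of $\mathcal{C}'(E)$ by the preceding lemma, and $(\mathcal{U}, \rho^{\mathcal{U}}, \eta^{\mathcal{U}})$ is the initial object, there is a unique morphism $\Delta_{\mathcal{U}} : \mathcal{U} \to \mathcal{U} \boxtimes_{\zeta} \mathcal{U}$ in $\mathcal{C}'(E)$. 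First I would unwind what this morphism does on generators: the compatibility condition $(\mathrm{id}_{\textup{C}^*(E)} \boxtimes_{\zeta} \Delta_{\mathcal{U}}) \circ \eta^{\mathcal{U}} = \eta^{\mathcal{U} \boxtimes_{\zeta} \mathcal{U}}$, combined with the explicit formulas $\eta^{\mathcal{U}}(S_j) = \sum_i j_1(S_i) j_2(F^{-1}_{ii}[u_{ij}]F_{jj})$ and $\eta^{\mathcal{U} \boxtimes_{\zeta} \mathcal{U}}(S_j) = \sum_i j_1(S_i) j_2(F^{-1}_{ii} t_{ij} F_{jj})$ with $t_{ij} = \sum_k j_1([u_{ik}])j_2([u_{kj}])$, forces $\Delta_{\mathcal{U}}(F^{-1}_{ii}[u_{ij}]F_{jj}) = F^{-1}_{ii} t_{ij} F_{jj}$, i.e. $\Delta_{\mathcal{U}}([u_{ij}]) = \sum_{k=1}^n j_1([u_{ik}])j_2([u_{kj}])$, which is the asserted formula. (Here one uses that the $S_i$ are linearly independent so the coefficients can be matched; this is the only place where one must be a little careful.)

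Uniqueness is then automatic: any $\mathbb{T}$-equivariant morphism $\mathcal{U} \to \mathcal{U} \boxtimes_{\zeta} \mathcal{U}$ with that action on the generators $[u_{ij}]$ agrees with $\Delta_{\mathcal{U}}$ because the $[u_{ij}]$ generate $\mathcal{U}$ as a $\textup{C}^*$-algebra. For the remaining properties — coassociativity and bisimplifiability — I would not redo the computations but instead invoke compatibility with the quotient map $\pi : \textup{C}(\textup{U}_{\zeta}^{+}(F)) \to \mathcal{U}$. The point is that $(\pi \boxtimes_{\zeta} \pi) \circ \Delta_{\textup{U}_{\zeta}^{+}(F)} = \Delta_{\mathcal{U}} \circ \pi$ on generators (both send $u_{ij}$ to $\sum_k j_1([u_{ik}])j_2([u_{kj}])$), hence everywhere. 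Since $\Delta_{\textup{U}_{\zeta}^{+}(F)}$ is coassociative by Proposition \ref{coproduct} and $\pi$ is a surjective $\mathbb{T}$-equivariant $*$-homomorphism, coassociativity of $\Delta_{\mathcal{U}}$ follows by applying $(\pi \boxtimes_{\zeta} \pi \boxtimes_{\zeta} \pi)$ to both sides of the coassociativity identity for $\Delta_{\textup{U}_{\zeta}^{+}(F)}$ and using surjectivity of $\pi$; alternatively and more cleanly, one checks directly that both $(\Delta_{\mathcal{U}} \boxtimes_{\zeta} \mathrm{id}_{\mathcal{U}}) \circ \Delta_{\mathcal{U}}$ and $(\mathrm{id}_{\mathcal{U}} \boxtimes_{\zeta} \Delta_{\mathcal{U}}) \circ \Delta_{\mathcal{U}}$ send $[u_{ij}]$ to $\sum_{k,l} j_1([u_{ik}])j_2([u_{kl}])j_3([u_{lj}])$, so they coincide on a generating set and hence are equal.

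For bisimplifiability the argument is the same abstract transfer, but I would prefer to reproduce the density argument from the proof of Proposition \ref{coproduct} directly in $\mathcal{U}$, since it is self-contained and short: set $S = \{x \in \mathcal{U} \mid j_1(x) \in \Delta_{\mathcal{U}}(\mathcal{U}) j_2(\mathcal{U})\}$, observe that $\sum_j \Delta_{\mathcal{U}}([u_{ij}]) j_2([u_{kj}]^*)$ telescopes (using $(\overline{q}_\zeta)^* \overline{q}_\zeta = I_n$-type unitarity of $[u]$, which holds in $\mathcal{U}$ as it is a quotient of $\textup{C}(\textup{U}_{\zeta}^{+}(F))$) to give $j_1([u_{ik}]) \in S$, and similarly $j_1([u_{ik}]^*) \in S$; then the commutation relations $j_1(\cdot)j_2(\mathcal{U}) = j_2(\mathcal{U})j_1(\cdot)$ show $S$ is closed under products of homogeneous elements, so $S$ contains all monomials in the $[u_{ij}], [u_{ij}]^*$ and is dense, whence right simplifiability; left simplifiability is symmetric. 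The only genuinely new verification compared with Proposition \ref{coproduct} is that the map $\Delta_{\mathcal{U}}$ is well-defined on $\mathcal{U}$ rather than on $\textup{C}(\textup{U}_{\zeta}^{+}(F))$, i.e. that it descends to the quotient — but this is handled for free by the universal property of the initial object, which is precisely why the statement is phrased as a corollary. I expect the main (minor) obstacle to be bookkeeping: making sure the conjugation by the diagonal matrix $F$ that appears in the definition of $\eta^{\mathcal{U}}$ cancels correctly when reading off the formula for $\Delta_{\mathcal{U}}$ on the $[u_{ij}]$, and that the relations defining $\mathcal{C}'(E)$-objects really do descend to $\mathcal{U} \boxtimes_{\zeta} \mathcal{U}$ as needed — both of which are routine but must be stated.
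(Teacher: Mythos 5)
Your proposal follows essentially the same route as the paper: obtain $\Delta_{\mathcal{U}}$ from the initiality of $(\mathcal{U},\rho^{\mathcal{U}},\eta^{\mathcal{U}})$ in $\mathcal{C}'(E)$ applied to $(\mathcal{U}\boxtimes_{\zeta}\mathcal{U},\rho^{\mathcal{U}\boxtimes_{\zeta}\mathcal{U}},\eta^{\mathcal{U}\boxtimes_{\zeta}\mathcal{U}})$, read off the formula on the generators $[u_{ij}]$ by evaluating the intertwining identity at the $S_j$, and then establish coassociativity and bisimplifiability by repeating the arguments of Proposition \ref{coproduct} (your extra transfer-via-$\pi$ argument is a harmless variant, and the unitarity relation actually used in the telescoping sum is $\sum_j [u_{lj}][u_{kj}]^*=\delta_{lk}$, which indeed descends to the quotient). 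This matches the paper's proof, so the proposal is correct.
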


\begin{proof}
Since $\mathcal{U}$ is the initial object in the category $\mathcal{C}'(E)$, there is a unique morphism $\Delta_{\mathcal{U}}$ from $(\mathcal{U},\rho^{\mathcal{U}},\eta^{\mathcal{U}})$ to $(\mathcal{U} \boxtimes_{\zeta} \mathcal{U},\rho^{\mathcal{U} \boxtimes_{\zeta} \mathcal{U}}, \eta^{\mathcal{U} \boxtimes_{\zeta} \mathcal{U}})$. Explicitly, this means that $\Delta_{\mathcal{U}}$ is a $\mathbb{T}$\nobreakdash-equivariant morphism $\Delta_{\mathcal{U}} : \mathcal{U} \rightarrow \mathcal{U} \boxtimes_{\zeta} \mathcal{U}$ such that $(\mathrm{id}_{\textup{C}^*(E)} \boxtimes_{\zeta} \Delta_{\mathcal{U}})\circ \eta^{\mathcal{U}}=\eta^{\mathcal{U} \boxtimes_{\zeta} \mathcal{U}}$. The last equality when evaluated at $S_j$ $(1 \leq j \leq n)$, together with their linear independence, yields $\Delta_{\mathcal{U}}([u_{ij}])=\sum_{k=1}^nj_1([u_{ik}])j_2([u_{kj}])$. The arguments for coassociativity and bisimplifiability are exactly similar as in the proof of the Proposition \ref{coproduct}.
\end{proof}

\begin{corollary}
There exists a braided compact quantum group \textup{(}over $\mathbb{T}$\textup{)}, to be denoted $G_E$, such that $(\textup{C}(G_E),\rho^{\textup{C}(G_E)},\Delta_{G_E})=(\mathcal{U},\rho^{\mathcal{U}},\Delta_{\mathcal{U}})$. Furthermore, $G_E$ acts linearly, faithfully on $\textup{C}^*(E)$ preserving $\tau_E$ via $\eta^{\mathcal{U}}$, denoted henceforth by $\eta^E$.
\end{corollary}

\begin{proof}
The first statement is essentially renaming. For the second, we observe that $\eta^{\mathcal{U}}$ is coassociative. Indeed, by definition $\eta^{\mathcal{U} \boxtimes_{\zeta} \mathcal{U}}=(\eta^{\mathcal{U}} \boxtimes_{\zeta} \mathrm{id}_{\mathcal{U}})\circ \eta^{\mathcal{U}}$ and we have $(\mathrm{id}_{\textup{C}^*(E)} \boxtimes_{\zeta} \Delta_{\mathcal{U}})\circ \eta^{\mathcal{U}}=\eta^{\mathcal{U} \boxtimes_{\zeta} \mathcal{U}}=(\eta^{\mathcal{U}} \boxtimes_{\zeta} \mathrm{id}_{\mathcal{U}})\circ \eta^{\mathcal{U}}$. The Podle\'s condition can be checked again along the same lines as in the proof of Proposition \ref{coproduct}. Finally, $\eta^{\mathcal{U}}$ is, once again by definition, linear, faithful and it preserves $\tau_E$ by Proposition \ref{matricial}. This completes the proof.
\end{proof}

We end this section with the following main existence theorem.

\begin{theorem}\label{graphsymm}
Let $E=(E^0,E^1,r,s)$ be a finite, directed graph without sinks that satisfies the condition $(\dagger)$. Then $(\mathrm{Qaut}(\textup{C}^*(E)),\eta^E)$ exists.
\end{theorem}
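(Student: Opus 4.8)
The plan is to show that the object $(G_E,\eta^E)$ produced above is a terminal object of $\mathcal{C}(E,\tau_E)$, which then deserves the name $(\mathrm{Qaut}(\textup{C}^*(E)),\eta^E)$. Since we already know $(G_E,\eta^E)$ is an object of $\mathcal{C}(E,\tau_E)$, the task is: given an arbitrary $(G,\eta)\in\mathrm{Obj}(\mathcal{C}(E,\tau_E))$, to construct a unique morphism $(G,\eta)\to(G_E,\eta^E)$ in $\mathcal{C}(E,\tau_E)$, that is, a unique $\mathbb{T}$-equivariant Hopf $*$-homomorphism $\phi\colon\mathcal{U}\to\textup{C}(G)$ with $(\mathrm{id}_{\textup{C}^*(E)}\boxtimes_{\zeta}\phi)\circ\eta^E=\eta$.

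The first step will be to realize $(G,\eta)$ inside the auxiliary category $\mathcal{C}'(E)$. Writing $\eta(S_j)=\sum_{i=1}^nj_1(S_i)j_2(q_{ij})$, Lemma \ref{linearstar} records that each $q_{ij}$ is homogeneous of degree $d_j-d_i$, while Proposition \ref{matricial} together with the bosonization argument in the proof of Proposition \ref{quotient} shows that the matrices $FqF^{-1}$ and $\overline{q}_{\zeta}$ are both unitaries, where $F$ is the positive diagonal matrix with $F^*F=\tilde F$. Finally, faithfulness of $\eta$ forces $\textup{C}(G)$ to be generated by the $q_{ij}$: since $\textup{C}^*(E)$ is the closed linear span of the elements $S_{\alpha}S_{\beta}^*$ and $\eta$ is a linear $*$-homomorphism, each $(f\boxtimes_{\zeta}\mathrm{id}_{\textup{C}(G)})\eta(b)$ is a noncommutative polynomial in the $q_{ij}$ and $q_{ij}^*$. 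Hence $(\textup{C}(G),\rho^{\textup{C}(G)},\eta)$ is an object of $\mathcal{C}'(E)$, with the matrix $(t_{ij})$ of Definition \ref{anothercat} taken to be $(q_{ij})$.

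Next I would invoke initiality of $(\mathcal{U},\rho^{\mathcal{U}},\eta^{\mathcal{U}})$ in $\mathcal{C}'(E)$ to obtain a unique $\mathbb{T}$-equivariant morphism $\phi\colon\mathcal{U}\to\textup{C}(G)$ satisfying $(\mathrm{id}_{\textup{C}^*(E)}\boxtimes_{\zeta}\phi)\circ\eta^{\mathcal{U}}=\eta$. Evaluating this identity on $S_j$ and using that the $S_i$ are linearly independent pins down $\phi$ on generators as $\phi([u_{ij}])=(FqF^{-1})_{ij}$. It then remains to verify that $\phi$ intertwines the comultiplications, i.e.\ $\Delta_G\circ\phi=(\phi\boxtimes_{\zeta}\phi)\circ\Delta_{\mathcal{U}}$; this is a computation on the generators $[u_{ij}]$ of exactly the same shape as the verification of coassociativity in the proof of Proposition \ref{coproduct}. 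One uses Lemma \ref{linearstar}(2), inserting $F^{-1}F=I_n$ between the two halves, to see that $\Delta_G$ sends $(FqF^{-1})_{ij}$ to $\sum_{l=1}^nj_1\bigl((FqF^{-1})_{il}\bigr)j_2\bigl((FqF^{-1})_{lj}\bigr)$, which matches $(\phi\boxtimes_{\zeta}\phi)\Delta_{\mathcal{U}}([u_{ij}])$ directly. Thus $\phi$ is a morphism $(G,\eta)\to(G_E,\eta^E)$ in $\mathcal{C}(E,\tau_E)$, and its uniqueness is already contained in the uniqueness clause of the initiality of $\mathcal{U}$ in $\mathcal{C}'(E)$. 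Finiteness of $E$ and the existence of $\tau_E$ (ensured by $(\dagger)$) are what make all the cited inputs available, so the theorem follows.

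I expect the genuine obstacle to be the first step, namely checking that an arbitrary object of $\mathcal{C}(E,\tau_E)$ really descends to an object of $\mathcal{C}'(E)$. The two delicate points there are: (i) that $\textup{C}(G)$ is generated by the matrix coefficients $q_{ij}$, which is precisely where the faithfulness hypothesis enters; and (ii) that one has the full two-sided unitarity of $FqF^{-1}$ and $\overline{q}_{\zeta}$, which only becomes visible after passing to the bosonization $G\rtimes\mathbb{T}$, as in Proposition \ref{quotient}. Once $(G,\eta)$ is placed inside $\mathcal{C}'(E)$, the rest is formal: initiality supplies $\phi$, and the Hopf property is the by-now-familiar computation modeled on Proposition \ref{coproduct}.
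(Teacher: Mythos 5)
Your proposal is correct and follows essentially the same route as the paper: given an arbitrary $(G,\eta)$ in $\mathcal{C}(E,\tau_E)$, one places $(\textup{C}(G),\rho^{\textup{C}(G)},\eta)$ in the auxiliary category $\mathcal{C}'(E)$ via Proposition \ref{quotient} (with faithfulness supplying generation by the $q_{ij}$) and then invokes initiality of $\mathcal{U}$ to produce the unique $\mathbb{T}$\nobreakdash-equivariant morphism $\phi$ with $(\mathrm{id}_{\textup{C}^*(E)}\boxtimes_{\zeta}\phi)\circ\eta^{E}=\eta$. Your explicit verification that $\phi([u_{ij}])=(FqF^{-1})_{ij}$ intertwines the comultiplications is a detail the paper leaves implicit, but it is the same argument as in Proposition \ref{coproduct} and does not constitute a different approach.
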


\begin{proof}
As in \cite{JM2018}, we shall show that $G_E$ is the terminal object in the category $\mathcal{C}(E,\tau_E)$ and thus is isomorphic to $(\mathrm{Qaut}(\textup{C}^*(E)),\eta^E)$. To that end, let $(G,\eta) \in \mathrm{Obj}(\mathcal{C}(E,\tau_E))$ be an object in the category $\mathcal{C}(E,\tau_E)$. By Proposition \ref{quotient}, the triple $(\textup{C}(G),\rho^{\textup{C}(G)},\eta^{\textup{C}(G)}=\eta)$ is an object of the category $\mathcal{C}'(E)$, hence there is a unique $\phi \in \mathrm{Mor}^{\mathbb{T}}(\textup{C}(G_E), \textup{C}(G))$ such that $(\mathrm{id}_{\textup{C}^*(E)} \boxtimes_{\zeta} \phi)\circ \eta^{\textup{C}(G_E)}=\eta^{\textup{C}(G)}$. This is equivalent to saying that $G_E$ is indeed the terminal object in the category $\mathcal{C}(E,\tau_E)$.
\end{proof}

\section{Symmetries of the Cuntz algebra}\label{sec:cuntz-algebra} Having proved the existence of the braided quantum symmetry group, we now explicitly compute it for the Cuntz algebra $\mathcal{O}_n$. We recall that the Cuntz algebra $\mathcal{O}_n$ is the graph $\textup{C}^*$\nobreakdash-algebra corresponding to the graph (denoted by $E_{\mathcal{O}_n}$) with a single vertex and $n$-loops at it. Explicitly, $\mathcal{O}_n$ is the universal $\textup{C}^*$\nobreakdash-algebra generated by $S_i$ for $1 \leq i \leq n$ subject to the relations
\begin{equation}\label{cuntz}
S^*_iS_j=\delta_{ij} \quad (1 \leq i,j \leq n), \text{ and } S_1S_1^*+\dots+S_nS_n^*=1.
\end{equation} $\mathcal{O}_n$ is equipped with the generalized gauge action $\rho^{\mathcal{O}_n} : \mathcal{O}_n \rightarrow \mathcal{O}_n \otimes \textup{C}(\mathbb{T})$ given by $\rho^{\mathcal{O}_n}_z(S_i)=z^{d_i}S_i$, $1 \leq i \leq n$, $z \in \mathbb{T}$, and $\underline{d}=(d_1,\dots,d_n) \in \mathbb{Z}^n$. The next Proposition shows that the braided free unitary quantum group $\textup{U}_{\zeta}^{+}(n)$ acts on $\mathcal{O}_n$.

\begin{proposition}\label{unitaryaction}
There is a unique unital $*$-homomorphism 
\[
\eta^{\mathcal{O}_n} : \mathcal{O}_n \rightarrow \mathcal{O}_n \boxtimes_{\zeta} \textup{C}(\textup{U}_{\zeta}^+(n))
\]such that $\eta^{\mathcal{O}_n}(S_j)=\sum_{i=1}^nj_1(S_i)j_2(u_{ij})$ for $1 \leq i,j \leq n$. Furthermore, $\eta^{\mathcal{O}_n}$ is $\mathbb{T}$\nobreakdash-equivariant, coassociative and satisfies Podle\'s condition \textup{(}see Definition \textup{\ref{action})}. 
\end{proposition}

\begin{proof}
Let $S_j'=\sum_{i=1}^nj_1(S_i)j_2(u_{ij})$ for $1 \leq i,j \leq n$. We remark that each $S_j'$ is homogeneous of degree $d_j$. We first observe that for each $1 \leq j \leq n$
\allowdisplaybreaks{
\begin{align*}
S_j'^*={}&\sum_{i=1}^nj_2(u^*_{ij})j_1(S_i^*)\\
={}&\sum_{i=1}^n\zeta^{-d_i(d_i-d_j)}j_1(S_i^*)j_2(u^*_{ij})\\
={}&\sum_{i=1}^nj_1(S_i^*)j_2(\zeta^{d_i(d_j-d_i)}u^*_{ij})=\sum_{i=1}^nj_1(S_i^*)j_2((\overline{u}_{\zeta})_{ij}).
\end{align*}
}Now, by the universal property, we see that a (necessarily unique) $*$-homomorphism $\eta^{\mathcal{O}_n} : \mathcal{O}_n \rightarrow \mathcal{O}_n \boxtimes_{\zeta} \textup{C}(\textup{U}_{\zeta}^+(n))$ satisfying $\eta^{\mathcal{O}_n}(S_j)=S_j'$ exists if and only if $S_j'$ (for $1 \leq j \leq n$) satisfy Eq.\eqref{cuntz}. To see that $S_j'$ for $1 \leq j \leq n$ indeed satisfy Eq.\eqref{cuntz}, we compute
\allowdisplaybreaks{
\begin{align*}
S_i'^*S_j'={}&\sum_{\alpha, \beta=1}^nj_1(S_{\alpha}^*)j_2(\zeta^{d_{\alpha}(d_i-d_{\alpha})}u^*_{\alpha i})j_1(S_{\beta})j_2(u_{\beta j})\\
={}&\sum_{\alpha, \beta=1}^nj_1(S_{\alpha}^*)j_1(S_{\beta})\zeta^{d_{\beta}(d_{\alpha}-d_i)}\zeta^{d_{\alpha}(d_i-d_{\alpha})}j_2(u^*_{\alpha i})j_2(u_{\beta j})\\
={}&\sum_{\alpha, \beta=1}^nj_1(S_{\alpha}^*S_{\beta})\zeta^{(d_{\beta}-d_{\alpha})(d_{\alpha}-d_i)}j_2(u^*_{\alpha i}u_{\beta j})\\
={}&\sum_{\alpha, \beta=1}^n\delta_{\alpha \beta}\zeta^{(d_{\beta}-d_{\alpha})(d_{\alpha}-d_i)}j_2(u^*_{\alpha i}u_{\beta j})\\
={}&\sum_{\alpha=1}^nj_2(u^*_{\alpha i}u_{\alpha j})=\delta_{ij},
\end{align*}
}The fourth and fifth equalities use the relations of $\mathcal{O}_n$ and that $u$ is a unitary, respectively. Similarly,
\allowdisplaybreaks{
\begin{align*}
\sum_{j=1}^nS_j'S_j'^*={}&\sum_{j,\alpha,\beta=1}^nj_1(S_{\alpha})j_2(u_{\alpha j})j_1(S^*_{\beta})j_2(\zeta^{d_{\beta}(d_j-d_{\beta})}u^*_{\beta j})\\
={}&\sum_{j,\alpha,\beta=1}^nj_1(S_{\alpha})j_1(S^*_{\beta})\zeta^{-d_{\beta}(d_j-d_{\alpha})}\zeta^{d_{\beta}(d_j-d_{\beta})}j_2(u_{\alpha j})j_2(u^*_{\beta j})\\
={}&\sum_{\alpha,\beta=1}^nj_1(S_{\alpha}S^*_{\beta})\zeta^{d_{\beta}d_{\alpha}-d_{\beta}^2}j_2(\sum_{j=1}^nu_{\alpha j}u^*_{\beta j})\\
={}&\sum_{\alpha,\beta=1}^nj_1(S_{\alpha}S^*_{\beta})\zeta^{d_{\beta}d_{\alpha}-d_{\beta}^2}\delta_{\alpha\beta}\\
={}&\sum_{\alpha=1}^nj_1(S_{\alpha}S^*_{\alpha})=1
\end{align*}
}Here also, the fourth and fifth equalities use that $u$ is a unitary and the relations of $\mathcal{O}_n$, respectively. Therefore, we have constructed a unique and unital $*$-homomorphism $\eta^{\mathcal{O}_n} : \mathcal{O}_n \rightarrow \mathcal{O}_n \boxtimes_{\zeta} \textup{C}(\textup{U}_{\zeta}^+(n))$ satisfying $\eta^{\mathcal{O}_n}(S_j)=S_j'$ for $1 \leq j \leq n$. As remarked above, for $1 \leq j \leq n$, $S_j'$ is homogeneous of degree $d_j$ and so $\eta^{\mathcal{O}_n}$ is $\mathbb{T}$\nobreakdash-equivariant. The coassociativity and the Podle\'s can be proved along the same lines as in the proof of Proposition \ref{coproduct}.
\end{proof}

Now for the Cuntz algebra $\mathcal{O}_n$, the vertex matrix $D$ is just a scalar $n$ and so $\rho(D)=n$. And therefore, the underlying graph satisfies the condition $(\dagger)$ of Remark \ref{dagger}, implying the existence of the $\mathrm{KMS}_{\log n}$ state $\tau_{E_{\mathcal{O}_n}}$, which we denote by $\tau_n$, to shorten the notation. By Proposition \ref{exist1}, $\tau_n(S_iS^*_j)=\delta_{ij} \frac{1}{n}$. In fact, on the dense $*$\nobreakdash-algebra spanned by $S_{\alpha}S^*_{\beta}$, where $\alpha,\beta$ are paths, $\tau_n$ is given by $\tau_n(S_{\alpha}S^*_{\beta})=\delta_{\alpha \beta}\frac{1}{n^{|\alpha|}}$. We recall the following lemma from \cite{JM2021}.

\begin{lemma}\label{cuntzkms}\cite{JM2021}
The $\mathrm{KMS}_{\log n}$ state $\tau_n$ satisfies $\tau_n(S_{\alpha}xS^*_{\beta})=\delta_{\alpha \beta}\frac{1}{n^{|\alpha|}}\tau_n(x)$ for all $x \in \mathcal{O}_n$ and paths $\alpha,\beta$ with $|\alpha|=|\beta|$.
\end{lemma}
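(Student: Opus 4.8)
The plan is to reduce the claimed identity, by linearity and continuity, to the spanning monomials of $\mathcal{O}_n$ and then compute both sides by hand. Fix paths $\alpha,\beta$ with $|\alpha|=|\beta|$. Both $x \mapsto \tau_n(S_\alpha x S_\beta^*)$ and $x \mapsto \delta_{\alpha\beta}\,n^{-|\alpha|}\,\tau_n(x)$ are bounded linear functionals on $\mathcal{O}_n$ (the former because $x \mapsto S_\alpha x S_\beta^*$ has norm at most $\|S_\alpha\|\,\|S_\beta\| \le 1$ and $\tau_n$ is a state), so it is enough to check that they agree on elements of the form $x = S_\mu S_\nu^*$ with $\mu,\nu$ paths, since these span a dense $*$-subalgebra (with the trivial path allowed, so that $1 = \sum_{i=1}^n S_iS_i^*$ is covered).

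So take $x = S_\mu S_\nu^*$. Since $\mathcal{O}_n$ has a single vertex, every concatenation of words is again a legitimate path, hence $S_\alpha S_\mu = S_{\alpha\mu}$ and $S_\nu^* S_\beta^* = (S_\beta S_\nu)^* = S_{\beta\nu}^*$, so $S_\alpha x S_\beta^* = S_{\alpha\mu} S_{\beta\nu}^*$. Using the formula $\tau_n(S_\gamma S_\delta^*) = \delta_{\gamma\delta}\,n^{-|\gamma|}$ recalled just above (a special case of Proposition \ref{exist1}), this gives $\tau_n(S_\alpha x S_\beta^*) = \delta_{\alpha\mu,\,\beta\nu}\, n^{-(|\alpha|+|\mu|)}$. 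The one point where the hypothesis enters is the combinatorial observation that, because $|\alpha|=|\beta|$, we have $\alpha\mu = \beta\nu$ if and only if $\alpha=\beta$ and $\mu=\nu$: comparing the first $|\alpha|=|\beta|$ letters forces $\alpha=\beta$, and then cancelling them gives $\mu=\nu$; the converse is obvious. Therefore $\delta_{\alpha\mu,\,\beta\nu} = \delta_{\alpha\beta}\,\delta_{\mu\nu}$, and
\begin{equation*}
\tau_n(S_\alpha x S_\beta^*) = \delta_{\alpha\beta}\,\delta_{\mu\nu}\, n^{-(|\alpha|+|\mu|)} = \delta_{\alpha\beta}\, n^{-|\alpha|}\cdot \delta_{\mu\nu}\, n^{-|\mu|} = \delta_{\alpha\beta}\, n^{-|\alpha|}\,\tau_n(S_\mu S_\nu^*) = \delta_{\alpha\beta}\, n^{-|\alpha|}\,\tau_n(x),
\end{equation*}
which is exactly the desired equality on the dense subalgebra; it then extends to all of $\mathcal{O}_n$ by continuity.

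This is a routine verification; the only genuine content is recognizing that the length condition $|\alpha|=|\beta|$ is precisely what makes the Kronecker delta factor as $\delta_{\alpha\mu,\beta\nu} = \delta_{\alpha\beta}\delta_{\mu\nu}$ (which fails in general), everything else being the density/continuity reduction together with the explicit form of $\tau_n$. Alternatively, one could argue conceptually using the gauge-invariant conditional expectation $\Phi$ from $\mathcal{O}_n$ onto its fixed-point algebra together with the facts that $\tau_n = \tau_n\circ\Phi$ and that $\tau_n$ restricts to the trace there, but the direct computation above is shorter and self-contained given Proposition \ref{exist1}.
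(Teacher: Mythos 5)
Your argument is correct. Note first that the paper does not actually prove this lemma: it is quoted verbatim from \cite{JM2021}, so there is no in-text proof to match against. Your verification is sound: the reduction to the spanning monomials $S_\mu S_\nu^*$ via boundedness of both functionals is legitimate (the paper itself records that $\textup{C}^*(E)$ is the closed span of such monomials), the concatenation identities hold because $E_{\mathcal{O}_n}$ has a single vertex, and the factorization $\delta_{\alpha\mu,\beta\nu}=\delta_{\alpha\beta}\delta_{\mu\nu}$ is exactly where the hypothesis $|\alpha|=|\beta|$ is used; the exponents then match the recalled formula $\tau_n(S_\gamma S_\delta^*)=\delta_{\gamma\delta}n^{-|\gamma|}$. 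For comparison, the route taken in the cited source (and the quickest one) is to apply the $\mathrm{KMS}_{\log n}$ condition directly: $S_\alpha$ is analytic for the gauge dynamics with $\sigma_{i\log n}(S_\alpha)=n^{-|\alpha|}S_\alpha$, so $\tau_n(S_\alpha x S_\beta^*)=n^{-|\alpha|}\tau_n(x S_\beta^* S_\alpha)=\delta_{\alpha\beta}n^{-|\alpha|}\tau_n(x)$, using $S_\beta^*S_\alpha=\delta_{\alpha\beta}1$ for paths of equal length. That argument trades your explicit formula for $\tau_n$ on monomials for the abstract KMS property (plus the same density/continuity step to pass from analytic elements to general $x$), and it generalizes immediately to any gauge-KMS state; your computation instead leans on Proposition \ref{exist1} and is entirely self-contained within what the paper has already recalled, which is a perfectly acceptable trade-off here.
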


\begin{proposition}\label{objectproof}
The pair $(\textup{U}_{\zeta}^+(n), \eta^{\mathcal{O}_n})$ is an object of the category $\mathcal{C}(E_{\mathcal{O}_n}, \tau_n)$.    
\end{proposition}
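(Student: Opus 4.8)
Almost everything needed is already available: $\textup{U}_{\zeta}^{+}(n)=\textup{U}_{\zeta}^{+}(I_n)$ is a braided compact quantum group by Definition~\ref{thequantum}, and by Proposition~\ref{unitaryaction} the map $\eta^{\mathcal{O}_n}$ is a $\mathbb{T}$\nobreakdash-equivariant, coassociative $*$\nobreakdash-homomorphism satisfying the Podle\'s condition, hence an action in the sense of Definition~\ref{action}, and it is linear by construction with $q_{ij}=u_{ij}$. So the plan is to verify the remaining two conditions: that $\eta^{\mathcal{O}_n}$ preserves $\tau_n$ and that it is faithful (Definition~\ref{faithful}).

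For preservation of $\tau_n$, I would set $\phi:=(\tau_n\boxtimes_{\zeta}\mathrm{id}_{\textup{C}(\textup{U}_{\zeta}^{+}(n))})\circ\eta^{\mathcal{O}_n}$ and prove $\phi(S_{\alpha}S_{\beta}^{*})=\tau_n(S_{\alpha}S_{\beta}^{*})1$ by induction on $|\alpha|+|\beta|$; density of the span of the $S_{\alpha}S_{\beta}^{*}$ then gives $\phi(\cdot)=\tau_n(\cdot)1$. First, $\mathbb{T}$\nobreakdash-equivariance of $\eta^{\mathcal{O}_n}$ and of $\tau_n$ (Proposition~\ref{exist1}) make both sides vanish when $|\alpha|\neq|\beta|$. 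For the base case, I would plug in $\eta^{\mathcal{O}_n}(S_i)=\sum_k j_1(S_k)j_2(u_{ki})$ and $\eta^{\mathcal{O}_n}(S_j)^{*}=\sum_l j_1(S_l^{*})j_2((\overline{u}_{\zeta})_{lj})$ from Proposition~\ref{unitaryaction}, push the $j_2$\nobreakdash-factors past the $j_1$\nobreakdash-factors using Eq.~\eqref{commutation}, and apply $\tau_n\otimes\mathrm{id}\otimes\mathrm{id}$ together with $\tau_n(S_kS_l^{*})=\delta_{kl}n^{-1}$; this yields
\begin{equation*}
\phi(S_iS_j^{*})=\frac{1}{n}\sum_{k=1}^{n}\zeta^{d_k(d_j-d_i)}u_{ki}u_{kj}^{*},
\end{equation*}
and the sum equals $\delta_{ij}$ since it is the $(i,j)$ entry of the relation making $\overline{u}_{\zeta}$ (equivalently $F\overline{u}_{\zeta}F^{-1}$, as $F=I_n$) a unitary, which is one of the defining relations of $\textup{C}(\textup{U}_{\zeta}^{+}(n))$; hence $\phi(S_iS_j^{*})=n^{-1}\delta_{ij}=\tau_n(S_iS_j^{*})1$. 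For the inductive step with $|\alpha|=|\beta|\geq1$, I would write $\alpha=e_i\alpha'$, $\beta=e_j\beta'$, expand $\eta^{\mathcal{O}_n}(S_{\alpha}S_{\beta}^{*})=\eta^{\mathcal{O}_n}(S_i)\,\eta^{\mathcal{O}_n}(S_{\alpha'}S_{\beta'}^{*})\,\eta^{\mathcal{O}_n}(S_j)^{*}$, carry out the same rearrangement, and reduce the $\mathcal{O}_n$\nobreakdash-slice via Lemma~\ref{cuntzkms} in the form $\tau_n(S_kzS_l^{*})=\delta_{kl}n^{-1}\tau_n(z)$; since $\phi(S_{\alpha'}S_{\beta'}^{*})=\tau_n(S_{\alpha'}S_{\beta'}^{*})1$ is a scalar by the inductive hypothesis, it pulls out and the leftover factor collapses by the same $\overline{u}_{\zeta}$\nobreakdash-orthogonality relation, giving $\phi(S_{\alpha}S_{\beta}^{*})=\delta_{ij}n^{-1}\tau_n(S_{\alpha'}S_{\beta'}^{*})1=\tau_n(S_{\alpha}S_{\beta}^{*})1$. (Alternatively, I could compose $\phi$ with an arbitrary state $\omega$ of $\textup{C}(\textup{U}_{\zeta}^{+}(n))$, check that $\omega\circ\phi$ is a $\mathrm{KMS}_{\log n}$ state for the canonical gauge dynamics, and invoke uniqueness of that state; note that both relations of Proposition~\ref{matricial} hold for $q=u$, which is essentially the base-case computation.)

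For faithfulness, I would show that the $*$\nobreakdash-algebra generated by $\{(f\boxtimes_{\zeta}\mathrm{id})\eta^{\mathcal{O}_n}(\mathcal{O}_n)\}$, as $f$ runs over $\mathbb{T}$\nobreakdash-equivariant states of $\mathcal{O}_n$, is dense in $\textup{C}(\textup{U}_{\zeta}^{+}(n))$ — i.e.\ that enough of the generators $u_{ij}$ are recovered by such slices — by feeding in a rich supply of equivariant states. A convenient family: for a unit vector $\xi$ supported on a single level $\{i:d_i=\ell\}$ of $\underline{d}$, the element $T_{\xi}:=\sum_i\xi_iS_i$ is an isometry by \eqref{cuntz} and homogeneous, so $f_{\xi}:=\tau_n(T_{\xi}^{*}\,\cdot\,T_{\xi})$ is a $\mathbb{T}$\nobreakdash-equivariant state. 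Evaluating $(f_{\xi}\boxtimes_{\zeta}\mathrm{id})\eta^{\mathcal{O}_n}$ on $S_aS_b^{*}$ (and, when the level structure forces it, on longer balanced words $S_{\alpha}S_{\beta}^{*}$) and using $\tau_n(S_pS_q^{*})=\delta_{pq}n^{-1}$ produces, up to explicit scalars, the elements $\bigl(\sum_k\overline{\xi_k}\,u_{ka}\bigr)\bigl(\sum_l\xi_l\,u_{lb}^{*}\bigr)$; varying $\xi$ and polarizing puts the products $u_{ka}u_{lb}^{*}$ inside the generated $*$\nobreakdash-algebra, and these, together with the unitarity relations $\sum_a u_{ia}u_{ja}^{*}=\delta_{ij}=\sum_a u_{ai}^{*}u_{aj}$, exhibit $\textup{C}(\textup{U}_{\zeta}^{+}(n))$ as generated. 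I expect this last combinatorial step — reconstructing a generating set from the slices, carried out uniformly in $\zeta$ — to be the main obstacle, and it is where the argument streamlines the one of \cites{JM2018,JM2021}.
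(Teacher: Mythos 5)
Your overall strategy for the $\tau_n$-preservation — induction on path length, the unbalanced case $|\alpha|\neq|\beta|$ killed by equivariance, the base case reduced to unitarity of $\overline{u}_{\zeta}$ — is the same as the paper's, and like the paper you import linearity and the action axioms from Proposition \ref{unitaryaction}. (The paper simply asserts faithfulness as part of Proposition \ref{unitaryaction}, so your separate faithfulness argument is extra relative to the paper's proof; as you yourself note, its key generation step is left open, so it does not yet constitute a proof of that property either.) The genuine gap is in your inductive step. Writing $S_\alpha S_\beta^*=S_i x S_j^*$ with $x=S_{\alpha'}S_{\beta'}^*$, $\eta^{\mathcal{O}_n}(x)=\sum_{(x)}j_1(x_0)j_2(x_1)$ with homogeneous legs, and pushing the $j_2$-legs past the $j_1$-legs, the slice becomes
\begin{equation*}
\frac1n\sum_{(x)}\sum_{k=1}^n\zeta^{(d_i-d_k)\mathrm{deg}(x_0)-d_k(d_i-d_k+\mathrm{deg}(x_1))+d_k(d_j-d_k)}\,\tau_n(x_0)\,u_{ki}\,x_1\,u_{kj}^*.
\end{equation*}
Because the $\zeta$-phase depends on $\mathrm{deg}(x_0)$ and $\mathrm{deg}(x_1)$, and $x_1$ sits between $u_{ki}$ and $u_{kj}^*$, you cannot simply declare that the scalar $\sum_{(x)}\tau_n(x_0)x_1=\tau_n(x)1$ ``pulls out''. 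What rescues the argument is precisely the paper's Step 2a: $\tau_n$ vanishes on off-diagonal words, so only Sweedler components with $\mathrm{deg}(x_0)=0$ survive; for those $\mathrm{deg}(x_1)=\mathrm{deg}(x)$ is fixed, the phase no longer depends on the Sweedler index, and only then does the inductive hypothesis apply, leaving $\frac1n\tau_n(x)\sum_k\zeta^{d_k(d_j-d_i-\mathrm{deg}(x))}u_{ki}u_{kj}^*$.

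Moreover, your stated justification ``the leftover factor collapses by the same $\overline{u}_{\zeta}$-orthogonality relation'' is only correct when $\mathrm{deg}(x)=0$: the unitarity of $\overline{u}_{\zeta}$ gives $\sum_k\zeta^{d_k(d_j-d_i)}u_{ki}u_{kj}^*=\delta_{ij}$, but when $\mathrm{deg}(x)\neq0$ the twisted sum carries the extra exponent $-d_k\,\mathrm{deg}(x)$ and is not $\delta_{ij}$; in that case one must instead observe that $\tau_n(x)=0$ (and likewise $\tau_n(S_\alpha S_\beta^*)=0$), which is exactly the paper's case split into Steps 2b$'$ and 2b$''$. So the missing ingredient is the degree bookkeeping at the one point where the braided computation genuinely differs from the unbraided argument of \cites{JM2018,JM2021}; once you add it, your induction coincides with the paper's proof. (Your parenthetical alternative via uniqueness of the $\mathrm{KMS}_{\log n}$ state would also need a nontrivial verification that $\omega\circ\phi$ satisfies the KMS condition, which involves the same braided commutation issues, so it is not a shortcut as stated.)
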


\begin{proof}
By Proposition \ref{unitaryaction}, we already have that the $\textup{U}_{\zeta}^+(n)$-action is linear and faithful. To finish the proof, we need to show that $\eta^{\mathcal{O}_n}$ preserves $\tau_n$. To that end, we first remark that it suffices to check the equivariance on $S_{\alpha}S^*_{\beta}$, where $\alpha,\beta$ are paths, i.e., 
\begin{equation}\label{toshow}
(\tau_n \boxtimes_{\zeta} \mathrm{id}_{\mathcal{O}_n})\eta^{\mathcal{O}_n}(S_{\alpha}S^*_{\beta})=\tau_n(S_{\alpha}S^*_{\beta})1_{\textup{C}(\textup{U}_{\zeta}^+(n))}.
\end{equation}
Now there are two cases.

\subsection*{Case 1.} $|\alpha|\neq |\beta|$. In this case, the right-hand side of Eq.\eqref{toshow} is $0$. For the left-hand side, we observe that $\eta^{\mathcal{O}_n}(S_{\alpha}S^*_{\beta})=\eta^{\mathcal{O}_n}(S_{\alpha})\eta^{\mathcal{O}_n}(S^*_{\beta})$. Using linearity of $\eta^{\mathcal{O}_n}$ and the commutation relations of the imbeddings $j_1$ and $j_2$, we find a $u_{\alpha \beta \mu \nu} \in \textup{C}(\textup{U}_{\zeta}^+(n))$, such that $\eta^{\mathcal{O}_n}(S_{\alpha}S^*_{\beta})$ is a finite sum of elements of the form $j_1(S_{\mu}S^*_{\nu})j_2(u_{\alpha \beta \mu \nu})$, where $\mu$ and $\nu$ are multi-indices of the form $(i_1,\dots,i_{|\alpha|})$ and $(j_1,\dots,j_{|\beta|})$, respectively. Then $(\tau_n \boxtimes_{\zeta} \mathrm{id}_{\mathcal{O}_n})\eta^{\mathcal{O}_n}(S_{\alpha}S^*_{\beta})$ is a finite sum of elements of the form $\tau_n(S_{\mu}S^*_{\nu})u_{\alpha \beta \mu \nu}$. Since $|\alpha|\neq |\beta|$, $\tau_n(S_{\mu}S^*_{\nu})$ vanishes too, settling this case for good.

\subsection*{Case 2.} $|\alpha|=|\beta|$. For this case, we use induction on $|\alpha|=|\beta|$. Thus let $\textbf{P}(k)$ be the statement ``Eq.\eqref{toshow} holds for paths $\alpha$ and $\beta$ of lengths $|\alpha|=|\beta|=k$''.

\subsection*{Step 1.} We prove that $\textbf{P}(1)$ holds. For this, we observe that length $1$ paths are the generators themselves, i.e., $S_{\alpha}=S_i$ and $S_{\beta}=S_j$ for some $1 \leq i,j \leq n$. Since $u$ and $\overline{u}_{\zeta}$ are unitaries, a repetition of the proof of Proposition \ref{matricial} settles this step.

\subsection*{Step 2.} Now we assume that $\textbf{P}(m)$ holds for all $m < m_0$, i.e., Eq.\eqref{toshow} holds for paths $\alpha$ and $\beta$ of lengths $|\alpha|=|\beta|=m < m_0$. We have to prove that $\textbf{P}(m_0)$ holds. 

\subsection*{Step 2a.} We remark that $\tau_n(S_{\alpha}S^*_{\beta})=0$ if $S_{\alpha}S^*_{\beta}$ has nonzero homogeneous degree, i.e., in the notation of the proof of Proposition \ref{exist1}, $d_1+\dots + d_{|\alpha|}=d_{\alpha} \neq d_{\beta}=d_1+\dots + d_{|\beta|}$. Indeed, $d_{\alpha}\neq d_{\beta}$ implies $\alpha \neq \beta$ and so $\tau_n(S_{\alpha}S^*_{\beta})=\delta_{\alpha\beta}\frac{1}{n^{|\alpha|}}$ vanishes.

\subsection*{Step 2b.} With $\textbf{Step 2a}$ in hand, we take paths $\alpha$ and $\beta$ with $|\alpha|=|\beta|=m_0$. We write $S_{\alpha}S^*_{\beta}$ as $S_ixS^*_j$ with $x$ of the form $S_{\alpha^{\prime}}S^*_{\beta^{\prime}}$ where $\alpha^{\prime}$ and $\beta^{\prime}$ are paths of lengths $m_0-1$. We note that Lemma \ref{cuntzkms} can be applied and so $\tau_n(S_{\alpha}S^*_{\beta})=\tau_n(S_ixS^*_j)=\delta_{ij}\frac{1}{n}\tau_n(x)$. Now the left-hand side of Eq.\eqref{toshow}, when evaluated at $S_ixS^*_j$, reduces to the following.
\allowdisplaybreaks{
\begin{align*}
&(\tau_n \boxtimes_{\zeta} \mathrm{id}_{\mathcal{O}_n})\eta^{\mathcal{O}_n}(S_ixS^*_j)\\
={}&(\tau_n \boxtimes_{\zeta} \mathrm{id}_{\mathcal{O}_n})\eta^{\mathcal{O}_n}(S_i)\eta^{\mathcal{O}_n}(x)\eta^{\mathcal{O}_n}(S^*_j)\\
={}&(\tau_n \boxtimes_{\zeta} \mathrm{id}_{\mathcal{O}_n})(\sum_{(x)}\sum_{k,l=1}^nj_1(S_k)j_2(u_{ki})j_1(x_0)j_2(x_1)j_1(S^*_l)j_2(\zeta^{d_l(d_j-d_l)}u^*_{lj}))\\
={}&(\tau_n \boxtimes_{\zeta} \mathrm{id}_{\mathcal{O}_n})(\sum_{(x)}\sum_{k,l=1}^nj_1(S_kx_0S^*_l)j_2(u_{ki}x_1u^*_{lj}))\\
&\zeta^{d_l(d_j-d_l)+\mathrm{deg}(x_0)(d_i-d_k)-d_l(d_i-d_k+\mathrm{deg}(x_1))}\\
={}&\sum_{(x)}\sum_{k,l=1}^n\tau_n(S_kx_0S^*_l)u_{ki}x_1u^*_{lj}\zeta^{d_l(d_j-d_l)+\mathrm{deg}(x_0)(d_i-d_k)-d_l(d_i-d_k+\mathrm{deg}(x_1))}\\
={}&\sum_{(x)}\sum_{k,l=1}^n\delta_{kl}\frac{1}{n}\tau_n(x_0)u_{ki}x_1u^*_{lj}\zeta^{d_l(d_j-d_l)+\mathrm{deg}(x_0)(d_i-d_k)-d_l(d_i-d_k+\mathrm{deg}(x_1))}\\
={}&\sum_{(x)}\sum_{k=1}^n\frac{1}{n}\tau_n(x_0)u_{ki}x_1u^*_{kj}\zeta^{d_k(d_j-d_k)+\mathrm{deg}(x_0)(d_i-d_k)-d_k(d_i-d_k+\mathrm{deg}(x_1))}\tag{$\ddagger$}
\end{align*}   
}The second equality uses the definition of $\eta^{\mathcal{O}_n}$ (see Proposition \ref{unitaryaction}), Sweedler notation for $\eta^{\mathcal{O}_n}$ and Lemma \ref{linearstar}; the third equality uses commutation relations for the imbeddings $j_1$ and $j_2$; the fifth equality uses Lemma \ref{cuntzkms}. We recall that we have assumed $\textbf{P}(m)$ holds for $m < m_0$; thus $(\tau_n \boxtimes_{\zeta} \mathrm{id}_{\mathcal{O}_n})\eta^{\mathcal{O}_n}(x)=\tau_n(x)1_{\textup{C}(\textup{U}_{\zeta}^+(n))}$, which, in Sweedler notation as used above, is equivalent to $\sum_{(x)}\tau_n(x_0)x_1=\tau_n(x)1_{\textup{C}(\textup{U}_{\zeta}^+(n))}$. We now further divide into two more steps.

\subsection*{Step 2b\texorpdfstring{${}^\prime$}{}} We first assume that $x$ has nonzero homogeneous degree. Then $(\ddagger)$ becomes
\allowdisplaybreaks{
\begin{align*}
&\sum_{(x)}\sum_{k=1}^n\frac{1}{n}\tau_n(x_0)u_{ki}x_1u^*_{kj}\zeta^{d_k(d_j-d_k)+\mathrm{deg}(x_0)(d_i-d_k)-d_k(d_i-d_k+\mathrm{deg}(x_1))}\\
={}&\sum_{(x)}\sum_{k=1}^n\frac{1}{n}u_{ki}\tau_n(x)u^*_{kj}\zeta^{d_k(d_j-d_k)+\mathrm{deg}(x_0)(d_i-d_k)-d_k(d_i-d_k+\mathrm{deg}(x_1))}\\
={}&0,
\end{align*}    
}by $\textbf{Step 2a}$ above. Thus the left-hand side of Eq.\eqref{toshow} is $0$. The right-hand side is \[\tau_n(S_ixS^*_j)1_{\textup{C}(\textup{U}_{\zeta}^+(n))}=\delta_{ij}\frac{1}{n}\tau_n(x)1_{\textup{C}(\textup{U}_{\zeta}^+(n))}=0\] again by $\textbf{Step 2a}$, confirming this case.

\subsection*{Step 2b\texorpdfstring{${}^{\prime\prime}$}{}.} We now assume that $\mathrm{deg}(x)=0$. Now in $(\ddagger)$, we observe that under the sum $\sum_{(x)}$, only those $x_0$ survive for which $\mathrm{deg}(x_0)=0$, by $\textbf{Step 2a}$. But $\mathrm{deg}(x)=\mathrm{deg}(x_0)=0$ imply $\mathrm{deg}(x_1)=0$. Then $(\ddagger)$ reduces to the following.
\allowdisplaybreaks{
\begin{align*}
&\sum_{(x)}\sum_{k=1}^n\frac{1}{n}\tau_n(x_0)u_{ki}x_1u^*_{kj}\zeta^{d_k(d_j-d_k)+\mathrm{deg}(x_0)(d_i-d_k)-d_k(d_i-d_k+\mathrm{deg}(x_1))}\\
={}&\sum_{(x)}\sum_{k=1}^n\frac{1}{n}\tau_n(x_0)u_{ki}x_1u^*_{kj}\zeta^{d_k(d_j-d_k)-d_k(d_i-d_k)}\\
={}&\sum_{k=1}^n\frac{1}{n}\tau_n(x)u_{ki}u^*_{kj}\zeta^{d_k(d_j-d_k)-d_k(d_i-d_k)}\\
={}&\frac{1}{n}\tau_n(x)\sum_{k=1}^nu_{ki}u^*_{kj}\zeta^{d_k(d_j-d_i)}\\
={}&\frac{1}{n}\tau_n(x)\delta_{ij}.
\end{align*}   
}The fourth equality uses the fact that $\overline{u}_{\zeta}$ is a unitary. Therefore, the right-hand side of Eq.\eqref{toshow} equals $\frac{1}{n}\tau_n(x)\delta_{ij}$ which, by Lemma \ref{cuntzkms} is the left-hand side of Eq.\eqref{toshow}. This completes the step, hence the induction, and the proof. 
\end{proof}

\begin{theorem}\label{cuntzsymm}
The pair $(\textup{U}_{\zeta}^+(n), \eta^{\mathcal{O}_n})$ is the terminal object in the category $\mathcal{C}(E_{\mathcal{O}_n}, \tau_n)$, i.e., $(\textup{U}_{\zeta}^+(n), \eta^{\mathcal{O}_n}) \cong (\mathrm{Qaut}(\mathcal{O}_n),\eta^{\mathcal{O}_n})$.
\end{theorem}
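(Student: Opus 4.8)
The plan is to deduce this from Theorem~\ref{graphsymm} together with Propositions~\ref{objectproof} and~\ref{quotient}, the point being that for the Cuntz graph the matrix $\tilde F$ collapses to the identity. Since $S_i^{*}S_j=\delta_{ij}1$ in $\mathcal{O}_n$, we have $\tilde F=(\tau_n(S_i^{*}S_j))_{1\le i,j\le n}=I_n$, so one may take $F=I_n$ throughout, and then $\textup{C}(\textup{U}_{\zeta}^{+}(F^{-1}))=\textup{C}(\textup{U}_{\zeta}^{+}(n))$ and every $F$-conjugation appearing in the proof of Proposition~\ref{quotient} is trivial. By Proposition~\ref{objectproof} the pair $(\textup{U}_{\zeta}^{+}(n),\eta^{\mathcal{O}_n})$ is an object of $\mathcal{C}(E_{\mathcal{O}_n},\tau_n)$, so it remains only to show that it is \emph{terminal} there; terminality, together with Theorem~\ref{graphsymm} and the uniqueness of terminal objects, then yields the asserted isomorphism with $(\mathrm{Qaut}(\mathcal{O}_n),\eta^{\mathcal{O}_n})$.

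To prove terminality, fix an arbitrary object $(G,\eta)\in\mathrm{Obj}(\mathcal{C}(E_{\mathcal{O}_n},\tau_n))$ with $\eta(S_j)=\sum_{i=1}^{n}j_1(S_i)j_2(q_{ij})$. By Proposition~\ref{matricial} and Proposition~\ref{quotient} (applied with $F=I_n$, so that $q'=q$), the matrices $q=(q_{ij})$ and $\overline q_{\zeta}$ are unitaries in $\textup{C}(G)$ — the $\tau_n$-preservation gives $q^{*}q=I_n$ and $\overline q_{\zeta}^{*}\overline q_{\zeta}=I_n$, and passing to the bosonization of $G$, which is an ordinary compact quantum group, supplies the remaining unitarity relations — whence the universal property of $\textup{C}(\textup{U}_{\zeta}^{+}(n))$ furnishes a surjective unital $*$-homomorphism $\phi\colon\textup{C}(\textup{U}_{\zeta}^{+}(n))\to\textup{C}(G)$ determined by $\phi(u_{ij})=q_{ij}$. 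It then remains to check that $\phi$ is a morphism in $\mathcal{C}(E_{\mathcal{O}_n},\tau_n)$: it is $\mathbb{T}$-equivariant because $q_{ij}$ and $u_{ij}$ are both homogeneous of degree $d_j-d_i$ (Lemma~\ref{linearstar}(1)); it intertwines the comultiplications, hence is a Hopf $*$-homomorphism, because $\Delta_G(q_{ij})=\sum_k j_1(q_{ik})j_2(q_{kj})$ matches $\Delta_{\textup{U}_{\zeta}^{+}(n)}(u_{ij})=\sum_k j_1(u_{ik})j_2(u_{kj})$ (Lemma~\ref{linearstar}(2)); and $(\mathrm{id}_{\mathcal{O}_n}\boxtimes_{\zeta}\phi)\circ\eta^{\mathcal{O}_n}=\eta$ follows by evaluating both sides on the generators $S_j$ and using $\phi(u_{ij})=q_{ij}$.

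For uniqueness, note that any morphism $\phi'\colon(G,\eta)\to(\textup{U}_{\zeta}^{+}(n),\eta^{\mathcal{O}_n})$ must, after applying $(\mathrm{id}_{\mathcal{O}_n}\boxtimes_{\zeta}\phi')\circ\eta^{\mathcal{O}_n}=\eta$ to $S_j$ and invoking the linear independence of the $j_1(S_i)$ (as already exploited in the previous section), satisfy $\phi'(u_{ij})=q_{ij}$; since the $u_{ij}$ generate $\textup{C}(\textup{U}_{\zeta}^{+}(n))$ this forces $\phi'=\phi$. This establishes that $(\textup{U}_{\zeta}^{+}(n),\eta^{\mathcal{O}_n})$ is terminal in $\mathcal{C}(E_{\mathcal{O}_n},\tau_n)$, and the claimed isomorphism with $(\mathrm{Qaut}(\mathcal{O}_n),\eta^{\mathcal{O}_n})$ follows from Theorem~\ref{graphsymm}. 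I do not expect a genuine obstacle: the theorem is essentially an assembly of earlier results, and the only delicate points are the observation $\tilde F=I_n$ and the verification that the surjection produced by Proposition~\ref{quotient} respects the $\mathbb{T}$-coaction, the coproduct and the $\mathcal{O}_n$-coaction — all immediate from Lemma~\ref{linearstar}. Equivalently, and even more directly, one may observe that $\textup{C}(\textup{U}_{\zeta}^{+}(n))$ equipped with $\eta^{\mathcal{O}_n}$ is itself an object of the auxiliary category $\mathcal{C}'(E_{\mathcal{O}_n})$ whose canonical comparison map is the identity, so the ideal $\mathcal{I}$ arising in the construction of $G_{E_{\mathcal{O}_n}}$ is $\{0\}$ and hence $G_{E_{\mathcal{O}_n}}=\textup{U}_{\zeta}^{+}(n)$ on the nose.
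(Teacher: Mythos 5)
Your argument is correct and follows essentially the same route as the paper: the key observation $\tilde F=\tau_n(S_i^*S_j)=I_n$ reduces Propositions \ref{matricial}/\ref{quotient} to the statement that for any object $(G,\eta)$ the matrices $q$ and $\overline q_\zeta$ are unitary, so the universal property of $\textup{C}(\textup{U}_{\zeta}^{+}(n))$ produces the unique morphism, while Propositions \ref{unitaryaction} and \ref{objectproof} make $(\textup{U}_{\zeta}^{+}(n),\eta^{\mathcal{O}_n})$ an object, exactly as in the paper's (much terser) proof. Your closing remark that $\textup{U}_{\zeta}^{+}(n)$ is itself an object of $\mathcal{C}'(E_{\mathcal{O}_n})$ with identity comparison map, so that the ideal $\mathcal{I}$ vanishes and $G_{E_{\mathcal{O}_n}}=\textup{U}_{\zeta}^{+}(n)$ on the nose, is a correct and slightly cleaner packaging of the same fact via Theorem \ref{graphsymm}.
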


\begin{proof}
The relations Eq.\eqref{cuntz} imply that the matrix $\tilde{F}=\tau_n(S^*_iS_j)_{1 \leq i,j \leq n}$ defined in the proof of Proposition \ref{matricial} is just the identity matrix $I_n$. The result now follows from Propositions \ref{matricial} and \ref{unitaryaction}.
\end{proof}

\begin{remark}
The bosonization $\textup{U}_{\zeta}^+(n) \rtimes \mathbb{T}$ acts on the $\textup{C}^*$\nobreakdash-algebra $\textup{C}(\mathbb{T}) \boxtimes_{\zeta} \mathcal{O}_n$, which is isomorphic to the crossed product $\mathcal{O}_n \rtimes \mathbb{Z}$ of $\mathcal{O}_n$ with $\mathbb{Z}$, the $\mathbb{Z}$\nobreakdash-action being given by the automorphism induced by $\zeta^{-1}$. In fact, we expect it to be the universal object in a suitable category but as the main focus of this article is the braided quantum group itself, as opposed to its bosonization, we refrain from going further in this direction.
\end{remark}

\begin{bibdiv}
\begin{biblist}

    \bib{anyonic}{article}{
        author={Anshu},
        author={Bhattacharjee, S.},
        author={Rahaman, A.},
        author={Roy, S.},
        title={Anyonic quantum symmetries of finite spaces},
        journal={Lett. Math. Phys.},
        volume={113},
        date={2023},
        number={6},
        pages={Paper No. 116, 30},
        issn={0377-9017},
        doi={10.1007/s11005-023-01736-1},
     }

\bib{banicaunitary}{article}{
author={Banica, T.},
title={Le groupe quantique compact libre ${\rm U}(n)$},
language={French, with English summary},
journal={Comm. Math. Phys.},
volume={190},
date={1997},
number={1},
pages={143--172},
issn={0010-3616},
doi={10.1007/s002200050237},
}
    
\bib{banicagraphs}{article}{
author={Banica, T.},
title={Quantum automorphism groups of homogeneous graphs},
date={2005},
issn={0022-1236},
journal={J. Funct. Anal.},
volume={224},
number={2},
pages={243\ndash 280},
doi={10.1016/j.jfa.2004.11.002},
}
             
\bib{banicametric}{article}{
author={Banica, T.},
title={Quantum automorphism groups of small metric spaces},
date={2005},
issn={0030-8730},
journal={Pacific J. Math.},
volume={219},
number={1},
pages={27\ndash 51},
doi={10.2140/pjm.2005.219.27},
}

\bib{bbg}{article}{
author={Bhattacharjee, S.},
author={Biswas, I.},
author={Goswami, D.},
title={Generalized symmetry in noncommutative (complex) geometry},
journal={J. Geom. Phys.},
volume={166},
date={2021},
pages={Paper No. 104267, 30},
issn={0393-0440},
doi={10.1016/j.geomphys.2021.104267},
}

\bib{bcg2022}{article}{
author={Bhattacharjee, S.},
author={Chirvasitu, A.},
author={Goswami, D.},
title={Quantum Galois groups of subfactors},
journal={Internat. J. Math.},
date={2022},
pages={2250013},
doi={10.1142/S0129167X22500136},
}

\bib{BG2009a}{article}{
author={Bhowmick, J.},
author={Goswami, D.},
title={Quantum group of orientation-preserving {R}iemannian isometries},
date={2009},
issn={0022-1236},
journal={J. Funct. Anal.},
volume={257},
number={8},
pages={2530\ndash 2572},
doi={10.1016/j.jfa.2009.07.006},
}

\bib{banicagoswami}{article}{
author={Banica, T.},
author={Goswami, D.},
title={Quantum isometries and noncommutative spheres},
date={2010},
issn={0010-3616},
journal={Comm. Math. Phys.},
volume={298},
number={2},
pages={343\ndash 356},
doi={10.1007/s00220-010-1060-5},
}

\bib{bg2019}{article}{
author={Bhattacharjee, S.},
author={Goswami, D.},
title={Hopf coactions on odd spheres},
journal={J. Algebra},
volume={539},
date={2019},
pages={305--325},
issn={0021-8693},
doi={10.1016/j.jalgebra.2019.08.012},
}

\bib{bichongraph}{article}{
author={Bichon, J.},
title={Quantum automorphism groups of finite graphs},
date={2003},
issn={0002-9939},
journal={Proc. Amer. Math. Soc.},
volume={131},
number={3},
pages={665\ndash 673},
doi={10.1090/S0002-9939-02-06798-9},
}

\bib{bmrs2019}{article}{
author={Bhowmick, J.},
author={Mandal, A.},
author={Roy, S.},
author={Skalski, A.},
title={Quantum symmetries of the twisted tensor products of $\textup{C}^*$\nobreakdash-algebras},
journal={Comm. Math. Phys.},
volume={368},
date={2019},
number={3},
pages={1051--1085},
issn={0010-3616},
doi={10.1007/s00220-018-3279-5},
}

\bib{br}{book}{
author={Bratteli, O.},
author={Robinson, D.W.},
title={Operator algebras and quantum statistical mechanics. 2},
series={Texts and Monographs in Physics},
edition={2},
note={Equilibrium states. Models in quantum statistical mechanics},
publisher={Springer-Verlag, Berlin},
date={1997},
pages={xiv+519},
isbn={3-540-61443-5},
doi={10.1007/978-3-662-03444-6},
}

\bib{bs2013}{article}{
author={Banica, T.},
author={Skalski, A.},
title={Quantum symmetry groups of $\textup{C}^*$\nobreakdash-algebras equipped with
orthogonal filtrations},
journal={Proc. Lond. Math. Soc. (3)},
volume={106},
date={2013},
number={5},
pages={980--1004},
issn={0024-6115},
doi={10.1112/plms/pds071},
}

\bib{connes}{book}{
author={Connes, A.},
title={Noncommutative geometry},
publisher={Academic Press, Inc., San Diego, CA},
date={1994},
pages={xiv+661},
isbn={0-12-185860-X},
}

\bib{drinfeld}{article}{
author={Drinfel\cprime d, V.G.},
title={Quantum groups},
conference={
title={Proceedings of the International Congress of Mathematicians,
Vol. 1, 2},
address={Berkeley, Calif.},
date={1986},
},
book={
publisher={Amer. Math. Soc., Providence, RI},
},
date={1987},
pages={798--820},
}

\bib{GJ2018}{article}{
author={Goswami, D.},
author={Joardar, S.},
title={Non-existence of faithful isometric action of compact quantum groups on compact, connected {R}iemannian manifolds},
date={2018},
issn={1016-443X},
journal={Geom. Funct. Anal.},
volume={28},
number={1},
pages={146\ndash 178},
doi={10.1007/s00039-018-0437-z},
}

\bib{goswamiadv}{article}{
author={Goswami, D.},
title={Non-existence of genuine (compact) quantum symmetries of compact, connected smooth manifolds},
date={2020},
issn={0001-8708},
journal={Adv. Math.},
volume={369},
pages={107181, 25},
doi={10.1016/j.aim.2020.107181},
}

\bib{HG2021}{article}{
  author={Hossain, Sk.A.},
     author={Goswami, D.},
   title={Quantum symmetry on Potts model},
   journal={J. Math. Phys.},
   volume={63},
   date={2022},
   number={4},
   pages={Paper No. 043504, 14},
   issn={0022-2488},
   doi={10.1063/5.0083709},
 }

 \bib{cocycle-twist}{article}{
   author = {Habbestad, E.},
   author={Neshveyev, S.},
        title = {Cocycle twisting of semidirect products and transmutation},
        journal = {International Mathematics Research Notices},
        date = {2024},
        doi={10.1093/imrn/rnae015}
       }

\bib{JM2018}{article}{
author={Joardar, S.},
author={Mandal, A.},
title={Quantum symmetry of graph $\textup{C}^*$\nobreakdash-algebras associated with connected graphs},
date={2018},
issn={0219-0257},
journal={Infin. Dimens. Anal. Quantum Probab. Relat. Top.},
volume={21},
number={3},
pages={1850019, 18},
doi={10.1142/S0219025718500194},
}
     
\bib{JM2021}{article}{
author={Joardar, S.},
author={Mandal, A.},
title={Quantum symmetry of graph $\textup{C}^*$\nobreakdash-algebras at critical inverse temperature},
journal={Studia Math.},
volume={256},
date={2021},
number={1},
pages={1--20},
issn={0039-3223},
doi={10.4064/sm190102-30-9},
}

\bib{kmrw2016}{article}{
author={Kasprzak, P.},
author={Meyer, R.},
author={Roy, S.},
author={Woronowicz, S.L.},
title={Braided quantum $\textup{SU}(2)$ groups},
journal={J. Noncommut. Geom.},
volume={10},
date={2016},
number={4},
pages={1611--1625},
issn={1661-6952},
doi={10.4171/JNCG/268},
}

\bib{kw2013}{article}{
author={Kajiwara, T.},
author={Watatani, Y.},
title={KMS states on finite-graph $\textup{C}^*$\nobreakdash-algebras},
journal={Kyushu J. Math.},
volume={67},
date={2013},
number={1},
pages={83--104},
issn={1340-6116},
doi={10.2206/kyushujm.67.83},
}
     
\bib{manin}{book}{
author={Manin, Y.I.},
title={Quantum groups and noncommutative geometry},
publisher={Universit\'{e} de Montr\'{e}al, Centre de Recherches Math\'{e}matiques, Montreal, QC},
date={1988},
pages={vi+91},
isbn={2-921120-00-3},
}

\bib{MR2019}{article}{
   author={Meyer, R.},
   author={Roy, S.},
   title={Braided multiplicative unitaries as regular objects},
   conference={
      title={Operator algebras and mathematical physics},
   },
   book={
      series={Adv. Stud. Pure Math.},
      volume={80},
      publisher={Math. Soc. Japan, Tokyo},
   },
   isbn={978-4-86497-079-2},
   date={2019},
   pages={153--178},
}

\bib{MR2021}{article}{
author={Meyer, R.},
author={Roy, S.},
title={Braided Free Orthogonal Quantum Groups},
journal={Int. Math. Res. Not. IMRN},
date={2022},
number={12},
pages={8890--8915},
issn={1073-7928},
doi={10.1093/imrn/rnaa379},
}

\bib{mrw2014}{article}{
author={Meyer, R.},
author={Roy, S.},
author={Woronowicz, S.L.},
title={Quantum group-twisted tensor products of $\textup{C}^*$\nobreakdash-algebras},
journal={Internat. J. Math.},
volume={25},
date={2014},
number={2},
pages={1450019, 37},
issn={0129-167X},
doi={10.1142/S0129167X14500190},
}
    
\bib{mrw2016}{article}{
author={Meyer, R.},
author={Roy, S.},
author={Woronowicz, S.L.},
title={Quantum group-twisted tensor products of $ \textup{C}^*$\nobreakdash-algebras. II},
journal={J. Noncommut. Geom.},
volume={10},
date={2016},
number={3},
pages={859--888},
issn={1661-6952},
doi={10.4171/JNCG/250},
}

\bib{rae2005}{book}{
author={Raeburn, I.},
title={Graph algebras},
series={CBMS Regional Conference Series in Mathematics},
volume={103},
publisher={Published for the Conference Board of the Mathematical
Sciences, Washington, DC; by the American Mathematical Society,
Providence, RI},
date={2005},
pages={vi+113},
isbn={0-8218-3660-9},
doi={10.1090/cbms/103},
}

\bib{R2021}{article}{
author={Roy, S.},
title={Homogeneous quantum symmetries of finite spaces over the circle group},
date={2021},
eprint={https://arxiv.org/abs/2105.01556v2},
}

\bib{R2022}{article}{
author={Roy, S.},
title={Braided quantum groups and their bosonizations in the $\textup{C}^*$\nobreakdash-algebraic framework},
journal={Int. Math. Res. Not. IMRN},
   date={2023},
   number={14},
   pages={11791--11828},
   issn={1073-7928},
   doi={10.1093/imrn/rnac151},
}

\bib{soltan}{article}{
author={So\l tan, P.M.},
title={Podle\'{s} spheres for the braided quantum $\textup{SU}(2)$},
journal={Linear Algebra Appl.},
volume={591},
date={2020},
pages={169--204},
issn={0024-3795},
doi={10.1016/j.laa.2020.01.011},
}

\bib{sw2018}{article}{
author={Schmidt, S.},
author={Weber, M.},
title={Quantum symmetries of graph $\textup{C}^*$\nobreakdash-algebras},
journal={Canad. Math. Bull.},
volume={61},
date={2018},
number={4},
pages={848--864},
issn={0008-4395},
doi={10.4153/CMB-2017-075-4},
}

\bib{tdec}{article}{
author={Thibault de Chanvalon, M.},
title={Quantum symmetry groups of Hilbert modules equipped with
orthogonal filtrations},
journal={J. Funct. Anal.},
volume={266},
date={2014},
number={5},
pages={3208--3235},
issn={0022-1236},
doi={10.1016/j.jfa.2013.10.020},
}

\bib{vaes}{article}{
author={Vaes, S.},
title={A new approach to induction and imprimitivity results},
journal={J. Funct. Anal.},
volume={229},
date={2005},
number={2},
pages={317--374},
issn={0022-1236},
doi={10.1016/j.jfa.2004.11.016},
}

\bib{wang}{article}{
author={Wang, S.},
title={Quantum symmetry groups of finite spaces},
journal={Comm. Math. Phys.},
volume={195},
date={1998},
number={1},
pages={195--211},
issn={0010-3616},
doi={10.1007/s002200050385},
}
     
\bib{woropseudo}{article}{
author={Woronowicz, S.L.},
title={Compact matrix pseudogroups},
journal={Comm. Math. Phys.},
volume={111},
date={1987},
number={4},
pages={613--665},
issn={0010-3616},
}
     
\bib{worosu}{article}{
author={Woronowicz, S.L.},
title={Twisted $\textup{SU}(2)$ group. An example of a noncommutative
differential calculus},
journal={Publ. Res. Inst. Math. Sci.},
volume={23},
date={1987},
number={1},
pages={117--181},
issn={0034-5318},
doi={10.2977/prims/1195176848},
}

\end{biblist}
\end{bibdiv}

\end{document}